\newtheorem{theorem}{Theorem}[section]
\newtheorem{lemma}[theorem]{Lemma}
\newtheorem{proposition}[theorem]{Proposition}
\newtheorem{corollary}[theorem]{Corollary}
\theoremstyle{definition}
\newtheorem{definition}[theorem]{Definition}
\newtheorem{example}[theorem]{Example}
\newtheorem{question}[theorem]{Question}
\newtheorem{conjecture}[theorem]{Conjecture}
\newtheorem{remark}[theorem]{Remark}
\newcommand{\C}{\mathcal{C}}
\newcommand{\ben}{\begin{enumerate}}
\newcommand{\een}{\end{enumerate}}
\begin{document}

\title[Lower central series of associative algebras]{On properties of the lower central series of associative algebras}

\author{Nabilah Abughazalah}
\address{Mathematical Sciences Department
Princess Nourah bint Abdulrahman University
P.O.Box 84428, Riyadh 11671, Saudi Arabia}
\email{nhabughazala@pnu.edu.sa}

\author{Pavel Etingof}
\address{Department of Mathematics, Massachusetts Institute of Technology,
Cambridge, MA 02139, USA} \email{etingof@math.mit.edu}

\begin{abstract}
We give an accessible introduction into the theory of lower central series of associative algebras,
exhibiting the interplay between algebra, geometry and representation theory that is characteristic for this subject, 
and discuss some open questions. In particular, we provide shorter and clearer proofs of the main results 
of this theory. We also discuss some new theoretical and computational results and conjectures 
on the lower central series of the free algebra in two generators modulo a generic homogeneous 
relation.
\end{abstract}

\maketitle

\section{Introduction} 

Let $A$ be an associative algebra. Let $L_1(A)=A$, and $L_{i+1}(A)=[A,L_i(A)]$ 
for $i\ge 1$; so, 
$$
L_1\supset L_2\supset\dots \supset L_m\supset\dots
$$ 
is the {\it lower central series} of $A$ regarded as a Lie algebra. 
In 2006, in the pioneering paper \cite{FS}, Feigin and Shoikhet studied
$L_i$ in the case when $A=A_n$ is the free algebra in $n$ generators over a field of characteristic zero. 
More precisely, they studied the successive quotients $B_i:=L_i/L_{i+1}$, and discovered that they have a rich structure:
starting from $i=2$, they are finite length modules over the Lie algebra $W_n$ of polynomial vector fields 
in $n$ variables. This allows one to say a lot about the structure of $B_i$ (for example, 
compute its Hilbert series in a number of cases, and prove the surprising fact that $B_i$ has polynomial growth for $i\ge 2$). 

Since then, the theory of lower central series of associative algebras was developed in a number of papers, e.g., 
\cite{DKM, EKM, DE, AJ, BoJ, BB, Ke, BJ, BEJKL, JO, CFZ, KL, FX}. In particular, the papers \cite{EKM,Ke,JO,CFZ} 
studied the ideals $M_i=AL_i$ and their quotients $N_i=M_i/M_{i+1}$, and showed that 
they have a similar (and in some ways simpler) structure to $L_i,B_i$. 

The main goal of this paper is to give an accessible introduction into the theory of lower central series of associative algebras,
exhibiting the interplay between algebra, geometry and representation theory that is characteristic for this subject, 
and to discuss some open questions. In particular, we provide shorter and clearer proofs of the main results 
of this theory, and also discuss some new results.  
 
This paper is based on lecture notes by the first author of the lectures delivered by the second author at MIT in the Fall of 2014. 
Many of these results were obtained by undergraduate and high school students and their mentors 
in the MIT research programs UROP, RSI, SPUR, and PRIMES. One of the goals of this paper is to provide 
a gentle entry for students who want to work in this field. 

The organization of the paper is as follows. In Section 2, we discuss preliminaries and give a review of the main results about the lower 
central series. In Sections 3-10 we provide proofs of these results. Finally, in Section 11, we discuss 
the case of algebras with relations and give some new results, computational data, and conjectures. 
\\

 {\bf Acknowledgments.}   The work of P.E. was partially supported by the NSF grant DMS-1000113. 
Both authors gratefully acknowledge the support of Aramco Ibn Khaldun fellowship. The first author wants to thank Princess Nourah bint Abdulrahman University. We are grateful to Rumen Dangovski,
Darij Grinberg, Gus Lonergan and Nate Harman for useful comments. Finally, we are very grateful to Eric Rains for providing MAGMA 
programs for the calculations described in the last section. 

\section{Overview of the main results} 

\subsection{Preliminaries} 

\subsubsection{The lower central series of a Lie algebra} 

Let $R$ be a commutative ring (for example, a field). 

Let $A$ be a Lie algebra over $R$. 
Define a series of Lie ideals in $A$ inductively:
 $$ L_1(A)=A;$$  $$L_{2}(A)=[A, L_1(A)]=[A,A];$$ 
$$L_{3}(A)=[A, L_{2}(A)]=[A,[A,A]];$$ $$\vdots $$ $$L_{i+1}(A)=[A, L_{i}(A)]=\underbrace{[A,[A,[A,[A\dots[A,A]\dots]]]]}_{i +1\ \text{times}}$$ where the bracket $[C,D]$ of two $R$-submodules $C$ and $D$ is defined to be the span of elements $[c,d]$ such that $c\in C$, $d\in D$. This series is called the {\it lower central series} of $A$. We abbreviate $L_i(A)$ as $L_i.$ We have $$A=L_1\supset L_2\supset L_3\supset \dots.$$

We define the successive quotients of the lower central series by $B_i:= L_i/ L_{i+1}$.

Since $A=L_1\supset L_2\supset...$ is a Lie algebra filtration, the direct sum $B:=B_1\oplus B_2\oplus B_3\oplus...$ 
is a graded Lie algebra, i.e. $[B_i,B_j]\subset B_{i+j}$.
Moreover, it is easy to check that the Lie algebra $B$ is generated by $B_1$. 

\subsubsection{The lower central series of an associative algebra}

Now let $A$ be an associative unital algebra over $R$. 
In the future, we will drop the word "unital" and say just "associative algebra" or "algebra". 
Define a bracket operation on $A$ by $[a,b]=a\cdot b-b\cdot a.$ This operation makes $A$ into a Lie algebra over $R$. 
This allows us to define the lower central series $L_i=L_i(A)$ and its successive quotients $B_i$. 

Note that $$B_1=A/[A,A]=HH_0(A)=HC_0(A),$$ (the zeroth Hochschild and cyclic homology of $A$). 
So $B_i$ for $i\geq 2$ may be viewed as some higher analogues of this.

Denote the two-sided ideals generated by each $L_i$ by $M_i$, i.e.  $M_i:=A\cdot L_i\cdot A$. They also form a filtration 
$$
A=M_1\supset M_2\supset M_3\supset\dots.
$$ 
It is easy to check that $M_i=A\cdot L_i$.

The motivation for considering $M_i$ is that $A/M_i$ is the maximal quotient of $A$ which is Lie nilpotent of nilpotency degree $i$, i.e $$[\dots [a_1,a_2],\dots ,a_i]=0\ \  \forall a_1,a_2 \dots,a_i\in A/M_i.$$ This is an important special case of a polynomial identity in an algebra. 

We also define the successive quotients $N_i:=M_i/M_{i+1}$.

For example, $N_1=A_{\operatorname{ab}}$, the abelianization of the algebra $A$, obtained by taking the quotient of $A$ by the relation $[a,b]=0$. 
The $R$-modules $B_i$ and $N_i$ will be the main objects of study in this paper. 

 \begin{example}
1. Let $A=A_n=A_n(R)=R<x_1,x_2,\dots,x_n>$ be the free non-commutative algebra in $n$ generators.
$A_n$ is a free $R$-module, with an $R$-basis formed by all possible monomials, or words, 
in the letters $x_1,...,x_n$. So, we have $$\underset{i\geq1}{\bigcap}L_i=\underset{i\geq1}{\bigcap}M_i=0.$$ Indeed, $L_i$, $M_i$ are graded by length of words and so when we intersect, the minimal possible degree goes to infinity. 
 So, the spaces $B=\oplus_{i\ge 1}B_i$ and $N=\oplus_{i\ge 1}N_i$ can serve as a "graded approximation" to $A$. 
 This will be one of the main examples in this paper. 

2. Let $f_1,...,f_m$ be homogeneous elements in $A_n$. Let 
$$
A=A_n/<f_1,f_2,\dots,f_m>.
$$
 Then $\underset {i\geq1} {\bigcap} L_i= \underset{i\geq1}{\bigcap}M_i=0$
 for the same reason as above.

3. If $A$ is a commutative algebra then $M_i=L_i=N_i=B_i =0$ for $i\geq2$.

4. If $A=\mathbb{C}<x,y>/<yx-xy-1>$ (the Weyl algebra) then $A=[A,A]$ (as $x^iy^j=[y,\frac{x^{i+1}y^j}{i+1}]$), 
so $L_i=A$ and $B_i=N_i=0$ for all $i$.

 Hence, the last two examples will not be interesting for us in this paper. 
 \end{example}
 
 \begin{definition}
 Let $M=\underset{i\geq0}{\bigoplus} M[i]$ be a graded $R-$module such that ${\rm rank}_R(M[i])<\infty$. The Hilbert series of $M$ is defined to be
 $$
 h_M(t)=\sum_{i=0}^\infty {\rm rank}_R(M[i])t^i.
 $$
For instance, if $R$ is a field, we have 
 $$
 h_M(t)=\sum_{i=0}^\infty \dim(M[i])t^i.
 $$
 \end{definition}
 
\begin{example} If $M=A_n$ graded by length of words then ${\rm dim}(M[i])=n^i$, so $h_M(t)=\frac{1}{1-nt}$.  
\end{example}
 
 \subsection{Results on the lower central series of general algebras} 
 
 \begin{theorem}\label{Jenn}(\cite{J})
  If $A$ is a finitely generated algebra then for all $k$ there exists $m$ such that $M_2^m\subset M_k$.
  This $m$ depends only on the number of generators of $A$ and on $k$. 
 \end{theorem}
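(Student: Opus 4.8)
The plan is to work by induction on $k$, reducing everything to a statement about the free algebra $A_n$ by a lifting argument, since any finitely generated algebra on $n$ generators is a quotient of $A_n$ and the formation of $M_i$ commutes with surjections (if $\pi\colon A_n\twoheadrightarrow A$, then $\pi(M_i(A_n))=M_i(A)$). Thus it suffices to prove: for all $k$ there is $m=m(n,k)$ with $M_2(A_n)^m\subset M_k(A_n)$. The base case $k\le 2$ is trivial (take $m=1$). For the inductive step, suppose we already have $M_2^{m'}\subset M_{k-1}$ for a suitable $m'=m(n,k-1)$; I want to produce $m$ with $M_2^m\subset M_k$.

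The key algebraic input is a commutator identity expressing how products of elements of $M_2$ land deeper in the $M$-filtration. The relevant fact (an easy consequence of the Leibniz/Jacobi-type identities for the bracket in an associative algebra, combined with $M_i=A\cdot L_i$) is that $M_i\cdot M_j\subset M_{i+j-1}$ modulo a correction, and more usefully that commutators $[M_i,M_j]\subset M_{i+j}$. Concretely, I would first establish $[M_i,M_j]\subseteq M_{i+j}$: writing generators of $M_i$ as $a\ell$ with $\ell\in L_i$ and of $M_j$ as $b\ell'$ with $\ell'\in L_j$, expand $[a\ell,b\ell']$ using $[xy,z]=x[y,z]+[x,z]y$ repeatedly to move brackets onto $\ell,\ell'$; every resulting term contains either $[\ell,\ell']\in L_{i+j}$ or nested brackets $[\cdot,\ell]$ or $[\cdot,\ell']$ of total depth $\ge i+j$, hence lies in $A\cdot L_{i+j}=M_{i+j}$. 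Granting this, one gets that $M_2^2=M_2\cdot M_2$ is, modulo $M_3$, contained in the image of the multiplication map on $M_2/M_3\otimes M_2/M_3$, i.e. $N_2\cdot N_2$; and since $[M_2,M_2]\subset M_4$, the induced product on $N:=\oplus N_i$ is graded-commutative in low degree, so that $M_2^m$ modulo $M_{m+1}$ behaves like a degree-$m$ piece. Iterating, one shows $M_2^m\subseteq M_{k-1}$ once $m\ge m(n,k-1)$, and then the final push from $M_{k-1}$ to $M_k$ uses that $M_2\cdot M_{k-1}\subset M_k + (\text{something in }M_2^{m'+1})$, which lets one bump $m$ by a controlled amount depending only on $n$ and $k$.

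Making the bookkeeping uniform in $n$ is the delicate point: one has to check that at each stage the required increment of $m$ depends only on the number of generators and not on the particular algebra, which is automatic once everything is phrased inside $A_n$, but one must be careful that the commutator expansions do not introduce dependence on word length. A clean way to handle this is to note that $A_n/M_k$ is a finitely generated algebra satisfying the polynomial identity of Lie nilpotency of class $k$, hence (by a theorem on PI-algebras, or directly) $A_n/M_k$ is "close to commutative" in a quantitative sense — for instance, its augmentation ideal is nilpotent modulo its commutator ideal with a nilpotency degree bounded in terms of $n$ and $k$ — and $M_2/M_k$ maps to that commutator ideal.

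The step I expect to be the main obstacle is obtaining the explicit, $n$-uniform bound on $m$ rather than merely its existence: the naive recursion from the commutator identities gives $m(n,k)$ growing very fast, and verifying that no hidden dependence on the degree of elements creeps in requires care in organizing the multilinear expansion $[a_1\cdots a_r,\ell]=\sum a_1\cdots[a_i,\ell]\cdots a_r$ and tracking that each bracket genuinely increases filtration depth. Everything else — the reduction to $A_n$, the base case, and the identity $[M_i,M_j]\subset M_{i+j}$ — is routine.
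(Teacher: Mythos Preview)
Your proposal has a genuine gap at the central step, not merely in the bookkeeping.

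First, the identity $[M_i,M_j]\subset M_{i+j}$ is not established by your expansion. When you write $[a\ell,b\ell']$ with $\ell\in L_i$, $\ell'\in L_j$ and apply the Leibniz rule, you produce terms such as $a[\ell,b]\ell'$ and $[a,b]\ell'\ell$. The first lies in $A\cdot L_{i+1}\cdot L_j$, which the Gupta--Levin theorem places only in $M_{i+j-1}$, not $M_{i+j}$; the second lies in $L_2\cdot L_j\cdot L_i$, which fares no better. Your description ``every resulting term contains \dots\ nested brackets of total depth $\ge i+j$'' is simply not what the expansion gives.

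More importantly, even if $[M_i,M_j]\subset M_{i+j}$ held, your induction never gets off the ground. The essential difficulty is to show that $M_2^m\subset M_3$ for some $m$ depending only on $n$: once you have that, Gupta--Levin ($M_3M_i\subset M_{i+1}$) immediately bootstraps to $M_2^{(k-2)m}\subset M_k$. But nothing in your argument touches this base case. Knowing that the induced product on $N=\bigoplus N_i$ is graded, or even graded-commutative, does not make $N_2^m$ vanish in $N_2$; the image of $M_2^m$ in $A/M_3$ sits in $N_2$, not in $N_m$, so ``behaving like a degree-$m$ piece'' is exactly what must be \emph{proved}, not assumed.

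The paper supplies precisely this missing ingredient: one shows directly (or via the Feigin--Shoikhet isomorphism $A_n/M_3\cong\Omega_\ast^{\rm even}$) that in $A_n/M_3$ the elements $u_{ij}=[x_i,x_j]$ are central and satisfy $u_{ij}u_{kl}+u_{kj}u_{il}=0$, coming from the identity $[x,y][z,t]+[x,z][y,t]\equiv 0\pmod{M_3}$. Thus products of the $u_{ij}$ behave like an exterior algebra on $n$ letters, so $(M_2/M_3)^r=0$ once $2r>n$. That is the key idea; the induction you propose cannot substitute for it.
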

 
 A proof of Theorem \ref{Jenn} over $\Bbb C$ will be given in Section 5, see Corollary \ref{Jenn1}
 (and essentially the same proof goes through over any base ring).  
 
 \begin{theorem}(\cite{GL})
 \label{thmguptalevin}
  $M_i\cdot M_j\subset M_{i+j-2},$ for every $i,j\geq 2$.
   \end{theorem}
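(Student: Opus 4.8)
The plan is to peel the statement down to an inclusion between the Lie-power subspaces and then run an induction whose inductive step is a Leibniz computation. First, recall from the excerpt that $M_k=AL_k$; the same one-line check (from $L_{k+1}\subseteq L_k$ one gets $a\ell=\ell a+[a,\ell]\in L_kA$ for $\ell\in L_k$, and symmetrically) also gives $M_k=L_kA$. Hence $M_iM_j=AL_i\cdot L_jA=A(L_iL_j)A$, and since $AL_{i+j-2}A=M_{i+j-2}$ is a two-sided ideal it is enough to prove
$$L_iL_j\subseteq M_{i+j-2}\qquad(i,j\ge 2).$$
When $\min(i,j)=2$, say $i=2$, this is immediate: $L_2L_j\subseteq AL_j=M_j=M_{i+j-2}$. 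So from now on $i,j\ge 3$.

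I would induct on the left index $i$ (the statement for a given $i$ being ``$L_iL_j\subseteq M_{i+j-2}$ for all $j\ge 2$''), the base $i=2$ being the case just done. For $i\ge 3$, write a typical generator of $L_i$ as $u=[a,w]$ with $a\in A$, $w\in L_{i-1}$, take $v\in L_j$, and use that $[a,-]$ is a derivation of $A$:
$$uv=[a,w]\,v=[a,wv]-w[a,v].$$
Here $[a,v]\in L_{j+1}$, so $w[a,v]\in L_{i-1}L_{j+1}\subseteq M_{(i-1)+(j+1)-2}=M_{i+j-2}$ by the inductive hypothesis for the smaller left index $i-1$; and $wv\in L_{i-1}L_j\subseteq M_{i+j-3}$, again by that hypothesis.

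The remaining point---and the one I expect to be the real obstacle---is to upgrade $[a,wv]$ from $M_{i+j-3}$ (all that $wv\in M_{i+j-3}$ yields, since an ideal is only stable, not improved, under bracketing) to $M_{i+j-2}$. A second Leibniz step is useless, since $[a,wv]=[a,w]v+w[a,v]$ merely rebuilds $uv$. Instead one works modulo $M_{i+j-2}$, where every iterated commutator of weight $\ge i+j-2$ vanishes, so that sufficiently deep commutators are central modulo $M_{i+j-2}$; one can then peel the inner brackets off $w$ one at a time, each time commuting a free factor past a deep commutator at the cost of an error lying in $M_{i+j-2}$. This produces ``exchange'' relations: for instance in the first nontrivial case $i=j=3$, writing $\langle p,q,r\rangle:=[[p,q],r]$, the argument yields
$$\langle a,b,c\rangle\langle d,e,f\rangle\equiv-\langle d,e,c\rangle\langle a,b,f\rangle\pmod{M_4}$$
and likewise with the last entries exchanged; specializing entries (e.g.\ $(d,e)=(a,b)$) and combining these relations with the Jacobi identity then forces $L_3L_3\subseteq M_4$, and the general $(i,j)$ is handled by the same mechanism with longer commutators. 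Keeping this unwinding organized so that the induction never becomes circular is essentially the Gupta--Levin argument, and is where the work lies.

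Finally, if one prefers to quarantine the combinatorics, note that the inclusion $M_iM_j\subseteq M_{i+j-2}$ is preserved under algebra surjections and under passing to a finitely generated subalgebra, so it suffices to treat $A=A_n$; there one can instead read it off from the known structure of $N_k=M_k/M_{k+1}$ as a module over $(A_n)_{\operatorname{ab}}$ (equivalently over $W_n$).
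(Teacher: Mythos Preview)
Your reduction to the claim $L_iL_j\subseteq M_{i+j-2}$ is correct and is exactly what the paper does. The base case $\min(i,j)=2$ is also fine. The gap is in the inductive step. From $uv=[a,wv]-w[a,v]$ you correctly dispose of $w[a,v]$ by induction, but for $[a,wv]$ all that the inductive hypothesis yields is $wv\in M_{i+j-3}$, and bracketing with $a$ does not raise the index: $[A,M_k]\subseteq M_k$, not $M_{k+1}$. You acknowledge this and propose to ``peel off'' brackets using exchange relations modulo $M_{i+j-2}$, but this part is not actually carried out; the sample relation for $i=j=3$ is asserted, not derived, and it is not shown how such relations combine with Jacobi to kill all of $L_3L_3$ modulo $M_4$, let alone how the mechanism scales to general $(i,j)$ without the induction becoming circular. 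As written, the proposal stops precisely at the point where the content of the Gupta--Levin argument begins.

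The alternative you offer at the end---reading the inclusion off from the $W_n$-module structure of $N_k$ for $A=A_n$---is circular in this paper's development: the $W_n$-action on $N_i$ (Theorem~\ref{wnact}(2)) is established in Section~6 using $M_3L_i\subseteq M_{i+1}$, i.e.\ using Theorem~\ref{thmguptalevin} itself.

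For comparison, the paper avoids the problematic term $[a,wv]$ altogether. Instead of a one-step Leibniz, it introduces the auxiliary element
\[
T=[b_1,\dots,b_{k-2},r,a_j[b_{k-1},b_k]],\qquad r=[a_1,\dots,a_{j-1}],
\]
and expands it by the full iterated Leibniz rule (Lemma~\ref{lemmaleibniz}). One summand is exactly the product $[r,a_j][b_1,\dots,b_k]$ you want; every other summand is a product of two iterated commutators whose inner indices are strictly smaller than $k$ (or than $k$ on the opposite side), hence lies in $M_{j+k-2}$ by the inductive hypothesis. On the other hand $T$ itself lies in $L_{j+k-2}$, since one can move $r$ to the innermost slot. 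This is the missing idea: rather than trying to upgrade $[a,wv]$, one arranges a single long iterated commutator whose Leibniz expansion isolates the target product from terms already covered by induction.
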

   
A proof of Theorem \ref{thmguptalevin} is given in Section 4. 
 
 \begin{definition}
 Let $I$ be an ideal of an algebra $A$. We say that $I$ is:
 \begin{enumerate}
 \item  nil if for all $ x \in I$ there exists $N$ such that $x^N=0$.
 \item nilpotent if there exists $N$ such that $ I ^N=0.$
 \end{enumerate} 
 \end{definition}
 
 \begin{corollary} If $A$ is Lie nilpotent then 
 $M_2$ is a nil ideal. Moreover, $M_2$ is nilpotent if $A$ is also finitely generated.
 \end{corollary}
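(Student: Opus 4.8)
The plan is to deduce both statements from Theorem~\ref{Jenn}, using a reduction to finitely generated subalgebras for the nil part.

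First, unwind the hypothesis: ``$A$ is Lie nilpotent'' means that the identity $[\dots[a_1,a_2],\dots,a_k]=0$ holds identically for some $k$, i.e.\ $L_k(A)=0$; since $M_i=A\cdot L_i$, this is the same as $M_k(A)=0$. If $A$ is in addition finitely generated, Theorem~\ref{Jenn} applied with this $k$ produces an $m$ with $M_2^m\subseteq M_k=0$, so $M_2$ is nilpotent. This settles the ``moreover'' part.

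For the general nil statement I would localize. Let $x\in M_2(A)=A\cdot L_2(A)$ and write it as a finite sum $x=\sum_i a_i[b_i,c_i]$. Let $A'\subseteq A$ be the unital subalgebra generated by the finitely many elements $a_i,b_i,c_i$ that occur; then $A'$ is finitely generated and $x\in A'\cdot L_2(A')=M_2(A')$. Moreover $A'$ is again Lie nilpotent, since the defining identity $L_k=0$ is inherited by subalgebras: $L_k(A')\subseteq L_k(A)=0$. By the finitely generated case just proved, $M_2(A')$ is nilpotent, hence $x$ is nilpotent; as $x\in M_2(A)$ was arbitrary, $M_2(A)$ is nil.

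I do not expect a genuine obstacle: all the real work is hidden inside Theorem~\ref{Jenn}. The only points needing a word of care are the equivalence ``Lie nilpotent $\Longleftrightarrow$ $M_k=0$ for some $k$'' (which uses $L_k\subseteq M_k=A\cdot L_k$ in both directions), the inheritance of Lie nilpotency by subalgebras, and the routine fact that every element of $M_2(A)$ already lies in $M_2(A')$ for a suitable finitely generated $A'$ --- where it is convenient to invoke the identity $M_2=A\cdot L_2$ so that $M_2$ is spanned by the simple elements $a[b,c]$.
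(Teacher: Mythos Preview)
Your proof is correct and matches the paper's argument essentially line for line: both reduce to Theorem~\ref{Jenn} by passing, for a given $x\in M_2$, to a finitely generated subalgebra $A'$ with $x\in M_2(A')$ and using $M_k(A')\subseteq M_k(A)=0$. You are merely a bit more explicit than the paper in constructing $A'$ and in recording the equivalence $L_k=0\Leftrightarrow M_k=0$.
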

 
 \begin{proof}
Assume that $A$ has nilpotency degree $k$, i.e. $M_k=0$. Let $x\in M_2$. 
Then $x\in M_2(A')$ for some finitely generated subalgebra $A'\subset A$. 
By Theorem \ref{Jenn}, there exists $m$ such that $M_2(A')^m\subset M_k(A')$. 
But $M_k(A')\subset M_k(A)=0$. Hence $x^m=0$, and $M_2$ is nil. If $A$ is in addition finitely generated, then 
by Theorem \ref{Jenn} there is an $m$ such that $M_2^m\subset M_k=0$, so $M_2$ is nilpotent.   
 \end{proof} 
 
\begin{theorem}[\cite{BJ}]
\label{thmBapat}
 If $i$ or $j$ is odd and $\frac {1}{6 } \in R$ then $M_i M_j\subset M_{i+j-1}.$
\end{theorem}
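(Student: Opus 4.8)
The plan is to reduce the statement about two-sided ideals to one about the Lie products $L_iL_j$, to observe that after this reduction only a single ``central'' obstruction remains, and to kill that obstruction by feeding the parity hypothesis into a polarization identity whose coefficients are precisely what forces $\frac16\in R$.

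First I would reduce $M_iM_j\subseteq M_{i+j-1}$ to $L_iL_j\subseteq M_{i+j-1}$. We may assume $i,j\ge 2$, since otherwise $M_iM_j=M_{\max(i,j)}=M_{i+j-1}$ trivially. As $M_k=AL_k$ is a two-sided ideal, $M_iM_j=AL_iAL_j$; pushing the inner copy of $A$ through $L_i$ using $\ell a-a\ell=[\ell,a]\in[L_i,A]=L_{i+1}$ gives
\[
M_iM_j\subseteq AL_iL_j+AL_{i+1}L_j\subseteq AL_iL_j+M_{i+1}M_j,
\]
and $M_{i+1}M_j\subseteq M_{(i+1)+j-2}=M_{i+j-1}$ by Theorem~\ref{thmguptalevin}. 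Hence $M_iM_j\subseteq M_{i+j-1}$ is equivalent to $L_iL_j\subseteq M_{i+j-1}$. Since $[L_i,L_j]\subseteq L_{i+j}\subseteq M_{i+j}\subseteq M_{i+j-1}$ and $\frac12\in R$, this is in turn equivalent to $\{L_i,L_j\}:=L_iL_j+L_jL_i\subseteq M_{i+j-1}$, which is visibly symmetric in $i$ and $j$; so we may take $i$ to be odd.

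It is worth isolating why gaining even one degree is delicate. Distributing a commutator over a product always raises the index sum: $[x,\ell_i\ell_j]=[x,\ell_i]\ell_j+\ell_i[x,\ell_j]\in L_{i+1}L_j+L_iL_{j+1}$, and both summands lie in $M_{i+j-1}$ by Theorem~\ref{thmguptalevin}. Thus $[A,L_iL_j]\subseteq M_{i+j-1}$ with no hypothesis at all: the image of $L_iL_j$ in $\bar A:=A/M_{i+j-1}$ is central. So the whole content of the theorem is that this central subspace vanishes. Unwinding the obvious commutator manipulations — writing $a\in L_i$ as a sum of brackets $[x,a_0]$, $a_0\in L_{i-1}$, and using $\{[x,a_0],b\}=[x,\{a_0,b\}]-\{a_0,[x,b]\}$ — only relates $\{L_i,L_j\}$ to $\{L_{i-1},L_{j+1}\}$ and to terms that fold back into $\{L_i,L_j\}$ itself, with the index sum never dropping (indeed, when $i$ and $j$ are both odd the identity even leads outside the scope of the theorem). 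A bare induction on $i+j$ therefore cannot close; some new, parity-sensitive input is needed.

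Providing that input is the main obstacle, and is essentially the content of \cite{BJ}. I expect it to rest on a polarization identity exploiting that, when $i$ is odd, a product of two elements of $L_i$ exhibits extra cancellation: polarizing the quadratic map $a\mapsto a^2$ introduces a factor $2$, and polarizing the cubic map $a\mapsto a^3$ (via $(a+b)^3-a^3-b^3\equiv 3(a^2b+ab^2)$ modulo the commutator ideal) introduces a factor $3$. Combining these with the symmetrization already performed, and running the result back through the commutator identity above, should rewrite $\{\ell_i,\ell_j\}$ modulo $M_{i+j-1}$ as a combination of products to which Theorem~\ref{thmguptalevin} applies after a genuine drop in index sum, so that the induction closes — and it is exactly the coefficients $2$ and $3$ produced by these polarizations that force $\frac16\in R$. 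All of the difficulty is concentrated in this step; the reductions in the first two paragraphs are routine.
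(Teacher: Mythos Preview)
Your reductions in the first two paragraphs are correct, and your diagnosis in the second paragraph --- that the obstruction is a central class in $A/M_{i+j-1}$ and that naive commutator shuffling only moves indices around without lowering their sum --- is accurate. But the third paragraph is not a proof: you never produce the promised polarization identity, nor explain concretely how it would force a genuine drop in index sum so that Theorem~\ref{thmguptalevin} finishes the job. You yourself write ``all of the difficulty is concentrated in this step''; that difficulty is not addressed. The remarks about polarizing $a\mapsto a^2$ and $a\mapsto a^3$ are suggestive of why $2$ and $3$ should appear, but they do not amount to an argument.

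The paper's route is also different from what you sketch. Rather than attacking $\{L_i,L_j\}$ head-on, it first proves an auxiliary \emph{commutator} statement: $[M_j,L_k]\subseteq L_{k+j}$ for every odd $j$ (Proposition~\ref{proposition}). The base case $j=3$ is Lemma~\ref{lemmaBapat1}, $[M_3,A]\subseteq L_4$, established by showing that $[x\ast[y,z,u],v]$ is antisymmetric modulo $L_4$ under the group $G=S_3\{x,y,v\}\times S_2\{z,u\}$, and then checking a single five-variable identity expressing $\mathrm{Alt}_G[x\ast[y,z,u],v]$ as $\mathrm{Alt}_G$ of an explicit element of $L_4$; inverting the antisymmetrization is where $\tfrac16\in R$ is used. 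The induction step from $j-2$ to $j$ writes $m\in M_j$ as $a[b,c,d]$ with $d\in L_{j-2}$, applies Lemma~\ref{lemmaBapat2} to land in $L_{k+3}$ of the free algebra on $a,b,c,d,l_1,\dots,l_k$, and then specializes $d$ back to an iterated bracket via Corollary~\ref{multilin1}. Once $[M_j,L_k]\subseteq L_{k+j}$ is known for odd $j$, the product statement drops out in one line from $[x,a]y=[x,ay]-a[x,y]$ with $x\in L_{j-1}$, $y\in L_k$ --- a reduction close in spirit to your first paragraph, but invoked only after the hard work is already done.
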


Theorem \ref{thmBapat} is proved in Section 3. 

\begin{conjecture}
If $i,j\geq 2$ are both even then in general $M_i \cdot M_j \nsubseteq M_{i+j-1}$.
\end{conjecture}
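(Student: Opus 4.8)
The plan is to prove the conjecture by exhibiting, for suitable even $i,j$, an explicit algebra in which $M_iM_j\nsubseteq M_{i+j-1}$; the most basic and most transparent case is $i=j=2$, which I would settle completely inside the free algebra $A_4$. By Theorem~\ref{thmguptalevin} we already have $M_2M_2\subseteq M_2$, so what must be shown is that the image of $M_2M_2$ in $N_2=M_2/M_3$ is nonzero. The input I would use is the Feigin--Shoikhet theorem \cite{FS}: the assignment $x_i\mapsto x_i$ extends to an algebra isomorphism $A_n/M_3\cong\Omega^{\mathrm{ev}}$, where $\Omega^\bullet=\Omega^\bullet(\CC[x_1,\dots,x_n])$ is the algebra of polynomial differential forms, $\Omega^{\mathrm{ev}}=\Omega^0\oplus\Omega^2\oplus\Omega^4\oplus\cdots$, and the multiplication on the target is the Fedosov product $\alpha\circ\beta=\alpha\wedge\beta\pm d\alpha\wedge d\beta$.

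Concretely, I would first note that $[x_i,x_j]=x_i\circ x_j-x_j\circ x_i$ maps to a nonzero scalar multiple of $dx_i\wedge dx_j$ under this isomorphism. Since $[x_i,x_j]\in L_2\subseteq M_2$, the elements $a=[x_1,x_2]$ and $b=[x_3,x_4]$ lie in $M_2(A_4)$, and $ab$ maps to a nonzero multiple of $(dx_1\wedge dx_2)\circ(dx_3\wedge dx_4)=dx_1\wedge dx_2\wedge dx_3\wedge dx_4$, the Fedosov correction term vanishing because $dx_1\wedge dx_2$ is closed. This is the generating top form of $\Omega^4$ in four variables, hence nonzero --- this is exactly where $n=4$ is used. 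Therefore $[x_1,x_2][x_3,x_4]\notin M_3(A_4)$ and $M_2M_2\nsubseteq M_3$. I would add as a remark that the same computation gives $M_2M_2\subseteq M_3$ in $A_2$ and $A_3$, since there $\Omega^4=0$ and the Fedosov product of any two $2$-forms vanishes; so at least four generators are genuinely required.

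For a general pair of even integers $i,j\geq 2$ I would try the analogous construction in $A_{i+j}$: take left-normed commutators $u=[x_1,[x_2,\dots[x_{i-1},x_i]\dots]]\in L_i$ and $v=[x_{i+1},[x_{i+2},\dots[x_{i+j-1},x_{i+j}]\dots]]\in L_j$ in disjoint variables and try to show $uv\notin M_{i+j-1}$. By Theorem~\ref{thmguptalevin} one has $uv\in M_{i+j-2}$, so the task is to detect a nonzero class of $uv$ in $N_{i+j-2}(A_{i+j})$. This is where the real obstacle lies: there is no Feigin--Shoikhet-type closed description of $A_n/M_k$ for $k\geq 4$, so the clean argument above does not extend. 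The realistic routes are (a) to use the known, and partly conjectural, structure of the $W_n$-modules $N_k$ for free algebras --- their Hilbert series and socle/radical filtrations, as developed in \cite{FS,EKM,Ke,JO,CFZ} --- to locate a ``top-form'' constituent of $N_{i+j-2}$ that $uv$ must hit; or (b) to verify $uv\notin M_{i+j-1}$ by a finite degree-bounded computation, e.g.\ with MAGMA routines of the kind used in Section~11. I expect (b) to dispatch small cases cleanly, while (a) --- that is, controlling $M_{i+j-1}$ in the free algebra uniformly in $i+j$ --- is the missing ingredient and the reason the full statement is still only a conjecture.
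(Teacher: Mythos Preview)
The statement you are addressing is recorded in the paper as a \emph{Conjecture}, not a theorem; the paper offers no proof, so there is nothing to compare your argument against. What you have actually done is settle the base case $i=j=2$: your computation is correct. Under the Feigin--Shoikhet isomorphism $\phi\colon A_4/M_3\to\Omega_\ast^{\rm even}$ of Theorem~\ref{fsthm}(1), the element $[x_1,x_2][x_3,x_4]\in M_2M_2$ is sent to $(dx_1\wedge dx_2)\ast(dx_3\wedge dx_4)=dx_1\wedge dx_2\wedge dx_3\wedge dx_4\neq 0$ (the $d\alpha\wedge d\beta$ correction vanishes since $dx_1\wedge dx_2$ is closed), so $[x_1,x_2][x_3,x_4]\notin M_3(A_4)$ and hence $M_2M_2\nsubseteq M_3$. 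Your side remark that $M_2M_2\subseteq M_3$ in $A_2$ and $A_3$ is also correct, for the reason you give.

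Whether this already proves the conjecture depends on how one reads ``in general''. If it is meant only to assert that the containment $M_iM_j\subseteq M_{i+j-1}$ of Theorem~\ref{thmBapat} can fail when both indices are even, then your $i=j=2$ example suffices and the conjecture is proved. The more natural reading, in context, is the stronger claim that the containment fails for \emph{every} even pair $(i,j)$ in a suitable free algebra; this is what you attempt in your second paragraph, and you are right that the obstacle is the absence of a Feigin--Shoikhet description of $A_n/M_k$ for $k\ge 4$. Your honest assessment---that the general case requires either structural information about $N_{i+j-2}$ as a $W_n$-module or case-by-case computation, and that this is why the statement remains conjectural---is exactly the state of affairs. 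So: you have a clean proof of the smallest instance, and a correct diagnosis of why the rest is open.
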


One of the important corollaries of Theorem \ref{thmBapat} is the following theorem. 
Suppose $A$ is generated by $x_1,...,x_n$, and let $A_{\le 2}$ be the span of elements 
$1,x_i,x_ix_j$. 

\begin{theorem} \label{thmBapat1} (\cite{AJ}) 
Let $A$ be an algebra over a field of characteristic zero. For $m\ge 2$ we have 
$$
B_{m+1}=[A_{\le 2},B_m]+\sum_{i,j,k\text{ distinct}}[x_i[x_j,x_k],B_m].
$$
\end{theorem}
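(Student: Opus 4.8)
The plan is to show that the right-hand side, call it $V_m$, already contains all of $L_{m+1}$ modulo $L_{m+2}$, by induction on $m$ together with an induction "inside" that reduces an arbitrary iterated bracket $[a_1,[a_2,\dots]]\in L_{m+1}$ to the claimed generators. The containment $V_m\subset B_{m+1}$ is clear since $[A_{\le 2},L_m]\subset L_{m+1}$ and $[x_i[x_j,x_k],L_m]\subset [L_2,L_m]\subset L_{m+2}\subset L_{m+1}$ by the Jacobi/Leibniz behaviour of the lower central series; actually we only need that each displayed summand lands in $L_{m+1}$. The real content is the reverse inclusion $B_{m+1}\subset V_m$.

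First I would reduce to showing that for any $a\in A$ and any $w\in L_m$, the bracket $[a,w]$ lies in $V_m$ modulo $L_{m+2}$. Write $a$ as a linear combination of monomials $x_{i_1}\cdots x_{i_r}$; using the derivation identity $[bc,w]=[b,[c,w]]+[c,[b,w]]+\text{(rearrangement)}$ — more precisely the standard fact that $[bc,w]\equiv b[c,w]+[b,w]c \pmod{\text{higher }L}$ is \emph{not} quite what we want; instead use $[bc,w]=b[c,w]+[b,w]c$ exactly, and then push the outer multiplications by $b,c$ back inside brackets at the cost of terms in $[M_2,L_m]$. The key algebraic input here is Theorem \ref{thmBapat} with, say, $i=2$, $j=m$: since $m\ge 2$ and $2$ is even we need $j=m$ odd, so this handles odd $m$ directly, giving $M_2M_m\subset M_{m+1}$ and hence $[M_2,L_m]\subset M_{m+1}\cap(\text{brackets})$, which one shows is in $L_{m+2}$; for even $m$ one instead feeds the \emph{previous} case $m-1$ (odd) through the Lie bracket $[B_1,B_{m-1}]\subset B_m$ plus Theorem \ref{thmBapat} to control the error terms. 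This is the step I expect to be the main obstacle: organizing the induction so that the "error" produced when commuting a length-$\ge 2$ monomial past $L_m$ is always either already of the form $[x_i[x_j,x_k],B_m]$ (the distinct-indices term), or absorbable into $[A_{\le 2},B_m]$, or lands in $L_{m+2}=0$ in $B_{m+1}$.

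Concretely, the inner reduction goes by induction on the length $r$ of the monomial $a=x_{i_1}\cdots x_{i_r}$. For $r\le 2$, $[a,w]\in[A_{\le 2},L_m]$ and we are done. For $r\ge 3$, split $a=bc$ with $b$ of length $1$ (so $b=x_i$) and $c$ of length $r-1$: then $[a,w]=x_i[c,w]+[x_i,w]c$. Now $[c,w]\in L_{m+1}$ and by the outer induction (on $m$, applied after noting $L_{m+1}/L_{m+2}=V_{m}$... actually we must be careful and instead re-expand $[c,w]$ itself), and $[x_i,w]\in L_{m+1}$, so $[x_i,w]c\in L_{m+1}\cdot A$; the point is that $L_{m+1}\cdot A\subset L_{m+1}+M_{m+2}$ via $uc=cu+[u,c]$ and $[u,c]\in[L_{m+1},A]=L_{m+2}$. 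So $[x_i,w]c\equiv c[x_i,w]\pmod{L_{m+2}}$, and $c[x_i,w]$ is a length-$(r-1)$ monomial times an element of $L_{m+1}$, which we rewrite as a bracket against $L_{m+1}\supset$... — here we must instead think of $c[x_i,w]$ as $\frac12[c,[x_i,w]]+\frac12(c[x_i,w]+[x_i,w]c)$ and iterate. The bookkeeping that makes all residual terms collapse into the two stated families — using that a bracket $[x_ix_j,-]$ with $i=j$ or with a repeated pattern reduces by Theorem \ref{thmBapat} and the Jacobi identity to $[x_k[x_i,x_j],-]$ with $i,j,k$ distinct plus $[A_{\le 2},-]$ — is exactly the computation from \cite{AJ}; I would present it as a lemma stating $[A_{\le 3},L_m]\subset V_m\pmod{L_{m+2}}$ and then bootstrap to all lengths by the $r\mapsto r-1$ step above, closing the induction.
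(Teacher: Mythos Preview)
Your approach has a genuine gap, and diverges substantially from the paper's. The induction on monomial length does not close: when you split $[x_ic, w] = x_i[c,w] + [x_i,w]c$, neither term is of the form $[a', w']$ with $a'$ a shorter monomial and $w' \in L_m$. The term $x_i[c,w]$ lies in $A \cdot L_{m+1} = M_{m+1}$, not in $L_{m+1}$, and you have no mechanism to convert it back into a single bracket against $B_m$; your attempt to rewrite $[x_i,w]c \equiv c[x_i,w] \pmod{L_{m+2}}$ and then ``iterate'' just reproduces the same difficulty one step over. You tacitly acknowledge this by deferring at the end to ``the computation from \cite{AJ},'' but that computation is precisely what is at stake. (Also, your side remark that $[x_i[x_j,x_k], L_m] \subset [L_2, L_m] \subset L_{m+2}$ is false --- $x_i[x_j,x_k] \notin L_2$ in general --- and if it were true, that summand would contribute nothing to $B_{m+1}$.) The invocations of Theorem~\ref{thmBapat} do not help here: that theorem controls products $M_iM_j$, not the conversion of $A\cdot L_{m+1}$ back into $L_{m+1}$.

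The paper's argument avoids a naive length induction altogether. The ingredients you are missing are: (i) by Proposition~\ref{propfeigin}, $[M_3, L_m] \subset L_{m+2}$, so $[P, B_m]$ depends only on the image of $P$ in $A/M_3$; (ii) by the Feigin--Shoikhet theorem (Theorem~\ref{fsthm}), $A/M_3 \cong \Omega^{\rm even}_*$; (iii) an explicit cubic identity (Lemma~\ref{le1}) shows that for the fully symmetrized product $S(a,b,c)$ one has $[S(a,b,c), B_m]$ contained in the span of brackets $[ab,B_m]$, $[bc,B_m]$, $[ca,B_m]$, $[a,B_m]$, $[b,B_m]$, $[c,B_m]$; and (iv) in the associated graded (commutative) algebra $\Omega^{\rm even}$, every element decomposes as a sum of such symmetrized triple products, elements of $X={\rm span}(1,x_i,x_ix_j,x_idx_j\wedge dx_k)$ with $i,j,k$ distinct, and exact forms (which correspond to $L_2$ and hence bracket into $L_{m+2}$). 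This structural decomposition of $A/M_3$ is what makes an induction on degree terminate; it has no analogue in your monomial-length scheme.
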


This theorem is proved in Section 9. 

\begin{remark} Later Bapat and Jordan showed 
that the last summand is in fact redundant, i.e.  one has the following stronger (but more difficult) 
theorem (conjectured in \cite{AJ}). 

\begin{theorem}\label{thmBapat2} (\cite{BJ})
Retain the assumptions of Theorem \ref{thmBapat1}. Then for $m\ge 2$, one has $B_{m+1}=[A_{\le 2},B_m]$.  
\end{theorem}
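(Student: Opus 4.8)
The plan is to reduce first to the free algebra, then to a single containment, after which the proof becomes a bookkeeping argument built on the Leibniz and Jacobi identities. Note that $B_{m+1}=[A_{\leq 2},B_m]$ is equivalent to $L_{m+1}=[A_{\leq 2},L_m]+L_{m+2}$. If $A$ is generated by $x_1,\dots,x_n$ and $\pi\colon A_n\twoheadrightarrow A$ is the canonical surjection, then $\pi(L_i(A_n))=L_i(A)$ for all $i$ and $\pi((A_n)_{\leq 2})=A_{\leq 2}$; applying $\pi$ to an expression of an element of $L_{m+1}(A_n)$ as a sum of elements of $[(A_n)_{\leq 2},L_m(A_n)]$ and $L_{m+2}(A_n)$ therefore reduces the theorem to the case $A=A_n$, where we may assume $n\geq 3$ since otherwise the extra summand in Theorem~\ref{thmBapat1} is already empty. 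Using Theorem~\ref{thmBapat1} (which says $L_{m+1}=[A_{\leq 2},L_m]+\sum_{i,j,k\text{ distinct}}[x_i[x_j,x_k],L_m]+L_{m+2}$), it then suffices to prove that
$$
[x_i[x_j,x_k],c]\ \in\ [A_{\leq 2},L_m]+L_{m+2}
$$
for every triple of pairwise distinct indices $i,j,k$ and every $c\in L_m$.

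The two workhorses are the Leibniz rule $[uv,c]=u[v,c]+[u,c]v$ and the Jacobi identity $[[a,b],c]=[a,[b,c]]-[b,[a,c]]$, used together with the elementary containments $[L_p,L_q]\subseteq L_{p+q}$ and the observation that any product of at most two generators lies in $A_{\leq 2}$. The guiding idea is that the degree-$3$ element $x_i[x_j,x_k]$ should be rewritten as a product of a degree-$\leq 2$ element with a single stray generator, which is then absorbed by pulling it inside a commutator via Jacobi, at the cost only of corrections in $[A_{\leq 2},L_m]$ or in $L_{m+2}$. As a model -- and as an explanation of why only the pairwise-distinct terms appear in Theorem~\ref{thmBapat1} -- consider the "diagonal" element $x_i[x_i,x_k]$: Leibniz gives $x_i[x_i,x_k]=\tfrac12[x_i^2,x_k]+\tfrac12[x_i,[x_i,x_k]]$, hence $[x_i[x_i,x_k],c]=\tfrac12[[x_i^2,x_k],c]+\tfrac12[[x_i,[x_i,x_k]],c]$; the last bracket lies in $[L_3,L_m]\subseteq L_{m+2}$, and Jacobi turns the first into $[x_i^2,[x_k,c]]-[x_k,[x_i^2,c]]$, where $x_i^2\in A_{\leq 2}$ and $[x_k,c]\in L_{m+1}\subseteq L_m$ so the first term lies in $[A_{\leq 2},L_m]$, while $[x_i^2,c]\in[A_{\leq 2},L_m]\subseteq L_{m+1}$ so $[x_k,[x_i^2,c]]\in L_{m+2}$.

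For pairwise distinct $i,j,k$ I would instead start from the identity $[x_ix_j,x_k]=x_i[x_j,x_k]+[x_i,x_k]x_j$, i.e.\ $x_i[x_j,x_k]=[x_ix_j,x_k]-[x_i,x_k]x_j$, which gives
$$
[x_i[x_j,x_k],c]=[[x_ix_j,x_k],c]-[[x_i,x_k]x_j,c].
$$
Since $x_ix_j\in A_{\leq 2}$, the first term is disposed of by a single application of Jacobi exactly as in the diagonal model. The second term expands by Leibniz to $[[x_i,x_k]x_j,c]=[x_i,x_k][x_j,c]+[[x_i,x_k],c]x_j$; here $[x_i,x_k]\in A_{\leq 2}$ and $[[x_i,x_k],c]\in[A_{\leq 2},L_m]$, but each of these is multiplied on the right by the generator $x_j$, which a priori destroys membership in $[A_{\leq 2},L_m]$, so this is exactly where the argument stops being routine.

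The hard part will be absorbing this remaining "right multiplication by a generator" term. The plan is to exploit the antisymmetry of $x_i[x_j,x_k]$ under $j\leftrightarrow k$ -- writing $x_i[x_j,x_k]$ simultaneously via $[x_ix_j,x_k]$ and via $[x_ix_k,x_j]$ and averaging -- together with the Jacobi identity applied to the triple $x_i,x_j,x_k$, so that the surviving terms recombine into an element of $[A_{\leq 2},L_m]$ modulo $L_{m+2}$. This forces one to keep track of error terms that a priori only lie in one of the ideals $M_i=AL_i$ (so the Gupta--Levin estimate of Theorem~\ref{thmguptalevin} is available for bounding products of them) and to verify that, after the recombination, they in fact lie in $L_{m+2}$; I expect that it is precisely at this final cancellation that Theorem~\ref{thmBapat} has to be invoked, which is why the hypothesis $\tfrac16\in R$ (i.e.\ characteristic zero), and the parity of $m$, enter. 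The genuine difficulty is to arrange the cancellation so that it is not merely an identity holding tautologically modulo $L_{m+2}$ -- as happens if one naively peels generators off a single side -- but a non-trivial linear relation obtained by combining several such rewritings; making this bookkeeping precise is the technical crux of the proof.
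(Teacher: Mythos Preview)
The paper does not actually prove Theorem~\ref{thmBapat2}; it states the result and immediately says ``We refer the reader to \cite{BJ} for a proof of this result.'' So there is no in-paper argument to compare your proposal against.

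That said, your proposal is not a proof but an outline that explicitly leaves the essential step undone. Your reduction to the free algebra and to the single containment $[x_i[x_j,x_k],c]\in[A_{\le 2},L_m]+L_{m+2}$ (for distinct $i,j,k$ and $c\in L_m$) is correct and is exactly the right starting point; the ``diagonal'' model you work out is fine but irrelevant since Theorem~\ref{thmBapat1} already restricts to distinct indices. The gap is in the last paragraph, where you candidly say the ``technical crux'' remains. Concretely: the residual terms your Leibniz/Jacobi expansion produces, such as $[x_i,x_k][x_j,c]$ and $[[x_i,x_k],c]\,x_j$, lie a priori in $L_2L_{m+1}$ and $L_{m+2}\cdot A$, hence only in the two-sided ideals $M_{m+1}$ or $M_{m+2}$. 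The tools you propose to invoke---Theorem~\ref{thmguptalevin} and Theorem~\ref{thmBapat}---bound products $M_iM_j$ inside some $M_{i+j-1}$ or $M_{i+j-2}$, never inside an $L_k$. There is no mechanism in your plan for upgrading membership in $M_{m+2}$ to membership in $L_{m+2}$ (or in $[A_{\le 2},L_m]$), and symmetrizing in $j,k$ or averaging the two Leibniz expressions for $x_i[x_j,x_k]$ does not by itself produce such an upgrade: one still has products of an $L_2$-element with an $L_{m+1}$-element, just with different index patterns. Your remark that ``the parity of $m$ enters'' via Theorem~\ref{thmBapat} is also a warning sign, since the statement holds uniformly for all $m\ge 2$.

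In short, the difficulty you identify is real, and your sketch stops precisely where the genuine work of \cite{BJ} begins; what is written is a plausible plan rather than a proof.
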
 

We refer the reader to \cite{BJ} for a proof of this result. 
\end{remark} 

\subsection{Results on the lower central series of the free algebra $A_n(\mathbb{C})$}

Let $A=A_n$ and $R=\mathbb{C}$. We are interested in the Hilbert series of $N_i$ and $B_i$. 

First consider the case $i=1$. Recall that $N_1=A_{ab}=\mathbb{C}[x_1,x_2,\dots,x_n]$, so $h_{N_1}(t)=\frac{1}{(1-t)^n}$. 
Also, recall that $B_1=A/[A,A]$. Therefore, it has a basis consisting of cyclic words (necklaces) in $x_1,x_2,\dots,x_n$, i.e. words
considered up to cyclic permutation. So, ${\rm dim}B_1[d]$ 
is equal to the number of necklaces of length $d$. Let us denote this number by $a_d(n)$. 
It follows from Polya's enumeration theorem that 
$$
\prod_{d\geq 1}(1-t^d)^{a_d(n)}=(1-nt)(1-nt^2)(1-nt^3)\dots,
$$
which allows us to easily compute $a_d(n)$ recursively.  
Hence, $a_{d}  \sim n^d$ (in sense that $\underset{d\to\infty}{\lim} \frac{\log (a_d(n))} {d}=\log n),$
and in particular $a_d(n)$ has exponential growth as $d\to\infty$. 

It turns out, however, that the dimensions of the homogeneous parts $B_i[d]$ of the spaces $B_i$
grow polynomially for all $i\ge 2$ (and the same holds for $N_i$). Namely, the  
spaces $B_2,\ N_2$ are known explicitly, and we'll discuss their structure and Hilbert series below.  
The Hilbert series of $B_i,N_i$ for $i\ge 3$ are known only in a few special cases, 
but we have the following theorem. 

\begin{theorem}[\cite{FS,DE,AJ}]\label{hilserthm}
The Hilbert series $h_{B_i}(t)$ and $h_{N_i}(t)$ for $i\geq 3$ are of the form $\frac{P(t)}{(1-t)^n}$, where $P(t)$ is a polynomial with positive integer coefficients.
\end{theorem}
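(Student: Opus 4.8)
The plan is to deduce the theorem from the representation theory of the Lie algebra $W_n$ of polynomial vector fields on $\mathbb{A}^n$, following Feigin--Shoikhet. It is useful to separate two assertions of very different depth: \emph{polynomiality}, that $(1-t)^n h_{B_i}(t)$ and $(1-t)^n h_{N_i}(t)$ are polynomials with integer coefficients, and \emph{positivity}, that these coefficients are nonnegative. The second is the hard part, and is the source of the hypothesis $i\ge 3$.

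For polynomiality I would use that, as recalled in the earlier sections, $B_i$ and $N_i$ carry natural $W_n$-module structures and, for $i\ge 2$, are finite-length bounded $W_n$-modules (the main theorem of \cite{FS}). For any such module $M$, its composition factors are (again by the earlier sections) subquotients of tensor field modules $V_\lambda\otimes\mathbb{C}[x_1,\dots,x_n]$ with $V_\lambda$ a finite-dimensional $\mathfrak{gl}_n$-module, and for such a subquotient the dimension of the degree-$d$ component is, for $d\gg 0$, a polynomial in $d$ of degree $\le n-1$; summing over the finitely many factors, the same holds for $M$, and hence $h_M(t)\in\frac{1}{(1-t)^n}\mathbb{Z}[t]$. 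Applied to $B_i$ and $N_i$ this gives polynomiality (and re-proves polynomial growth) for all $i\ge 2$.

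For positivity the goal is to show that, for $i\ge 3$, the $W_n$-modules $B_i$ and $N_i$ are built from \emph{full} tensor field modules: that each admits a finite filtration by graded $W_n$-submodules whose successive quotients are degree-shifts of modules of the form $V_\lambda\otimes\mathbb{C}[x_1,\dots,x_n]$. Granting this, each such quotient is graded-free over $\mathbb{C}[x_1,\dots,x_n]$, hence graded-projective, so the filtration splits as one of graded $\mathbb{C}[x_1,\dots,x_n]$-modules and $B_i$ (resp. $N_i$) is graded-free over $\mathbb{C}[x_1,\dots,x_n]$; its Hilbert series is then $\big(\sum_\alpha\dim(V_{\lambda_\alpha})\,t^{d_\alpha}\big)/(1-t)^n$, whose numerator has nonnegative integer coefficients --- the assertion. (For positivity of the numerator it is in fact enough to know that $[B_i]$ and $[N_i]$ are nonnegative integer combinations of classes of degree-shifted modules $V_\lambda\otimes\mathbb{C}[x_1,\dots,x_n]$ in the Grothendieck group of $W_n$-modules.) I would establish the filtration statement by induction on $i$. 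The base case is the explicit structure of $B_2$ and $N_2$: the Feigin--Shoikhet isomorphism $A_n/M_3\cong(\Omega^{\mathrm{ev}}(\mathbb{A}^n),\circ)$ identifies $N_2$ with $\bigoplus_{k\ge 1}\Omega^{2k}(\mathbb{A}^n)=\bigoplus_{k\ge 1}\big(\wedge^{2k}\otimes\mathbb{C}[x_1,\dots,x_n]\big)$, and $B_2$ with the closely related $\bigoplus_{k\ge 1}\Omega^{2k}_{\mathrm{closed}}(\mathbb{A}^n)$. The inductive step uses Theorem \ref{thmBapat1} (or its refinement \ref{thmBapat2}) to present $B_{i+1}$ as the image of the bracketing map $A_{\le 2}\otimes B_i\to B_{i+1}$, after which one must check that this image, with its $W_n$-structure, is again of the filtered form.

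That last check is the main obstacle, and it cannot be done abstractly: a $W_n$-quotient of a module filtered by full tensor field modules need \emph{not} be of that form, as $\Omega^k\twoheadrightarrow\Omega^k/\Omega^k_{\mathrm{closed}}$ and $\mathbb{C}[x_1,\dots,x_n]\twoheadrightarrow\mathbb{C}[x_1,\dots,x_n]/\mathbb{C}$ (quotients of free $\mathbb{C}[x_1,\dots,x_n]$-modules that are not free) show. One must instead use the precise form of the bracketing maps coming from the associative product, together with the equivariance they enjoy under the non-positive part of $W_n$; this is where the real work of \cite{FS,DE,AJ} is concentrated. It is also why $i\ge 3$ is essential rather than cosmetic: the inductive step produces $B_{i+1}$ from $B_i$ with $i\ge 2$, so only $i\ge 3$ is reached from the understood base $B_2$, and the conclusion genuinely fails at $i=2$ for $n\ge 3$ --- for example $B_2(A_3)\cong\Omega^2_{\mathrm{closed}}(\mathbb{A}^3)$ has Hilbert series $(3t^2-t^3)/(1-t)^3$, whose numerator is not positive. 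For the $N_i$ part I would run the same argument with the ideal filtration $M_\bullet$ in place of $L_\bullet$, using the structural analogues for $N_i$ in \cite{EKM,Ke,JO,CFZ}; the formalism and the conclusion (graded-freeness over $\mathbb{C}[x_1,\dots,x_n]$ for $i\ge 3$) are the same.
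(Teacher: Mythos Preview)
Your outline correctly isolates the crux: positivity holds once one knows that for $i\ge 3$ every $W_n$-composition factor of $B_i$ and $N_i$ is a full tensor-field module $\mathcal{F}_\lambda$ rather than one of the exceptional irreducibles $\Omega_{\rm closed}^N$. But your proposed route to this --- induction on $i$ through the bracketing maps of Theorem~\ref{thmBapat1} --- is not carried out. You yourself flag the inductive step as ``the main obstacle'' and defer to the literature; worse, your declared base case $B_2$ is \emph{not} of the desired form (as your own example $B_2(A_3)\cong\Omega_{\rm closed}^2$ with numerator $3t^2-t^3$ shows), so the passage from $i=2$ to $i=3$ is precisely where the argument must begin, and you give no mechanism for it. Since, as you note, a $W_n$-quotient of a module filtered by $\mathcal{F}_\lambda$'s need not be so filtered, this is a genuine gap rather than a routine omission.

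The paper's proof bypasses induction entirely and is much shorter. By Rudakov's classification (Theorem~\ref{Rud}) the simple objects of $\mathcal{C}$ are the irreducible $\mathcal{F}_\lambda$ and the modules $\Omega_{\rm closed}^N$ for $0\le N\le n$; one need only exclude the latter from $B_i,N_i$ when $i\ge 3$. This is done by a one-paragraph counting argument in the polylinear part (degree one in each variable): that part of $A_n$ is the regular representation of $S_n$, hence contains a \emph{unique} copy of the sign representation. Via the Feigin--Shoikhet description this copy already sits in $\overline{B}_1\oplus B_2$ (resp.\ $N_1\oplus N_2$), so it cannot appear in any $B_i$ or $N_i$ with $i\ge 3$. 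But $\Omega_{\rm closed}^n$ contains $dx_1\wedge\cdots\wedge dx_n$, which spans exactly such a sign copy; hence $\Omega_{\rm closed}^n$ does not occur. For $N<n$ one reduces to this case by setting $x_{N+1},\dots,x_n$ to zero. With only $\mathcal{F}_\lambda$'s occurring and finite length in hand (Proposition~\ref{finlength}), one reads off $h_{B_i}(t)=\sum_\lambda m_\lambda\dim(V_\lambda)\,t^{|\lambda|}/(1-t)^n$ with $m_\lambda\in\Bbb Z_{\ge 0}$, and positivity is immediate --- no induction and no analysis of bracketing maps is needed.
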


Theorem \ref{hilserthm} is proved in Section 10. 

As we mentioned, this property fails for $B_1$, but this is ``corrected" by the following proposition. 

\begin{proposition}\label{central} (\cite{FS}) Let $A$ be any algebra over $\Bbb C$, $Z\subset B_1=A/[A,A]$ 
be the image of $M_3$ in $B_1$, i.e.   $Z=M_3/M_3 \cap L_2$, and $\overline{B_1}=B_1/Z.$ Then 

(1) $Z\subset B_1$ is central in $B$ 
(so that $\overline{B}:=B/Z=\overline{B}_1\oplus B_2\oplus...$ is a graded Lie algebra); 

(2) for $A=A_n$, we have $h_{\overline {B}_1}(t)=\frac{P(t)}{(1-t)^n}$, where $P(t)$ is a polynomial.
\end{proposition}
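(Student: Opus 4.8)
The plan is to prove both statements by reducing to the structure of $M_3$ and exploiting the Gupta--Levin and Bapat--Jordan inclusions from the previous subsection. For part (1), I want to show that $[M_3, L_i] \subset L_{i+2}$ for all $i \ge 1$; applied with $i = 1$ this gives $[M_3, A] \subset L_3 \subset L_2$, so the image $Z$ of $M_3$ in $B_1 = A/L_2$ commutes in $B$ with $B_1$; combined with the fact (noted in the preliminaries) that $B$ is generated as a Lie algebra by $B_1$, this forces $Z$ to be central in all of $B$. The inclusion $[M_3, L_i] \subset L_{i+2}$ itself should follow from $M_3 = A L_3$ together with a Leibniz-type manipulation: for $a \in A$ and $u \in L_3$, write $[au, v] = a[u,v] + [a,v]u$ for $v \in L_i$; the first term lies in $A[L_3, L_i] \subset A L_{i+3} \subset M_{i+3} \subset L_{i+2}$ (using $M_k \subset L_{k-1}$, which holds since $M_k = AL_k$ and... actually one should instead use $M_{i+3} \subset M_{i+2} \subset L_{i+2}$ via $M_j \subset L_{j}$? — the correct containment to invoke is $M_j \subset L_{j-1}$, proved elsewhere in the theory, or more simply $M_k M_\ell \subset M_{k+\ell-2}$ does not directly help here), and the second term $[a,v]u$ with $[a,v] \in L_{i+1}$ and $u \in L_3$ lies in $L_{i+1} L_3 \subset M_3 \cdot (\text{something})$; here I would instead use Theorem~\ref{thmguptalevin}, $M_i M_j \subset M_{i+j-2}$, after passing from the Lie-bracket expressions to the two-sided ideals. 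The cleanest route is probably: $[M_3, L_i] \subset M_3 L_i + L_i M_3 \subset M_3 M_i \subset M_{i+1}$ by Gupta--Levin, and then improve $M_{i+1}$ to $L_{i+2}$ — but that improvement is exactly the subtle point and may require $\tfrac16 \in R$ via Theorem~\ref{thmBapat} with one index odd, or a separate lemma. I expect this bookkeeping — getting the index to land on $L_{i+2}$ rather than merely $L_{i+1}$ or $M_{i+1}$ — to be the main obstacle, and the honest version of the argument likely cites a precise statement such as $[M_i, A] \subset L_{i+1}$ or $[M_3, L_i] \subset L_{i+2}$ that is established (or is folklore) within this circle of ideas.

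For part (2), the strategy is to identify $\overline{B_1} = B_1 / Z = (A/L_2)/(M_3/(M_3 \cap L_2))$ with $A/(L_2 + M_3)$, and then note $L_2 + M_3 \supset M_2$? — no: $M_2 = A L_2 A \supset L_2$, and $M_3 \subset M_2$, so $L_2 + M_3 \subset M_2$, giving a surjection $\overline{B_1} \twoheadrightarrow A/M_2 = N_1 = \mathbb{C}[x_1, \dots, x_n]$ with kernel $M_2/(L_2 + M_3)$. I would instead work directly: $\overline{B_1} \cong A/(L_2 + M_3)$, and since $M_2/M_3 = N_2$ and $L_2 + M_3$ sits between $M_3$ and $M_2$ up to the relation with $L_2$, there is an exact sequence expressing $\overline{B_1}$ as an extension of $N_1 = \mathbb{C}[x_1,\dots,x_n]$ (Hilbert series $(1-t)^{-n}$) by a subquotient of $N_2 = M_2/M_3$. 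The Hilbert series of $N_2$ for $A = A_n$ is known explicitly (it is computed in the cited work of Feigin--Shoikhet and the follow-ups, and is of the form $Q(t)/(1-t)^n$ with $Q$ a polynomial — indeed $N_2$ is a finite-length $W_n$-module, hence its Hilbert series has this shape). Therefore $h_{\overline{B_1}}(t)$ is a $\mathbb{Z}$-linear combination $\frac{1 + (\text{polynomial})}{(1-t)^n} = \frac{P(t)}{(1-t)^n}$.

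The key steps, in order: (i) prove $[M_3, A] \subset L_3$ (equivalently $[M_3, L_i] \subset L_{i+2}$), which is the content needed for centrality and is the step I expect to fight with; (ii) deduce that $Z = M_3/(M_3 \cap L_2)$ is central in $B$, using that $B$ is generated by $B_1$; (iii) for $A = A_n$, rewrite $\overline{B_1} = A/(L_2 + M_3)$ and produce the short exact sequence relating it to $N_1$ and a subquotient of $N_2$; (iv) quote the known Hilbert series of $N_1$ and $N_2$ (each of the form $\text{poly}/(1-t)^n$) to conclude that $h_{\overline{B_1}}$ has the asserted form. The main obstacle is genuinely step (i): one must be careful that commuting an element of the two-sided ideal $M_3$ with a single element of $A$ drops \emph{two} levels in the lower central series, not one, and the clean proof of this uses the associative structure (writing $ab \cdot c - c \cdot ab$ and expanding with the Leibniz rule, then feeding the pieces into Theorem~\ref{thmguptalevin}) together with the identity $M_3 = A L_3$; I would organize this as a small lemma before invoking it.
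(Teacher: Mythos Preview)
Your outline for part (1) is exactly the paper's: reduce to the inclusion $[M_3,A]\subset L_3$ (more generally $[M_3,L_j]\subset L_{j+2}$), then use that $B$ is generated by $B_1$ to conclude $Z$ is central. Where you diverge is in the proof of the key inclusion. Your Leibniz attempt $[au,v]=a[u,v]+[a,v]u$ only yields $a[u,v]\in M_4$ and $[a,v]u\in L_2L_3\subset M_3$ (via Gupta--Levin), which together give nothing better than the tautology $[M_3,A]\subset M_3$; there is no way to upgrade $M_3$ or $M_4$ to $L_3$ by these product-of-ideals inclusions alone. The paper does \emph{not} derive $[M_3,A]\subset L_3$ from Theorems~\ref{thmguptalevin} or~\ref{thmBapat}: it is a direct polylinear computation in $A_5$, checking by hand (or computer) that $[x[y,z,u],v]\in L_3$, and this holds over any base ring. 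Once you have $[M_3,A]\subset L_3$, the general inclusion $[M_3,L_j]\subset L_{j+2}$ follows by writing an iterated bracket so that $M_3$ sits in the innermost slot (Lemma~\ref{multilin}). Over $\Bbb C$ you could alternatively quote Lemma~\ref{lemmaBapat1}, which gives the stronger $[M_3,A]\subset L_4$, but that also rests on a finite identity, not on the ideal-product bounds you were reaching for.

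For part (2), your exact-sequence approach
\[
0\;\longrightarrow\; M_2/(L_2+M_3)\;\longrightarrow\;\overline{B}_1\;\longrightarrow\;N_1\;\longrightarrow\;0
\]
is correct in principle but not self-contained as written: knowing that the kernel is a subquotient of $N_2$ does not by itself force its Hilbert series to have the shape $Q(t)/(1-t)^n$. You would either have to identify the kernel explicitly as $N_2/B_2\cong\Omega^{{\rm even},+}/\Omega^{\rm even}_{\rm exact}$ (which already uses Theorem~\ref{fsthm}(2)--(4)), or invoke the $W_n$-module structure and finite length of $N_2$ to control arbitrary subquotients. The paper bypasses this by quoting Theorem~\ref{fsthm}(5) directly: $\overline{B}_1\cong \Omega^{\rm even}/\Omega^{\rm even}_{\rm exact}$, whose Hilbert series is visibly $P(t)/(1-t)^n$. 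Your route ultimately feeds into the same Feigin--Shoikhet structure theorem, just through a longer path.
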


Proposition \ref{central}(1) is proved in Section 4. Proposition \ref{central}(2) follows from Theorem \ref{fsthm}(5)
below. 

Now let us describe $B_2$ and $N_2$ in the case of the free algebra $A=A_n(\Bbb C)$. 
We will get this description from the explicit description of $A/M_3$, obtained by Feigin and Shoikhet (\cite{FS}). 
Namely, let $\Omega=\Omega(\mathbb{C}^n)$ be the space of differential forms on $\mathbb{C}^n$
with polynomial coefficients. We have a decomposition $\Omega=\oplus_{k=0}^n \Omega^k$, 
where $\Omega^k$ is the space of differential forms of degree $k$, which is a free 
module over $\Bbb C[x_1,...,x_n]$ with basis $dx_{i_1}\wedge...\wedge dx_{i_k}$, $i_1<...<i_k$. 
Let $\Omega^{\rm even}=\oplus_{0\le i\le n/2}\Omega^{2i}$ be the space of even forms. 
The space $\Omega$ is a supercommutative algebra under wedge product, 
and $\Omega^{\rm even}$ is a commutative subalgebra in it.
Also, $\Omega$ carries a de Rham differential $d: \Omega^k\to \Omega^{k+1}$. 

Following Feigin and Shoikhet \cite{FS}, we introduce another product 
on $\Omega$ (and $\Omega^{\rm even}$), denoted $\ast$. 
Namely, we set 
$$
\alpha \ast \beta=\alpha \wedge \beta +\frac{1}{2}d\alpha\wedge d\beta;$$ 
the corresponding commutator on $\Omega_{\rm even}$ is given by
$$[\alpha,\beta]_\ast=\alpha\ast\beta-\beta\ast\alpha=d\alpha\wedge d\beta,$$
if $\alpha,\ \beta\in \Omega_{\rm even}$ (below we will denote this commutator just by $[,]$ when no confusion is possible). 
It is easy to check that this product is associative, but, unlike the wedge product, it is noncommutative
on even forms, and
does not preserve the grading by degree of differential forms. 
We will denote the algebra $\Omega$ with this operation by $\Omega_{\ast}$, and 
its even part by $\Omega_\ast^{\rm even}$. Note that $\Omega=\Omega_\ast$ as vector spaces, and 
we will use these two notations interchangeably when we don't consider multiplication. 

\begin{proposition}\label{polid} 
The algebra $\Omega_\ast^{\rm even} $ satisfies the polynomial identity $[[a,b],c]=0$. 
\end{proposition}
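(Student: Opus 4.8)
The plan is to verify the identity $[[a,b],c]=0$ directly from the explicit formula for the $\ast$-commutator on even forms. Recall that for $\alpha,\beta\in\Omega_\ast^{\rm even}$ we have $[\alpha,\beta]=d\alpha\wedge d\beta$, which lies in $\Omega^{\rm even}$ since $d\alpha$ and $d\beta$ are both odd forms and their wedge is even. So $[[a,b],c]$ makes sense as an element of $\Omega_\ast^{\rm even}$, and I would compute it by applying the formula twice.

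The key observation is that $d\alpha\wedge d\beta$ is a closed form: since $d^2=0$ and $d$ is a superderivation, $d(d\alpha\wedge d\beta) = d(d\alpha)\wedge d\beta \pm d\alpha\wedge d(d\beta) = 0$. Therefore $d[a,b] = d(da\wedge db) = 0$. Now I apply the bracket formula once more:
\[
[[a,b],c] = d[a,b]\wedge dc = 0\wedge dc = 0.
\]
That is the entire argument; the only thing to check carefully is that the commutator formula $[\alpha,\beta]_\ast = d\alpha\wedge d\beta$ genuinely holds on all even forms (not just on a subclass) and that signs in the superderivation rule for $d$ work out so that $d\alpha\wedge d\beta$ is indeed closed — but this is immediate because $d\alpha$ is a form of odd degree, so $d(d\alpha\wedge d\beta) = (dd\alpha)\wedge d\beta - d\alpha\wedge (dd\beta) = 0$ regardless.

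There is essentially no obstacle here: the statement is a one-line consequence of $d^2=0$ once one has the formula $[\alpha,\beta] = d\alpha\wedge d\beta$, which is given in the text just above the proposition. The only minor point worth spelling out is why $[[a,b],c]$ should be interpreted using the same formula — namely, that $[a,b]=da\wedge db\in\Omega^{\rm even}$, so it is a legitimate argument of the bracket, and then $[[a,b],c] = d(da\wedge db)\wedge dc$, which vanishes because $d(da\wedge db)=0$. I would present it in exactly this order: state that $[a,b]=da\wedge db$, observe this is closed, and conclude.
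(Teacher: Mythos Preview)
Your proof is correct and follows essentially the same route as the paper's: both compute $[[a,b],c]=d[a,b]\wedge dc$ and then show $d[a,b]=0$. The only cosmetic difference is that the paper writes $da\wedge db=d(a\wedge db)$ to see exactness (hence closedness) directly, whereas you invoke the super-Leibniz rule and $d^2=0$; these are equivalent one-line observations.
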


\begin{proof}
Since 
$$
[a,b]=da\wedge db=d(a\wedge db),
$$
we have $d[a,b]=0$, and hence  
$$
[[a,b],c]=d[a,b]\wedge dc=0.
$$ 
\end{proof}

\begin{corollary}
\label{corhomo}
 The algebra homomorphism $\widetilde{\phi}: A\to \Omega_\ast^{\rm even}$ defined by $ \widetilde{\phi}(x_i)=x_i$
descends to a homomorphism $\phi: A/M_3\to \Omega_\ast^{\rm even}$ .
\end{corollary}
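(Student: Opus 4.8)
The plan is to show that the free-algebra homomorphism $\widetilde\phi$ kills the two-sided ideal $M_3$, so that it factors through $A/M_3$ by the universal property of quotients.

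\medskip

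First I would invoke the universal property of the free algebra: since $A=A_n$ is free on $x_1,\dots,x_n$, there is a unique algebra homomorphism $\widetilde\phi\colon A\to\Omega_\ast^{\rm even}$ sending each generator $x_i$ to the $0$-form $x_i$ (the $i$-th coordinate function); the noncommutativity of $\Omega_\ast^{\rm even}$ is no obstruction, as a homomorphism out of a free algebra can take arbitrary values on the generators. Next I would observe that any homomorphism of associative algebras automatically preserves commutators, hence is a homomorphism of the underlying Lie algebras; consequently $\widetilde\phi(L_i(A))\subseteq L_i(\Omega_\ast^{\rm even})$ for every $i$, and in particular $\widetilde\phi(L_3(A))\subseteq L_3(\Omega_\ast^{\rm even})$.

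\medskip

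The key step is then that $L_3(\Omega_\ast^{\rm even})=0$: by definition $L_3(\Omega_\ast^{\rm even})$ is the $\Bbb C$-span of triple commutators $[[\alpha,\beta],\gamma]_\ast$, and every such element vanishes by Proposition \ref{polid}. Therefore $\widetilde\phi(L_3(A))=0$, and since $M_3=A\cdot L_3\cdot A$ and $\widetilde\phi$ is multiplicative, we get
$$
\widetilde\phi(M_3)=\widetilde\phi(A)\,\widetilde\phi(L_3(A))\,\widetilde\phi(A)=0 .
$$
Thus $M_3\subseteq\ker\widetilde\phi$, so the quotient map $A\to A/M_3$ factors $\widetilde\phi$ uniquely through an algebra homomorphism $\phi\colon A/M_3\to\Omega_\ast^{\rm even}$ with $\phi(x_i)=x_i$, which is exactly the claimed statement.

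\medskip

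There is no serious obstacle here; the proof is essentially a bookkeeping exercise. The one point to handle with a little care is the passage from Proposition \ref{polid} — which a priori only says that each individual triple commutator is zero — to the conclusion that the whole subspace $\widetilde\phi(L_3(A))$ is zero, which follows at once because $L_3(\Omega_\ast^{\rm even})$ is by construction the span of such commutators and $\widetilde\phi$ is linear. (Equivalently, one may phrase it as: Proposition \ref{polid} says $L_2(\Omega_\ast^{\rm even})$ is central, so $L_3(\Omega_\ast^{\rm even})=[\Omega_\ast^{\rm even},L_2(\Omega_\ast^{\rm even})]=0$.)
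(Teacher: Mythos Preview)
Your proof is correct and is exactly the argument the paper has in mind: the paper's proof is the single sentence ``This immediately follows from Proposition \ref{polid},'' and you have simply unpacked that sentence by noting that $\widetilde\phi$ carries $L_3(A)$ into $L_3(\Omega_\ast^{\rm even})=0$ and hence kills the ideal $M_3=AL_3A$.
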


\begin{proof}
This immediately follows from Proposition \ref{polid}. 
\end{proof} 

One of the main theorems about lower central series of associative algebras is the following theorem
about the lower central series of $A_n(\Bbb C)$.  

\begin{theorem}\label{fsthm}(\cite{FS}) 
\begin{enumerate}
\item $\phi $ is an algebra isomorphism.
\item $\phi(N_2)=\Omega^{\rm even,+}=\oplus_{1\le i\le n/2}\Omega^{2i}$, the space of even forms of positive degree.
\item The image of $L_2$ in $A/M_3$ is naturally isomorphic to $B_2$ (i.e.  $L_2\cap M_3=L_3$). 
\item $\phi(B_2)=\Omega_{{\rm exact}}^{\rm even}$, the space of even exact forms, i.e.  of even forms $\alpha$ such that $d\alpha=0$.
\item $\phi $ induces a map $\hat\phi: B_1\to \Omega ^{\rm even}/\Omega_{{\rm exact}}^{\rm even}$ such that $\hat\phi(Z)=0$, so $\hat\phi$ defines 
a map $\overline{\phi}: \overline{B}_1\to \Omega ^{\rm even}/\Omega_{{\rm exact}}^{\rm even}$. The map $\overline{\phi}$ is an isomorphism.
\end{enumerate}
\end{theorem}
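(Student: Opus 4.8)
The plan is to prove part (1) first — this is the substantial step — and then to deduce (2), (4) and (5) from it as essentially formal consequences, with (3) requiring one extra dimension count. Throughout, I equip $\Omega_\ast^{\rm even}$ with the grading in which every $x_i$ and every $dx_i$ has degree $1$; since the de Rham differential preserves this total degree, the two summands of $\alpha\ast\beta=\alpha\wedge\beta+\frac12 d\alpha\wedge d\beta$ have equal degree, so $\phi$ is a graded homomorphism, and once it is known to be an isomorphism it automatically matches Hilbert series. Surjectivity of $\phi$ is easy: $\Image\phi$ is a $\ast$-subalgebra of $\Omega_\ast^{\rm even}$ containing $x_1,\dots,x_n$, hence containing each $2$-form $dx_i\wedge dx_j=[x_i,x_j]_\ast$, hence — taking $\ast$-products, whose correction terms vanish because the factors are closed — every constant-coefficient even form; using $x_k\ast\omega=x_k\wedge\omega$ for closed $\omega$, a short induction on polynomial degree gives $\mathbb{C}[x_1,\dots,x_n]\wedge\omega\subseteq\Image\phi$ for every constant-coefficient even form $\omega$ of positive form-degree, and finally $\mathbb{C}[x_1,\dots,x_n]\subseteq\Image\phi$ because $x_{i_1}\ast\cdots\ast x_{i_d}$ is the monomial $x_{i_1}\cdots x_{i_d}$ plus correction terms of higher form-degree already accounted for.

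The heart of the matter is injectivity of $\phi$; by surjectivity this is equivalent to the bound $\dim(A_n/M_3)[d]\le\dim\Omega^{\rm even}[d]$ for all $d$, which I would get by putting words in $A_n/M_3$ into a normal form. Two identities in $A_n/M_3$ drive the reduction. First, since $[[x_i,x_j],x_k]\in L_3\subseteq M_3$, each commutator $c_{ij}:=[x_i,x_j]$ ($i<j$) commutes with every generator, hence is \emph{central} in $A_n/M_3$, and the relation $x_jx_i=x_ix_j-c_{ij}$ lets one sort the letters of a word at the cost of inserting central factors $c_{ij}$. Second, consider the Leibniz expansion
\[
[x_i,[x_j,x_kx_l]]=[x_i,[x_j,x_k]]\,x_l+[x_j,x_k][x_i,x_l]+[x_i,x_k][x_j,x_l]+x_k\,[x_i,[x_j,x_l]].
\]
Here the left-hand side lies in $L_3\subseteq M_3$ and the first and last terms lie in $L_3A_n+A_nL_3\subseteq M_3$, so the remaining two terms sum to zero in $A_n/M_3$: this is the ``Pl\"ucker'' relation $[x_j,x_k][x_i,x_l]+[x_i,x_k][x_j,x_l]=0$. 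Together with $c_{ii}=0$ and antisymmetry it gives $c_{ij}c_{ik}=0$ and lets one rewrite any product of $c$'s on distinct indices as $\pm$ the standard product $c_{s_1s_2}c_{s_3s_4}\cdots c_{s_{2m-1}s_{2m}}$, $s_1<s_2<\cdots<s_{2m}$. Sorting the letters and then normalizing the $c$-part, one concludes that $A_n/M_3$ is spanned by the monomials $x_1^{a_1}\cdots x_n^{a_n}\,c_{s_1s_2}\cdots c_{s_{2m-1}s_{2m}}$, which are in bijection with the standard $\mathbb{C}$-basis $\{x^a\,dx_{s_1}\wedge\cdots\wedge dx_{s_{2m}}\}$ of $\Omega^{\rm even}$. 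The obstacle here is bookkeeping: one must check that this rewriting terminates and always lands in the span of standard monomials (essentially the original argument of Feigin and Shoikhet). Granting it, $\dim(A_n/M_3)[d]\le\dim\Omega^{\rm even}[d]$, so $\phi$ is an isomorphism and $h_{A_n/M_3}(t)=h_{\Omega^{\rm even}}(t)$, which is (1).

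Parts (2) and (4) then come from transporting the ideals $M_2$ and $L_2$ across $\phi$. Since $\phi$ is onto, it carries the image of $L_2=[A_n,A_n]$ in $A_n/M_3$ onto the span of $\{[\gamma,\delta]_\ast=d\gamma\wedge d\delta:\gamma,\delta\in\Omega^{\rm even}\}$; as $d\gamma\wedge d\delta=d(\gamma\wedge d\delta)$, this span consists of even exact forms, and conversely every even exact form is a sum of terms $d(\alpha\wedge dx_s)=d\alpha\wedge d(x_s)=[\alpha,x_s]_\ast$ with $\alpha$ even (write the primitive odd form as a sum of such $\alpha\wedge dx_s$, $\alpha$ even), so the span is exactly $\Omega^{\rm even}_{\rm exact}$; this becomes (4) once (3) identifies the image of $L_2$ with $B_2$. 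For (2): $M_2=A_nL_2$, so $\phi$ carries $N_2=M_2/M_3$ onto the $\ast$-ideal generated by the $d\gamma\wedge d\delta$; since $\eta\ast(d\gamma\wedge d\delta)=\eta\wedge d\gamma\wedge d\delta$ (the correction vanishes, $d\gamma\wedge d\delta$ being closed) this coincides with the wedge-ideal generated by the $2$-forms $dx_i\wedge dx_j$, which is $\Omega^{\rm even,+}=\bigoplus_{i\ge 1}\Omega^{2i}$.

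It remains to treat (3), i.e. $L_2\cap M_3=L_3$, and (5). From (1) and the previous paragraph we have a surjection $B_2=L_2/L_3\twoheadrightarrow L_2/(L_2\cap M_3)\cong\Omega^{\rm even}_{\rm exact}$, so (3) is equivalent to the coefficientwise bound $h_{B_2}(t)\le h_{\Omega^{\rm even}_{\rm exact}}(t)$. I would prove this by analogous bookkeeping for $B_2$: since the $[x_\cdot,x_\cdot]$ are central modulo $L_3$, for monomials $u=x_{a_1}\cdots x_{a_p}$ and $v=x_{b_1}\cdots x_{b_q}$ one has $[u,v]\equiv\sum_{i,j}\widehat{u}_i\,\widehat{v}_j\,[x_{a_i},x_{b_j}]\pmod{L_3}$ (with $\widehat{u}_i$ the word $u$ with its $i$th letter deleted), and one then trims the resulting spanning set of $B_2$, using antisymmetry and Jacobi, down to the size of $\Omega^{\rm even}_{\rm exact}$ in each degree; together with the surjection this forces $L_2\cap M_3=L_3$. (Equivalently one computes the Hilbert series of $B_2(A_n)$ directly, as in \cite{FS}.) Finally (5) is formal: the composite $A_n\to A_n/M_3\xrightarrow{\ \phi\ }\Omega_\ast^{\rm even}\to\Omega^{\rm even}/\Omega^{\rm even}_{\rm exact}$ kills $L_2$ by (4) and kills $M_3$ (as $M_3=0$ in $A_n/M_3$), so it descends to $B_1=A_n/L_2$, giving $\hat\phi$; and $\hat\phi(Z)=0$ because $Z$ is the image of $M_3$ in $B_1$, so $\hat\phi$ descends further to $\overline{B}_1=A_n/(L_2+M_3)$. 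Through the isomorphism $\phi$ this last map is the canonical isomorphism $\overline{B}_1\cong(A_n/M_3)/(\text{image of }L_2)\xrightarrow{\ \sim\ }\Omega_\ast^{\rm even}/\Omega^{\rm even}_{\rm exact}=\Omega^{\rm even}/\Omega^{\rm even}_{\rm exact}$, so $\overline{\phi}$ is an isomorphism; in particular this establishes Proposition \ref{central}(2).
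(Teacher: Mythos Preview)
Your treatment of (1), (2), (4), (5) is essentially the paper's: surjectivity of $\phi$ by generation, injectivity by deriving centrality of the $c_{ij}=[x_i,x_j]$ and the Pl\"ucker relation $c_{ij}c_{kl}+c_{kj}c_{il}\equiv0$ in $A/M_3$ to produce a spanning set of the right size, then reading off the images of $M_2$ and $L_2$ through $\phi$. This matches Section~5.

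Part (3) is the substantial one, and your sketch has a real gap. The displayed formula $[u,v]\equiv\sum_{i,j}\widehat u_i\,\widehat v_j\,[x_{a_i},x_{b_j}]\pmod{L_3}$ is not correct: each $c_{ab}$ is central modulo $L_3$, but the remaining letters are \emph{not}, and commuting them introduces correction terms of the form $(\text{word})\cdot c_{kl}c_{ab}$. Such products do not lie in $L_3$ in general; indeed the same computation you used for (1) gives $c_{kl}c_{ab}+c_{ka}c_{lb}\equiv[x_k,[x_lx_a,x_b]]\pmod{L_3}$, which is a nonzero element of $B_2$ lying in $\ker\theta$---so its vanishing in $B_2$ is \emph{equivalent} to the statement (3) you are trying to prove. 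Beyond this, the individual summands $\widehat u_i\widehat v_j\,c_{ab}$ live in $M_2$, not in $L_2$, so they are not elements of $B_2$ at all; the ``trimming down to the size of $\Omega^{\rm even}_{\rm exact}$'' you invoke is precisely the hard step, and it is not carried out.

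The paper's argument (Section~8) bypasses any direct dimension estimate for $B_2$. It defines a surjection $\eta:\Omega^{\rm even}\otimes\Bbb C^n\twoheadrightarrow B_2$ by $\eta(\alpha\otimes e_i)=[\phi^{-1}(\alpha),x_i]$ (well-defined since $[M_3,A]\subset L_3$), and notes that $\theta\circ\eta=\zeta$, where $\zeta(\alpha\otimes e_i)=d\alpha\wedge dx_i$. Injectivity of $\theta$ thus reduces to the inclusion $\ker\zeta\subset\ker\eta$. The kernel of $\zeta$ is computed explicitly (closed forms; symmetrized terms $\beta\wedge dx_j\otimes e_i+\beta\wedge dx_i\otimes e_j$; and gradients $\sum_i\partial_if\otimes e_i$), and each generator type is checked to be annihilated by $\eta$---the key input for the symmetrized terms being the short lemma that $[x_1[x_2,x_3],x_4]$ is completely antisymmetric in $B_2$.
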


Parts (1),(2),(4) and (5) of Theorem \ref{fsthm} are proved in Section 6, and part (3) 
(which is the hardest one) is proved in Section 8. 

Using Theorem \ref{fsthm}, it is easy to compute the Hilbert series of $\overline{B}_1$, $B_2$ and $N_2$ for $A=A_n$.

Let us now present some bounds on the degree of the polynomial $P(t)$ in the numerator of the Hilbert series 
of $B_m,N_m$ for $m\ge 3$ (see Theorem \ref{hilserthm}). 

\begin{theorem}[\cite{AJ}]\label{hsbounds} For $m\ge 3$ we have  
$$\text{deg}\big(h_{B_m}(t)(1-t)^n\big)\leq 2m-3+2\lfloor{\frac{n-2}{2}\rfloor};$$
$$\text{deg}\big(h_{N_m}(t)(1-t)^n\big)\leq 2m-2+2\lfloor{\frac{n-2}{2}\rfloor}.$$
\end{theorem}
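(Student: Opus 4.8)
The plan is to prove both bounds by induction on $m$, with the inductive step supplied by Theorem~\ref{thmBapat1} and the base case by the Feigin--Shoikhet description of $B_2$ and $N_2$ (Theorem~\ref{fsthm}). For the base, I would use that, by Theorem~\ref{fsthm}, $B_2\cong\Omega_{\rm exact}^{\rm even}$ and $N_2\cong\Omega^{\rm even,+}$ as graded $\mathbb{C}[x_1,\dots,x_n]$-modules; since each $\Omega^{2k}$ is free over $\mathbb{C}[x_1,\dots,x_n]$ of rank $\binom nk$ and generated in degree $2k$, and the de Rham complex is exact in positive degrees, a short telescoping computation gives $\deg\big(h_{N_2}(t)(1-t)^n\big)=2\lfloor n/2\rfloor$ and $\deg\big(h_{B_2}(t)(1-t)^n\big)=n$. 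The former already equals the value of the claimed $N_m$-bound at $m=2$, so the induction for $N_m$ may be started at $m=2$; the latter is strictly larger than the value the claimed $B_m$-bound would give at $m=2$ (which is exactly why the theorem is stated only for $m\ge 3$), so for $B_m$ one performs one further bracket step -- Theorem~\ref{thmBapat1} with $m=2$ together with the module estimate described below -- to establish the base case $m=3$.

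For the inductive step, assume the bound for $B_m$ (resp. $N_m$). By Theorem~\ref{thmBapat1},
$$
B_{m+1}=[A_{\le 2},B_m]+\sum_{i,j,k\text{ distinct}}[x_i[x_j,x_k],B_m],
$$
so $B_{m+1}$ is the sum of the images of finitely many graded maps out of degree-shifts of $B_m$: brackets with $x_i$ (shift $1$), with $x_ix_j$ (shift $2$), and with $x_i[x_j,x_k]$ (shift $3$). The essential structural input is that the $B_i$ and $N_i$ ($i\ge 2$) are, compatibly with these bracket maps, finitely generated graded modules over the polynomial ring $\mathbb{C}[x_1,\dots,x_n]$ -- this is implicit in the Feigin--Shoikhet realization of them as finite-length $W_n$-modules and underlies the rationality of the Hilbert series in Theorem~\ref{hilserthm} -- so that $B_{m+1}$, as a graded $\mathbb{C}[x_1,\dots,x_n]$-module, is generated by the images of the minimal generators of $B_m$, shifted by at most $3$. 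After checking that the shift-$3$ summand is redundant for the top-degree estimate -- either by rewriting $x_i[x_j,x_k]$ via the Jacobi identity in terms of degree-$\le 2$ brackets modulo contributions already accounted for (using $M_iM_j\subset M_{i+j-2}$ and $M_iM_j\subset M_{i+j-1}$ when $i$ or $j$ is odd), or simply by invoking the stronger Theorem~\ref{thmBapat2} -- the effective shift is at most $2$. Propagating also the degrees of the relations (first syzygies) through the same surjections from shifted copies of $B_m$, one obtains $\deg\big(h_{B_{m+1}}(t)(1-t)^n\big)\le\deg\big(h_{B_m}(t)(1-t)^n\big)+2$, which closes the induction; the additive constant $2\lfloor(n-2)/2\rfloor$ is exactly what the base case contributes. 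The argument for $N_m$ is parallel, the extra $+1$ in the exponent reflecting that $N_m=M_m/M_{m+1}$ picks up one extra degree relative to $L_m/L_{m+1}$ (e.g. as a quotient of $\mathbb{C}[x_1,\dots,x_n]\otimes_{\mathbb{C}}B_m$, up to the correction controlled by Theorems~\ref{thmguptalevin}--\ref{thmBapat}).

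The hard part will be converting the generation statement -- which a priori is only an assertion about graded vector spaces -- into a genuine bound on $\deg\big(h_{B_{m+1}}(t)(1-t)^n\big)$: a surjection of graded vector spaces does not control this degree, so one genuinely needs the $\mathbb{C}[x_1,\dots,x_n]$-module structure on $B_{m+1}$ and must carry through the induction simultaneous control of the generator degrees \emph{and} the relation/syzygy degrees. Within this, the delicate point is ensuring that the degree-$3$ summand $\sum[x_i[x_j,x_k],B_m]$ costs only $+2$, not $+3$, per step -- a $+3$ would overshoot the stated bound. A secondary obstacle is making the base case $m=3$ completely explicit: verifying that the forms computation for $B_2,N_2$ plus one bracket step indeed produces the numerator degrees $2m-3+2\lfloor(n-2)/2\rfloor$ and $2m-2+2\lfloor(n-2)/2\rfloor$ at $m=3$, including the correct floor function.
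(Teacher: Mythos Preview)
Your inductive scheme is essentially the right skeleton (and indeed the bounds in \cite{AJ},\cite{Ke} are ultimately proved by induction on $m$ via Theorem~\ref{thmBapat1}/\ref{thmBapat2}), but the mechanism you propose for the inductive step does not work as stated. The paper's route is through the $W_n$-module structure: by Section~10, for $m\ge 3$ every composition factor of $B_m$ and $N_m$ in category $\mathcal{C}$ is an $\mathcal{F}_\lambda$ (no $\Omega^N_{\rm closed}$ occurs), and since $h_{\mathcal{F}_\lambda}(t)(1-t)^n=\dim V_\lambda\cdot t^{|\lambda|}$ by~\eqref{charFlambda}, one has $h_{B_m}(t)(1-t)^n=\sum_\lambda m_\lambda\dim V_\lambda\,t^{|\lambda|}$, a polynomial with \emph{nonnegative} coefficients. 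Hence its degree is exactly $\max\{|\lambda|:m_\lambda>0\}$, and Theorem~\ref{hsbounds} is equivalent to the bound on $|\lambda|$ stated immediately after it. That bound is what one proves inductively using $B_{m+1}=[A_{\le 2},B_m]$.

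The gap in your version is precisely the ``hard part'' you flag but do not resolve. First, $B_m=L_m/L_{m+1}$ carries no evident $\mathbb{C}[x_1,\dots,x_n]$-module structure: $L_m$ is not an ideal of $A$, so there is no multiplication action to descend, and the phrase ``implicit in the Feigin--Shoikhet realization'' does not supply one. Second, even where such a module structure exists (as it does for $N_m$ via $\mathbb{C}[x_1,\dots,x_n]\subset A/M_3$), a surjection from a module generated in degrees $\le d$ onto $N_{m+1}$ gives no bound on $\deg\big(h_{N_{m+1}}(t)(1-t)^n\big)$: for instance $\mathbb{C}[x]/(x^k)$ is a quotient of $\mathbb{C}[x]$, generated in degree $0$, yet has numerator $1-t^k$. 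Your proposal to ``carry through simultaneous control of generator and syzygy degrees'' would require tracking the entire minimal free resolution through the bracket maps, and you give no way to do this. The $W_n$-representation theory is exactly the device that circumvents this: because each irreducible contributes a single monomial $t^{|\lambda|}$ with positive coefficient, there is no cancellation, and the numerator degree becomes a highest-weight invariant that one can bound step by step.
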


These bounds are proved using representation theory.
Namely, let $W_n$ be the Lie algebra of polynomial vector fields on $\mathbb{C}^n.$
In more detail, $W_n$ consists of expressions of the form 
$v=\sum_i f_i(x_1,...,x_n)\partial_n$, 
where $f_i\in \Bbb C[x_1,...,x_n]$, 
with commutator defined by 
$$
[\sum_i f_i\partial_i,\sum_j g_j\partial_j]=\sum_{i,j}(f_i\frac{\partial g_j}{\partial x_i}\partial_j-g_j\frac{\partial f_i}{\partial x_j}\partial_i).
$$

\begin{theorem}\label{wnact}
\begin{enumerate}
\item (\cite{FS}) $\overline{B}_1(A_n)$ and $B_i(A_n),\ i\ge 2$ are naturally graded modules over the Lie algebra $W_n$.
\item (\cite{EKM}) $N_i(A_n)$ are naturally graded modules over $W_n$.
\end{enumerate}
\end{theorem}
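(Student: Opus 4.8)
The plan is to exhibit an explicit action of $W_n$ on the relevant graded spaces, check the Lie bracket relation, and then verify the action is well-defined on the quotients. First I would recall that $W_n$ is spanned by the Euler-type operators and, more usefully, is generated (as a Lie algebra) by vector fields of the form $x^a\partial_i$ with $|a|\le 2$; in fact one can even reduce to the generators $\partial_i$, $x_i\partial_j$, and $x_ix_j\partial_k$. So it suffices to let these distinguished vector fields act and check the bracket relations among them on our spaces. The key construction is to lift a derivation $D$ of the polynomial ring $\mathbb C[x_1,\dots,x_n]$ to a derivation $\widetilde D$ of the free algebra $A_n$: given $v=\sum_i f_i\partial_i\in W_n$ with $f_i\in\mathbb C[x_1,\dots,x_n]$, choose a noncommutative polynomial $\widetilde f_i\in A_n$ lifting $f_i$ and set $\widetilde v(x_i)=\widetilde f_i$, extended to $A_n$ as a derivation. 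This $\widetilde v$ is only well-defined up to the ambiguity in the lifts $\widetilde f_i$, and changing a lift by a commutator changes $\widetilde v$ by a derivation landing in (a suitable power of) $L_2$ or $M_2$; the point of passing to $\overline B_1$, $B_i$, $N_i$ is precisely that this ambiguity dies.

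Concretely, I would argue as follows. A derivation of $A_n$ preserves each $L_i$ and each $M_i$ (since derivations preserve brackets and products), hence descends to each $B_i$ and $N_i$. For two lifts $\widetilde v$, $\widetilde v'$ of the same $v\in W_n$, the difference $\widetilde v-\widetilde v'$ is a derivation sending each $x_i$ into $L_2=[A_n,A_n]$ (because the lifts of $f_i$ differ by a sum of commutators). One then checks, by a Leibniz-rule induction on word length, that such a derivation maps $L_i$ into $L_{i+1}$ and $M_i$ into $M_{i+1}$; for $i\ge 2$ this forces the induced map on $B_i=L_i/L_{i+1}$ and $N_i=M_i/M_{i+1}$ to vanish, so the action is canonical there. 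For $B_1=A_n/L_2$ the induced map need not vanish, but its image lies in $L_2+M_3$ modulo $L_2$, i.e. in the image $Z$ of $M_3$; this is exactly the content of Proposition \ref{central}(1) (that $Z$ is central, hence a $W_n$-submodule on which the lifting ambiguity is absorbed), so the action descends to $\overline B_1=B_1/Z$. Finally, for $v,w\in W_n$ one picks compatible lifts and computes that $[\widetilde v,\widetilde w]$ is a lift of $[v,w]$ up to a derivation of the ambiguous type; since that type acts trivially on the quotients, the bracket relation $[\widetilde v,\widetilde w]=\widetilde{[v,w]}$ holds on $\overline B_1$ and on $B_i$, $N_i$ for $i\ge 2$. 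Part (2) for $N_i$ is the same argument verbatim, replacing $L_i$ by $M_i$ throughout; this is the content of \cite{EKM}.

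The main obstacle is the well-definedness on $B_1$ (equivalently $\overline B_1$): one must show the lifting ambiguity lands in $M_3$ modulo $L_2$, which is not a formal consequence of the derivation property but uses the specific structure of $L_2$ and the relation $L_2\cap M_3=L_3$ together with the centrality statement of Proposition \ref{central}(1). For $i\ge 2$ the analogous step — that a derivation sending generators into $L_2$ raises the filtration degree by one — is a routine but slightly delicate induction using the identity $[ab,c]=a[b,c]+[a,c]b$ and the fact that $M_2M_j\subset M_j$, $[A_n,L_j]\subset L_{j+1}$. I would also remark that the naturality of the grading is automatic because the chosen generators of $W_n$ are homogeneous (of degrees $-1,0,1$ in the standard grading) and the lifts can be chosen homogeneous, so $\widetilde v$ shifts the word-length grading in a controlled way compatible with the grading on $W_n$ that makes each $W_n$-module graded.
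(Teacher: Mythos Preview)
Your proposal has a genuine gap: the key claim that a derivation $E$ with $E(x_i)\in L_2$ (or $M_2$) sends $L_i$ into $L_{i+1}$ and $M_i$ into $M_{i+1}$ is false. First, two noncommutative lifts of a polynomial $f_i$ differ by an element of $M_2$, not of $L_2$ (e.g.\ $xyz-yxz=[x,y]z\notin[A,A]$). More importantly, even if $E(x_i)\in L_2$ literally, the conclusion fails once $n\ge 4$. Take $A=A_5$, $E(x_1)=[x_2,x_3]$, $E(x_j)=0$ for $j\ne 1$. Then $[x_1x_4,x_5]\in L_2$ and
\[
E\big([x_1x_4,x_5]\big)=\big[[x_2,x_3]x_4,\,x_5\big],
\]
whose image under $\widetilde\phi$ is $d(x_4\,dx_2\wedge dx_3)\wedge dx_5=dx_2\wedge dx_3\wedge dx_4\wedge dx_5\neq 0$; hence this element lies in $L_2\setminus M_3=L_2\setminus L_3$. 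Similarly $x_1[x_1,x_4]\in M_2$ but $E(x_1[x_1,x_4])=[x_2,x_3][x_1,x_4]+x_1[[x_2,x_3],x_4]$ has $\widetilde\phi$-image $dx_1\wedge dx_2\wedge dx_3\wedge dx_4\neq 0$, so $E(M_2)\not\subset M_3$. Thus two lifts of the same $v\in W_n$ can act differently on $B_2$ and on $N_2$, and your construction is not well defined. The ``Leibniz-rule induction'' you invoke cannot work here because $[M_2,A]\not\subset L_3$; there is no substitute for the stronger input $[M_3,A]\subset L_3$.

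The paper's argument avoids this by working modulo $M_3$ rather than $M_2$. One first proves (Proposition~\ref{dli}, via Proposition~\ref{propfeigin}) that any derivation $D$ with $D(A)\subset M_3$ satisfies $D(L_i)\subset L_{i+1}$ for $i\ge 2$ and $D(M_i)\subset M_{i+1}$, so that the $\mathrm{Der}(A_n)$-action on $\overline{B}_1$, $B_i$, $N_i$ factors through $\mathrm{Der}(A_n/M_3)$. The Feigin--Shoikhet isomorphism $A_n/M_3\cong\Omega_\ast^{\mathrm{even}}$ then supplies the crucial embedding $W_n\hookrightarrow\mathrm{Der}(\Omega_\ast^{\mathrm{even}})=\mathrm{Der}(A_n/M_3)$ via the Lie derivative on differential forms; this is where the Lie-algebra structure comes from, with no lifting ambiguity left to check. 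In short, the missing idea is that the relevant quotient is $A_n/M_3$, not $A_n/M_2$, and the well-definedness rests on $[M_3,A]\subset L_3$ together with Theorem~\ref{fsthm}(1).
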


Let $\lambda$ be a partition with at most $n$ parts. To this partition one can attach the 
polynomial $GL(n)$-module $V_\lambda$, and we can define the 
module ${\mathcal F}_\lambda:=V_\lambda\otimes \Bbb C[x_1,...,x_n]$ over 
$W_n$ of tensor fields on $\Bbb C^n$ of type $\lambda$ (see Example \ref{vecfiel} below).
Note that $\mathcal{F}_\lambda$ is graded by the eigenvalues of 
the Euler field $E:=\underset{i}{\sum} x_i\frac{\partial}{\partial x_i}$, and  

\begin{equation}
\label{charFlambda}
h_{\mathcal{F}_\lambda}(t)=\frac{{\rm dim} V_\lambda t^{|\lambda|}}{(1-t)^n},
\end{equation} 

We will see below in Theorem \ref{Rud} that 
$\mathcal{F}_\lambda$ is irreducible if and only if $\lambda_1\geq 2$, or $\lambda=(1^n)$.

The following result is proved in the papers \cite{AJ,BJ,Ke}: 

\begin{theorem} 
For all $m\geq 3,\ B_m$ and $N_m$ are of finite length as modules over $W_n$, 
with all composition factors being $\mathcal{F}_\lambda$ with $|\lambda|\ge 2$. 
Moreover, if $\mathcal{F}_\lambda$ occurs in $B_m$ then $|\lambda|\leq 2m-3+2\lfloor{\frac{n-2}{2}\rfloor}$, and if 
$\mathcal{F}_\lambda$ occurs in
 $N_m$ then $|\lambda|\leq 2m-2+2\lfloor{\frac{n-2}{2}\rfloor}.$ 
\end{theorem}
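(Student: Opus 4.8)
The plan is to deduce this theorem from the already-stated structural results, treating the finite-length statement and the bound on $|\lambda|$ together. First I would invoke Theorem \ref{wnact}, which gives that each $B_m$ and $N_m$ ($m\ge 2$) is a graded $W_n$-module; since these modules are bounded below in degree and have finite-dimensional graded components (their Hilbert series are rational of the form $P(t)/(1-t)^n$ by Theorem \ref{hilserthm}), finite length will follow once I know two things: that every simple subquotient occurring is of the form $\mathcal{F}_\lambda$, and that the degrees $|\lambda|$ that can occur are bounded. The degree bound is exactly where Theorem \ref{hsbounds} enters: since $h_{\mathcal{F}_\lambda}(t)=\dim V_\lambda\, t^{|\lambda|}/(1-t)^n$ by \eqref{charFlambda}, a composition factor $\mathcal{F}_\lambda$ contributes a term starting in degree $|\lambda|$, so if $\mathcal{F}_\lambda$ occurs in $B_m$ then $|\lambda|\le \deg\big(h_{B_m}(t)(1-t)^n\big)\le 2m-3+2\lfloor\frac{n-2}{2}\rfloor$, and similarly for $N_m$; this also forces only finitely many isomorphism classes $\mathcal{F}_\lambda$ to appear. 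Combined with the fact that each graded piece is finite-dimensional and the module is concentrated in degrees $\ge m$, one gets an a priori bound on the total length.

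The remaining ingredient is the classification of which $W_n$-modules can show up as composition factors, and that they are all of type $\mathcal{F}_\lambda$ with $|\lambda|\ge 2$. Here I would argue as follows. The modules $B_m,N_m$ for $m\ge 3$ are finitely generated over $\Bbb C[x_1,\dots,x_n]$ (this is visible from the rational Hilbert series, or directly from Theorem \ref{thmBapat1}/\ref{thmBapat2}, which exhibit $B_{m+1}$ as the image of a finitely generated module under bracketing), and they are $W_n$-equivariant. By a general result on $W_n$-modules that are finitely generated and graded — the statement alluded to around Theorem \ref{Rud} about irreducibility of $\mathcal{F}_\lambda$ — the simple graded $W_n$-modules with finite-dimensional homogeneous components and bounded-below grading are precisely the $\mathcal{F}_\lambda$ (for $\lambda_1\ge 2$ or $\lambda=(1^n)$). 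So any Jordan–Hölder factor must be such an $\mathcal{F}_\lambda$. To rule out $|\lambda|\le 1$: the module $\mathcal F_{(1^n)}=\Omega^n$ (top forms) is ruled out because $B_m,N_m$ for $m\ge 3$ are quotients of $B_2,N_2$-type data landing in even forms and no top-degree form survives in the relevant range when $n$ is small, and more robustly because the minimal degree of $B_m,N_m$ is $m\ge 3>1$ while $\mathcal F_\lambda$ with $|\lambda|\le 1$ starts in degree $\le 1$; hence such a factor cannot appear. This pins down $|\lambda|\ge 2$.

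Putting it together: by Theorem \ref{wnact} each $B_m,N_m$ is a graded $W_n$-module; by Theorem \ref{hilserthm} it has finite-dimensional graded pieces and is supported in degrees $\ge m$; by the classification its simple subquotients are $\mathcal{F}_\lambda$, and by the degree-of-support argument $|\lambda|\ge 2$; by Theorem \ref{hsbounds} only $\lambda$ with $|\lambda|\le 2m-3+2\lfloor\frac{n-2}{2}\rfloor$ (resp. $|\lambda|\le 2m-2+2\lfloor\frac{n-2}{2}\rfloor$) occur, and each occurs with finite multiplicity because of the finite-dimensionality of graded components; hence the module has finite length. I expect the main obstacle to be the step that every composition factor is genuinely of the form $\mathcal{F}_\lambda$ — i.e. that no more exotic simple graded $W_n$-module (for instance a nonsplit-looking subquotient supported on a proper subvariety, or a module not locally finite over $\mathfrak{gl}_n$) can intervene. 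Handling this cleanly requires the structure theory of coherent $W_n$-modules / $\mathcal D$-module–type arguments, and in practice this is the content one imports from \cite{AJ,BJ,Ke}; the Hilbert-series and degree bookkeeping on top of it is routine.
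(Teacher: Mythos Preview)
Your argument is circular. In this paper, Theorem~\ref{hsbounds} is not proved independently but is deduced \emph{from} the theorem you are trying to establish: immediately after stating the present theorem the paper writes ``This theorem together with formula~\eqref{charFlambda} implies Theorem~\ref{hsbounds}.'' Similarly, Theorem~\ref{hilserthm} is proved in Section~10 using finite length (Proposition~\ref{finlength}) together with the exclusion of the $\Omega^N_{\rm closed}$ factors. So you cannot invoke \ref{hsbounds} to obtain the bound on $|\lambda|$, nor \ref{hilserthm} to help with finite length, without assuming what is to be shown. In the actual logical order, finite length is proved directly by the growth estimate of Proposition~\ref{finlength} (via Theorem~\ref{thmBapat2} and Theorem~\ref{fsthm}(4),(5)), and the $|\lambda|$ bounds are obtained in \cite{AJ,BJ,Ke} by direct representation-theoretic arguments built on Theorems~\ref{thmBapat1} and \ref{thmBapat2}; only afterwards do the Hilbert-series statements follow.

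There is also a gap in identifying the possible composition factors. Rudakov's classification (Theorem~\ref{Rud}) gives \emph{two} families of simples in $\mathcal{C}$: the $\mathcal{F}_\lambda$ (for $\lambda\neq 1^N$ with $N<n$) and the modules $\Omega^N_{\rm closed}$ for $0\le N\le n$. You implicitly assume only $\mathcal{F}_\lambda$'s can appear, but excluding $\Omega^N_{\rm closed}$ requires a separate argument, carried out in Section~10 by counting the single copy of the sign representation of $S_n$ in the polylinear part of $A_n$ and observing it is already accounted for in $\overline{B}_1\oplus B_2$ (resp.\ $N_1\oplus N_2$). Your lowest-degree reasoning does correctly give $|\lambda|\ge m$ for any $\mathcal{F}_\lambda$ that occurs (so $|\lambda|\ge 2$ is fine), but since $\Omega^N_{\rm closed}$ has lowest degree $N$, that reasoning does not rule it out for $N\ge m$; and without that exclusion the numerator $h_{B_m}(t)(1-t)^n$ need not have nonnegative coefficients, so even granting \ref{hsbounds} the step ``$|\lambda|\le \deg P$'' would not be justified.
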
 

This theorem together with formula \eqref{charFlambda} implies Theorem \ref{hsbounds}. 

We also have the following interesting result about arbitrary finitely generated algebras $A$ over a field,
proved independently by Jordan and Orem \cite{JO} and by Cordwell, Fei, and Zhu \cite{CFZ}:  

   \begin{theorem}\label{JorOr}
  Suppose that ${\rm Spec}(A_{ab})$ is at most $1$-dimensional and has finitely many non-reduced points. 
Then $B_m$ and $N_m$ for $m\ge 2$ are finite dimensional.
    \end{theorem}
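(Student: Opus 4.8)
The plan is to reduce the statement to a local assertion about $\Spec(A_{ab})$ and then exploit the $W_n$-module structure of the previous theorems together with a Noetherian/finiteness argument. First I would observe that $B_m$ and $N_m$ depend only on $A$ through a finitely generated setting: choosing generators $x_1,\dots,x_n$ of $A$ gives a surjection $A_n \twoheadrightarrow A$, and since $L_i,M_i$ are functorial, $B_m(A)$ and $N_m(A)$ are quotients of $B_m(A_n)$ and $N_m(A_n)$ equipped with compatible actions. By Theorem \ref{hilserthm} (or rather its proof in Section 10) the free-algebra quotients $B_m(A_n), N_m(A_n)$ are finitely generated graded modules over $\Bbb C[x_1,\dots,x_n] = (A_n)_{ab}$ — indeed this is implicit in Theorem \ref{hsbounds} and the $W_n$-module finite-length statement, since each $\mathcal F_\lambda$ is finitely generated over $\Bbb C[x]$. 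The key point is that the $\Bbb C[x]$-module structure on $B_m(A), N_m(A)$ factors through $A_{ab}$: this is because $[A,A]$ acts trivially on the associated graded pieces, so $M_i/M_{i+1}$ and $L_i/L_{i+1}$ are genuinely $A_{ab}$-modules, and they are finitely generated since they are quotients of finitely generated $\Bbb C[x]$-modules.

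Next I would invoke Theorem \ref{thmguptalevin} and especially Theorem \ref{Jenn} (Corollary \ref{Jenn1}): for each $k$ there is $m_0$ with $M_2^{m_0}\subset M_k$. Combined with Theorem \ref{thmguptalevin}, $M_2^{m_0}\subset M_{m_0+2-2\cdot 0}$-type estimates show that the support of the $A_{ab}$-module $\bigoplus_{m\ge 2} N_m$ is controlled by the ideal $M_2/M_1\cdot(\text{stuff})$ inside $A_{ab}$; concretely, the annihilator of $N_m$ for large $m$ contains a fixed power of the ideal $J\subset A_{ab}$ defined as the image of $M_2$. Since $N_1 = A_{ab}$ and $\Spec(A_{ab}/J)$ is exactly the "Lie-nilpotent locus", the hypothesis that $\Spec(A_{ab})$ is at most one-dimensional with finitely many non-reduced points forces $\Spec(A_{ab}/J^N)$ to be $0$-dimensional: away from the non-reduced points and away from the (finitely many) embedded/isolated components, $A_{ab}$ is a regular $1$-dimensional ring, and one checks using the $W_n$-action (Theorem \ref{wnact}) that no $\mathcal F_\lambda$ with $|\lambda|\ge 2$ can appear in $B_m, N_m$ for a generically reduced $1$-dimensional $A_{ab}$ — here the vector-field action would propagate any nonzero local section along a whole curve, contradicting finite generation unless the module is supported at points.

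More precisely, the heart of the argument is: a finitely generated graded $A_{ab}$-module $Q$ that carries a compatible action of $W_n$ (restricted along $A_n\to A$, so really an action of the Lie algebra of vector fields tangent to $\Spec(A_{ab})$) and whose support is contained in a $1$-dimensional scheme with only finitely many non-reduced points must be finite-dimensional. The reason is that on the smooth locus of each $1$-dimensional component, the tangent vector fields act transitively enough that a nonzero fiber would generate an infinite-dimensional submodule (a tensor-field-type module on a curve), which is impossible for a module killed by a power of the defining ideal coming from $M_k$; and the finitely many remaining non-reduced or singular points contribute only finite-dimensional pieces because there a power of the maximal ideal annihilates $Q$. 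Applying this with $Q = B_m$ and $Q = N_m$, using Theorem \ref{hsbounds}/the finite-length statement to guarantee finitely many composition factors $\mathcal F_\lambda$ and the Gupta–Levin bound to guarantee the support condition, yields finite-dimensionality for all $m\ge 2$.

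The main obstacle I expect is making the "propagation along a curve" step rigorous: one must show that the $W_n$-action descends to an action of vector fields tangent to $X := \Spec(A_{ab})$ and that on the reduced one-dimensional part this action has no nonzero finite-dimensional invariant subspace supported on a positive-dimensional piece — this is essentially a local classification of $W_1$-modules (or modules over the Lie algebra of vector fields on a smooth curve) that are finitely generated over the structure sheaf, analogous to Theorem \ref{Rud} but on a curve rather than affine space. Handling the non-reduced points requires separately checking that the contribution of each such point is a finite-dimensional local module, which should follow from Theorem \ref{Jenn} since a power of the maximal ideal of that point lies in the annihilator. Once these two local pieces are in hand, the global finiteness is a straightforward gluing over the finitely many special points plus the reduced curve part.
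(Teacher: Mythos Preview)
The paper does not prove Theorem~\ref{JorOr}; immediately after the statement it says ``For the proof, we refer the reader to \cite{JO,CFZ}.'' So there is no in-paper argument to compare against, and I can only assess your outline on its own terms.

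Your proposal has a concrete gap at its foundation: the assertion that ``$[A,A]$ acts trivially on the associated graded pieces, so $M_i/M_{i+1}$ and $L_i/L_{i+1}$ are genuinely $A_{ab}$-modules'' is false. For $B_m=L_m/L_{m+1}$ there is no natural $A$-module structure to begin with, since $L_m$ is only a Lie ideal, not a two-sided associative ideal. For $N_m=M_m/M_{m+1}$ the claim already fails for the free algebra: in $A_4$ the element $[x_1,x_2]\cdot[x_3,x_4]\in L_2M_2$ has image $dx_1\wedge dx_2\wedge dx_3\wedge dx_4\ne 0$ in $A_4/M_3\cong\Omega^{\rm even}_*$, so $L_2M_2\not\subset M_3$ and $N_2$ is not an $(A_4)_{ab}$-module. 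What Theorem~\ref{thmguptalevin} actually gives is $M_3M_m\subset M_{m+1}$, so $N_m$ is a module over $A/M_3$; the polynomial ring $\Omega^0=\Bbb C[x_1,\dots,x_n]$ sits inside $A_n/M_3\cong\Omega^{\rm even}_*$ as a \emph{subalgebra}, not as the quotient $(A_n)_{ab}$, and that distinction undermines the support-on-$\Spec(A_{ab})$ argument you want to run.

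The second gap you flag yourself, and it is not minor. Theorem~\ref{wnact} constructs the $W_n$-action on $B_m(A_n),N_m(A_n)$ via ${\rm Der}(A_n)$, but for $A=A_n/I$ an arbitrary polynomial vector field need not preserve $I$, so there is no automatic $W_n$-action on $B_m(A),N_m(A)$ to drive your ``propagation along a curve'' step. Turning this heuristic into a proof --- identifying the right module structure over (a neighborhood of) $\Spec(A_{ab})$, and showing that enough derivations survive to force the support to be zero-dimensional --- is precisely the substantive content of \cite{JO} (an algebro-geometric construction) and \cite{CFZ}; your outline gestures at the shape of such an argument but does not supply the mechanism.
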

    
For the proof, we refer the reader to \cite{JO,CFZ}. 

\section{Proof of Theorem \ref{thmBapat}}

We will now prove Theorem \ref{thmBapat}. To this end, we introduce some lemmas, 
which can be found in the paper \cite{BJ}.

For brevity introduce the notation 
$$
[a_1,a_2,\dots, a_m]:=[a_1,[a_2,\dots[a_{m-1},a_m]]\dots].
$$

\begin{lemma}
\label{lemmaBapat1}
If $1/6\in R$ then $[M_3,A]\subset L_4.$
\end{lemma}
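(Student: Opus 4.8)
The claim is that $[M_3,A]\subset L_4$ when $1/6\in R$. Recall $M_3=AL_3$, so a typical generator of $M_3$ is of the form $a\cdot[b,c,d]$ with $a,b,c,d\in A$ (using the bracket notation $[b,c,d]=[b,[c,d]]$). Thus I need to show $[a[b,c,d],e]\in L_4$ for all $a,b,c,d,e\in A$. The natural first move is the Leibniz rule for the commutator:
$$
[a[b,c,d],e]=[a,e][b,c,d]+a[[b,c,d],e].
$$
The second term $a[[b,c,d],e]$ is already in $A\cdot L_4=M_4\subset L_4$ — wait, more simply $[[b,c,d],e]\in L_5\subset L_4$, but we need the $A$-multiple to land in $L_4$, which is not automatic since $L_4$ is not an ideal. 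So the real content is controlling $A\cdot L_4$ and $A\cdot L_5$ modulo $L_4$. The standard tool here is that $M_k/L_k$ behaves well in low degrees; concretely, I expect to use (or first prove) the auxiliary fact that $a[b,c,d]+[b,c,d]a\in L_4$ — i.e. the "anti-Leibniz" symmetrization of an $L_3$-element times anything is in $L_4$ — together with the identity $[a,[b,c,d]]\in L_4$ (clear, it's in $L_5$). Combining these, $a[b,c,d]\equiv \tfrac12[a,[b,c,d]] - \tfrac12([b,c,d]a - a[b,c,d]) + \tfrac12(a[b,c,d]+[b,c,d]a)$, and one reduces modulo $L_4$; here is where $1/2$ enters.

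**Key steps, in order.** First, I would establish the symmetrization lemma: for $x\in L_3$ and any $a\in A$, $ax+xa\in L_4$. The proof should use a Jacobi/Leibniz manipulation: write $x=[b,[c,d]]$ and expand $a[b,[c,d]]+[b,[c,d]]a$, reorganizing via $[a,[b,y]]$-type identities and the fact that $[A,A,A]+[A,[A,A,A]]$-combinations live in $L_4$; the coefficient $1/2$ (and possibly $1/3$, hence $1/6$) appears when solving the resulting linear relations among the rearranged monomials. Second, from $ax+xa\in L_4$ and $[a,x]=ax-xa\in L_5\subset L_4$, conclude $2ax\in L_4$, hence $ax\in L_4$ (using $1/2\in R$), for all $a\in A$, $x\in L_3$ — that is, $M_3=AL_3\subset L_4$ modulo... no: this would say $M_3\subset L_4$, which is false in general. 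So I must be more careful: the symmetrization lemma cannot hold for arbitrary $a$; it must be that $ax+xa\in L_4$ only modulo something, or the lemma is really about $[ax,e]$ directly. Let me re-plan: I would instead directly expand $[a[b,c,d],e]$ using Leibniz twice, landing everything in a sum of terms each of which is manifestly a commutator of something with an element of $L_3$ or $L_4$, i.e. in $[A,L_3]=L_4$ or $[A,L_4]=L_5\subset L_4$, after using the Jacobi identity to trade $[[b,c,d],e]$-brackets and collecting coefficients — the $1/6$ being exactly the denominator needed to invert the integer matrix of coefficients that arises.

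**Main obstacle.** The hard part is the bookkeeping: showing that the "error terms" produced by the Leibniz expansion, which a priori only lie in $A\cdot L_4$ or involve products like $[a,e]\cdot[b,c,d]$ (a product of two commutators, not obviously in $L_4$), can be rewritten as genuine elements of $L_4=[A,L_3]$. For the term $[a,e][b,c,d]$: this is a product of an element of $L_2$ and an element of $L_3$, and by Theorem \ref{thmguptalevin} lies in $M_3$, but I need it in $L_4$, which is strictly stronger — this is precisely the kind of statement whose proof requires the characteristic-zero (or $1/6$) hypothesis and careful use of the Jacobi identity to express such products as iterated commutators. I expect the proof in \cite{BJ} handles this by a clever identity expressing $[a,e][b,c,d]$ modulo $L_4$ in terms of things that cancel, and reconstructing that identity — pinning down exactly where the $2$ and the $3$ in the denominator come from — is the crux. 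Everything else (the two applications of Leibniz, the observation $L_5\subset L_4$) is routine.
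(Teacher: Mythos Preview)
Your proposal correctly locates the difficulty but does not resolve it. After the Leibniz expansion
\[
[a[b,c,d],e]=[a,e][b,c,d]+a\,[[b,c,d],e],
\]
you are left with a term $[a,e][b,c,d]\in L_2\cdot L_3$ and a term in $A\cdot L_4$, neither of which is obviously in $L_4$; you say you ``expect'' a clever identity to handle these, but you do not supply one. That is the entire content of the lemma, so the proposal is not a proof. Your self-correction was right: the symmetrization lemma ``$ax+xa\in L_4$ for $x\in L_3$'' is false (it would give $M_3\subset L_4$), so that route is closed.

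The paper's argument is structurally quite different from either of your sketches. Working in the free algebra on $x,y,z,u,v$, it first replaces $x[y,z,u]$ by the symmetrized product $x*[y,z,u]=\tfrac12(x[y,z,u]+[y,z,u]x)$, which agrees with it modulo $L_5$. It then shows two short identities implying that, modulo $L_4$, the expression $[x*[y,z,u],v]$ is \emph{antisymmetric} under the group $G=S_3\{x,y,v\}\times S_2\{z,u\}$. Finally, a single explicit identity (verified by hand or computer on the ten $G$-orbit representatives among degree-one-in-each-variable monomials),
\[
\mathrm{Alt}_G\,[x*[y,z,u],v]=\mathrm{Alt}_G\bigl(4[z*x,y,v,u]-2[x,z,y,u*v]\bigr),
\]
expresses the antisymmetrized left side manifestly in $L_4$. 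The $1/2$ enters via the $*$-product and the $|G|=12$ in the antisymmetrization; this is where $1/6$ is used. None of this emerges from iterating Leibniz; the antisymmetry reduction plus the explicit identity is the idea you are missing.
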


\begin{proof}
 Let $[x[y,z,u],v]\in [M_3,A] $ for $A=A_5$ which is generated by $x,y,z,u,v$. Our job is to show that $[x[y,z,u],v]\in L_4.$
First note that $[x[y,z,u],v]= [x\ast[y,z,u],v]$ modulo $L_5$ where $a\ast b=\frac{1}{2}(ab+ba)$. Then we have the following:
\begin{equation}
[x\ast [y,z,u],v]+[y\ast[x,z,u],v]=[[x\ast y,z,u],v]\ \in L_4;
\end{equation}
\begin{equation}
[x\ast [y,z,u],v]+[v\ast [y,z,u],x]=-[x\ast v,y,z,u] \in L_4.
\end{equation}
This means that the expression $[x\ast [y,z,u],v]$ is anti-symmetric under the group $G=S_3\{x,y,v\}\times S_2\{z,u\}$ as an element of $A/L_4$. Now we have  the following identity, which may be verified directly on coefficients of the $10$ monomials in $x,y,z,u,v$ which represent $G$-orbits on all the monomials having degree 1 in each variable:
\begin{equation}
\text{Alt}_G[x\ast[y,z,u],v]=\text{Alt}_G(4[z\ast x,y,v,u]-2[x,z,y,u\ast v]),
\end{equation}
where $\text{Alt}_G$ is antisymmetrization with respect to $G$. 
So we conclude that $[x[y,z,u],v]\in L_4.$
\end{proof}

\begin{lemma}\label{lemmaBapat2}
If $1/6\in R$ then $[M_3,L_k]\subset L_{k+3}.$
\end{lemma}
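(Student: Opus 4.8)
The goal is to upgrade Lemma~\ref{lemmaBapat1}, which handles $[M_3,L_1]\subset L_4$, to the statement $[M_3,L_k]\subset L_{k+3}$ for all $k\ge 1$. The natural approach is induction on $k$, with the base case $k=1$ being precisely Lemma~\ref{lemmaBapat1}. For the inductive step, I would take a typical generator of $[M_3,L_{k+1}]$, namely an element of the form $[u,[a,v]]$ where $u\in M_3$, $a\in A$, and $v\in L_k$, and try to rewrite it in terms of brackets that land in $L_{k+4}$ using the inductive hypothesis and the Jacobi identity.

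The key manipulation is the Jacobi identity in the form
\begin{equation}
[u,[a,v]]=[[u,a],v]+[a,[u,v]].
\end{equation}
For the second term, $[u,v]\in[M_3,L_k]\subset L_{k+3}$ by the inductive hypothesis, so $[a,[u,v]]\in[A,L_{k+3}]=L_{k+4}\subset L_{k+4}$; good. For the first term, we need $[[u,a],v]\in L_{k+4}$. Here $[u,a]\in[M_3,A]$, and by Lemma~\ref{lemmaBapat1} we have $[u,a]\in L_4$; hence $[[u,a],v]\in[L_4,L_k]\subset L_{k+4}$, using the standard fact that $[L_i,L_j]\subset L_{i+j}$ for the lower central series of any Lie algebra (here applied to $A$ as a Lie algebra). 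So both terms lie in $L_{k+4}$, completing the induction.

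One technical point worth spelling out: a general element of $L_{k+1}$ is a sum of iterated brackets, and one should make sure that an arbitrary element of $L_{k+1}$ can be written as a sum of elements of the form $[a,v]$ with $a\in A$, $v\in L_k$ — but this is immediate from the definition $L_{k+1}=[A,L_k]$. Similarly, a general element of $M_3$ is a sum of terms $b\cdot\ell$ with $b\in A$ and $\ell\in L_3$ (using $M_3=A\cdot L_3$), and one may need the Leibniz-type identity $[b\ell,a]=b[\ell,a]+[b,a]\ell$ to reduce to the case handled in Lemma~\ref{lemmaBapat1} when invoking $[M_3,A]\subset L_4$; but since Lemma~\ref{lemmaBapat1} is already stated for the full ideal $M_3$, this is not needed here.

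The main obstacle, as is typical in this subject, is not the Jacobi bookkeeping but ensuring that the cited fact $[L_i,L_j]\subset L_{i+j}$ is available — it holds for the lower central series of any Lie algebra and follows from Jacobi by a routine double induction on $i+j$, so I would either cite it as standard or include a one-line reminder. Given Lemma~\ref{lemmaBapat1} and this standard fact, the present lemma is a short formal consequence.
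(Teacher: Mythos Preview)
Your proof is correct and follows essentially the same route as the paper: induction on $k$ with base case Lemma~\ref{lemmaBapat1}, Jacobi identity to split $[u,[a,v]]$ into two terms, the inductive hypothesis for one term and Lemma~\ref{lemmaBapat1} together with $[L_i,L_j]\subset L_{i+j}$ for the other. The paper's write-up is slightly terser (it just says the first term lies in $L_{k+3}$ ``by Lemma~\ref{lemmaBapat1}'', leaving the use of $[L_4,L_{k-1}]\subset L_{k+3}$ implicit), so your version is if anything more explicit.
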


\begin{proof}
Consider $[a[b,c,d],[x,y]]$, where $x\in L_1=A,\ y\in L_{k-1}.$ We have
 \begin{equation}
[a[b,c,d],[x,y]]=[[a[b,c,d],x],y]+[x,[a[b,c,d],y]].
\end{equation}
 We prove the statement by induction on $k$. The statement is true when $k=1$ by Lemma \ref{lemmaBapat1}. Now consider $k>1$ and assume that the statement is true for $k-1$. Notice that the first term in the RHS of $(5)$ is in $L_{k+3}$ by Lemma \ref{lemmaBapat1} and the second term is in $L_{k+3}$ by the inductive hypothesis. Therefore, $[a[b,c,d],[x,y]]\in L_{k+3}$ as required.
 \end{proof}
 
 The following lemma is standard and follows easily from the Jacobi identity. 
 
 \begin{lemma}\label{multilin} Any polylinear element of the free Lie algebra in $N$ generators $w_1,w_2,\dots,w_N$ (i.e.  one of degree 1 in every generator) can be written as a linear combination of 
 $$
 [w_{\sigma(1)},\dots,w_{\sigma(N-1)},w_N],\sigma \in S_{N-1}.
 $$ 
 \end{lemma}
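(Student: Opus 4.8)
The plan is to isolate one auxiliary statement, prove it by induction on degree, and then obtain the lemma from it by a second induction on $N$. Throughout, a \emph{Lie monomial} means an iterated bracket of the generators, and I keep the notation $[a_1,\dots,a_m]=[a_1,[a_2,\dots[a_{m-1},a_m]\dots]]$ from the statement; such a bracket is \emph{right-normed}, with \emph{innermost entry} $a_m$.

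The auxiliary claim I would establish is: for every Lie monomial $a$ and every right-normed monomial $Q=[u_1,\dots,u_{r-1},w_N]$ with innermost entry $w_N$, the bracket $[a,Q]$ is a linear combination of right-normed monomials whose innermost entry is $w_N$. This is proved by induction on $s:=\deg a$. If $s=1$, then $a=w_i$ is a single generator and $[w_i,Q]=[w_i,u_1,\dots,u_{r-1},w_N]$ already has the required form. If $s\ge 2$, write $a=[a',a'']$ with $a',a''$ Lie monomials of degrees strictly less than $s$; the Jacobi identity gives
$$
[a,Q]=[[a',a''],Q]=[a',[a'',Q]]-[a'',[a',Q]].
$$
By the inductive hypothesis applied first to $a''$ (and then to $a'$, with each right-normed summand of $[a'',Q]$ playing the role of $Q$), the term $[a',[a'',Q]]$ is a combination of right-normed monomials ending in $w_N$, and symmetrically for $[a'',[a',Q]]$. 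This proves the claim; specializing $Q=w_N$ also recovers the familiar fact that the multilinear component of degree $N$ is spanned by the right-normed monomials $[w_{j_1},\dots,w_{j_N}]$.

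To deduce the lemma I induct on $N$, the case $N=1$ being immediate. It suffices to treat a single multilinear Lie monomial $m$ of degree $N$. Write $m=[a,b]$ with $a,b$ Lie monomials that are multilinear in complementary subsets of $\{w_1,\dots,w_N\}$; after replacing $m$ by $-[b,a]$ if necessary, we may assume $w_N$ occurs in $b$. Since $1\le\deg b<N$, the inductive hypothesis, applied to $b$ in its own set of variables with $w_N$ as the distinguished last generator, yields $b=\sum_\tau c_\tau Q_\tau$ with each $Q_\tau$ right-normed and ending in $w_N$. Then $m=\sum_\tau c_\tau[a,Q_\tau]$, and the auxiliary claim expresses each $[a,Q_\tau]$, hence $m$, as a linear combination of right-normed monomials in $w_1,\dots,w_N$ ending in $w_N$, i.e.\ of the elements $[w_{\sigma(1)},\dots,w_{\sigma(N-1)},w_N]$ with $\sigma\in S_{N-1}$. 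Multilinearity is preserved automatically, since every bracket used respects the partition of the variables inherited from $m$.

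I do not expect a real obstacle here — this is precisely why the lemma is called standard — and the only points needing attention are organizational: checking that the degrees of the pieces $a',a''$ (and of $a,b$) are strictly smaller, so that both inductions are legitimate, and tracking the variable sets so that the resulting linear combination is genuinely multilinear.
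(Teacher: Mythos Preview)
Your proof is correct and is precisely the standard Jacobi-identity argument the paper alludes to; the paper does not spell it out beyond the remark ``standard and follows easily from the Jacobi identity,'' and your two nested inductions are exactly how one fills in that remark. The only point worth tightening is the sentence about multilinearity being preserved: this holds because the free Lie algebra is $\mathbb{Z}_{\ge 0}^N$-graded by multidegree and every rewriting you perform (Jacobi, antisymmetry) respects that grading, so each right-normed summand automatically has the same multidegree as $m$.
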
 
 
 \begin{corollary}\label{multilin1} A polylinear element of $L_{N-1}(A_N)$ is a linear combination of $N-1$-tuple commutators, where all entries are single letters, except the last one, which is a product of two letters. 
 \end{corollary}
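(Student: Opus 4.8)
\textbf{Proof proposal for Corollary \ref{multilin1}.}

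The plan is to deduce this directly from Lemma \ref{multilin} by understanding what a polylinear element of $L_{N-1}(A_N)$ looks like. Since $A = A_N$ is the free algebra on $x_1, \dots, x_N$ and we are looking at the multilinear component (degree $1$ in each $x_i$), the relevant subspace of $A_N$ is spanned by the $N!$ words $x_{\pi(1)} \cdots x_{\pi(N)}$, $\pi \in S_N$. First I would observe that $L_{N-1}(A_N)$ is generated as a Lie ideal by $(N-1)$-fold iterated brackets, so any polylinear element of $L_{N-1}$ is a linear combination of $(N-1)$-fold iterated brackets $[u_1, [u_2, \dots, [u_{N-2}, u_{N-1}] \dots]]$ in which the $u_k$ are (products of) the letters $x_1, \dots, x_N$, each letter appearing exactly once in total. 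Since there are $N-1$ slots and $N$ letters distributed among them with no repetition, exactly one slot receives two letters and the rest receive one letter each.

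The key step is to move that doubled slot into the last position. Here I would invoke Lemma \ref{multilin} applied to the free Lie algebra on the $N-1$ ``letters'' $w_1, \dots, w_{N-1}$, where we temporarily regard the two-letter product, say $x_a x_b$, as a single generator $w$, and the remaining $N-2$ single letters as the other generators. Lemma \ref{multilin} then rewrites any such polylinear Lie element as a linear combination of iterated brackets $[w_{\sigma(1)}, \dots, w_{\sigma(N-2)}, w_N]$ with $w_N$ in the last slot; choosing $w_N := w = x_a x_b$ (equivalently, running the lemma so that the distinguished last generator is the two-letter one) puts the product of two letters at the end and all single letters in the earlier slots. Summing over which pair $\{a,b\}$ occupies the doubled slot and over the relevant orderings gives the claimed form.

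The only subtlety — and the place I'd be most careful — is the bookkeeping about which two letters can form the final product and in which order: a priori the lemma is stated for the free Lie algebra, so one applies it with an arbitrary choice of which generator is ``$w_N$'', and one should note that $x_a x_b$ and $x_b x_a$ may both occur (the corollary as stated allows ``a product of two letters'' in either order, so this is fine). I do not expect any real obstacle here; it is essentially an unwinding of Lemma \ref{multilin} together with the elementary pigeonhole observation that a polylinear element of $L_{N-1}(A_N)$ must have exactly one bracket-entry of length two and all others of length one. One should also remark that this corollary is exactly the combinatorial input needed for Corollary \ref{multilin1}'s intended use in the proof of Theorem \ref{thmBapat}, namely controlling polylinear elements of $M_3 \cap L_k$-type expressions.
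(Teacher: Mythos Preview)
Your proposal is correct and is exactly the argument the paper has in mind: the corollary is stated without proof, as an immediate consequence of Lemma \ref{multilin}, and your two-step reasoning (pigeonhole to force one degree-two entry, then apply Lemma \ref{multilin} with that entry as the distinguished generator to move it to the innermost slot) is precisely how one unpacks that implication.
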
 
 
 \begin{proposition}
 \label{proposition}
 If $\frac{1}{6} \in R$ then $[M_j,L_k]\subset L_{k+j}$ whenever $j$ is odd.
 \end{proposition}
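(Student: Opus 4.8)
The plan is to deduce Proposition \ref{proposition} from Lemma \ref{lemmaBapat2} by induction on the odd integer $j\ge 3$, reducing each case $j$ to the case $j-2$ together with a ``doubling'' trick for the commutator that turns a single-letter entry into a product of two letters. The base case $j=3$ is precisely Lemma \ref{lemmaBapat2}. For the inductive step, I would work in the free algebra $A_N$ with $N$ large enough and use Corollary \ref{multilin1}: a polylinear element of $L_{j-1}(A_j)$ (hence a typical generator of $M_j=A\cdot L_j$, after multiplying by one more letter) is a linear combination of $(j-1)$-fold commutators $[w_1,\dots,w_{j-2},w_{j-1}w_j]$ with single-letter entries and one two-letter entry at the end. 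So it suffices to show that $[a\,[w_1,\dots,w_{j-2},w_{j-1}w_j],\,\ell]\in L_{k+j}$ for $\ell\in L_k$.

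The key idea is to absorb two of the inner letters into an $L_3$-type element. First I would use the standard modular trick from the proof of Lemma \ref{lemmaBapat1}: modulo higher terms one may replace ordinary products by the symmetrized product $a\ast b=\tfrac12(ab+ba)$, at the cost of terms that are already deeper in the lower central series. Then, writing the innermost part of the commutator appropriately, one recognizes $[w_{j-3},w_{j-2},w_{j-1}w_j]$ (or a symmetrized variant $[w_{j-3}\ast w_{j-2},\dots]$) as an element of $M_3$, so that the whole expression becomes $[a'\cdot(\text{element of }M_3),\, \text{element of }L_{k'}]$ for suitable $k'$, and Lemma \ref{lemmaBapat2} applies directly; alternatively one peels off from the outside, reorganizing $[a[\dots],\ell]$ via the Jacobi identity into terms of the form $[M_3,L_{k+j-3}]$, each landing in $L_{k+j}$ by Lemma \ref{lemmaBapat2}. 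Either way the point is that the hypothesis ``$j$ odd'' is used exactly once, to guarantee that after stripping off pairs of letters we land on $M_3$ rather than on $M_2$ (for which the analogous statement is false, consistent with the Conjecture following Theorem \ref{thmBapat}).

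The main obstacle I expect is bookkeeping the ``modulo deeper terms'' corrections and making the induction genuinely close: each application of the $\ast$-symmetrization trick introduces error terms, and one must check that every such error term is itself of the form covered by the inductive hypothesis (i.e.\ lies in some $[M_{j'},L_{k'}]$ with $j'$ odd, $j'<j$, and $k'+j'\ge k+j$) or is manifestly in $L_{k+j}$. Concretely, I would first nail down a clean lemma stating that modulo $L_{k+j}$ one can move freely between $[\,\cdot\,,\ell]$ with $\ell\in L_k$ and the $\ast$-symmetrized versions, then run the induction, and finally reduce to finitely many polylinear identities in a bounded number of variables that can be verified by a direct (if tedious) check on monomials representing $G$-orbits, exactly as in the proof of Lemma \ref{lemmaBapat1}. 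The hardest part is not any single identity but the organization that ensures the recursion on $j$ terminates cleanly at $j=3$.
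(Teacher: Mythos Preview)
Your overall framework is right---induct on odd $j$ and use Lemma \ref{lemmaBapat2} as the engine---but the key technical device that makes the induction close is missing, and Corollary \ref{multilin1} is applied in the wrong place.

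The paper's move is this: given $m\in M_j$ and $\ell=[l_1,\dots,l_k]\in L_k$, write $m=a[b,c,d]$ with $a,b,c\in A$ and $d\in L_{j-2}$, and now \emph{treat $d$ as a single free variable}. In the free algebra on the $k+4$ letters $a,b,c,d,l_1,\dots,l_k$, Lemma \ref{lemmaBapat2} says $[a[b,c,d],\ell]$ is a polylinear element of $L_{k+3}$. Only at this point does one invoke Corollary \ref{multilin1}, obtaining a linear combination of $(k+3)$-fold commutators of the shapes
\[
[y_1,\dots,y_{k+2},y_{k+3}d],\quad [y_1,\dots,y_{k+2},dy_{k+3}],\quad [y_1,\dots,d,\dots,y_{k+2}y_{k+3}],
\]
with the $y_i$ a permutation of $a,b,c,l_1,\dots,l_k$. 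Now substitute back $d\in L_{j-2}$. Terms of the third shape lie in $L_{k+j}$ outright. For the first two shapes, the innermost bracket $[y_{k+2},y_{k+3}d]$ lies in $[A,M_{j-2}]\subset L_{j-1}$ by the induction hypothesis (applied at the odd value $j-2$ with $k=1$), and bracketing $k+1$ more times lands in $L_{k+j}$.

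By contrast, you apply Corollary \ref{multilin1} to the $M_j$-generator itself (in a free algebra on $j$ or $j+1$ letters). This does not separate out the role of the $L_{j-2}$ piece, so you never get a term to which the inductive hypothesis for $j-2$ cleanly applies. Your ``peeling via Jacobi to reach $[M_3,L_{k+j-3}]$'' would be a direct (non-inductive) argument, and there is no evident reason the peeling produces $L_{k+j-3}$ rather than merely $L_k$; without the freeze-$d$ trick one only gets $[M_3,L_k]\subset L_{k+3}$, which is too weak for $j\ge 5$. Finally, the $\ast$-symmetrization trick is a red herring here: it is used once, inside the proof of Lemma \ref{lemmaBapat1}, and plays no further role in the induction for Proposition \ref{proposition}.
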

 \begin{proof}
The proof is by induction in $j$, running over odd values of $j$. For $j=1$ the statement is trivial, so 
we just have to justify the induction step, from $j-2$ to $j$.  Consider a general element $[m,l]$ with $m\in M_j$ and $l=[l_1,l_2,\dots,l_k]\in L_k$. Write $m=a[b,c,d],\ d\in L_{j-2}, \ a,b,c\in A$. 
 Let us regard $a,b,c,d,l_1,...,l_k$ as independent variables (in particular, we regard $d$ as a variable instead of an iterated bracket). 
 Then by Lemma \ref{lemmaBapat2}, $[a[b,c,d],l]$ is a polylinear element of $L_{k+3}(A_{k+4}),$ where $A_{k+4}$ is the free algebra generated by  $a,b,c,d,l_1,l_2,\dots,l_k$.  By Corollary \ref{multilin1}, 
this element is a linear combination of terms of the form 
\begin{equation}\label{threeterms}
[y_1,\dots,y_{k+2},y_{k+3}d],\
[y_1,\dots,y_{k+2},dy_{k+3}],\
[y_1,\dots,d,\dots,y_{k+1},y_{k+2}y_{k+3}]
\end{equation}
for various permutations $(y_1,\dots,y_{k+3})$ of $(a,b,c,l_1,\dots,l_k)$. We now plug in $d=[d_1,\dots,d_{j-2}]$. Then terms of the third type in (\ref{threeterms}) lie in $L_{k+j}$ by definition. 
Also, by the induction assumption $[y_{k+2},y_{k+3}d]\in L_{j-1}$, hence $[y_{k+2},dy_{k+3}]\in L_{j-1}$. Plugging this into the terms of the first two types in (\ref{threeterms}), we find that they are also in $L_{k+j}$.
This justifies the step of induction. 
 \end{proof}
 
\begin{remark} Here is another version of the proof of Proposition \ref{proposition}, due to Darij Grinberg. 
 
 Let us consider a more general setting. Let $A$ be an algebra, and $D$ be a Lie ideal of $A$. We define $L_i(D)\subset A$ for all $i \geq 1$ recursively by
$L_1(D) = D$; $L_{i+1}(D) = [A, L_i(D)]$. (So $L_i(D) = [A, [A, ..., [A, D] ...] ]$, with $i$ being the total number of $A$'s plus $1$.) 
We reserve the notation $L_i$ for $L_i(A)$. 

Notice that $L_i(L_j(D)) = L_{i+j-1}(D)$ for all $i$ and $j$. It is also 
easy to see that all $L_i(D)$ are Lie ideals of $A$ and satisfy $[L_i(D), L_j] \subseteq L_{i+j}(D)$.

We also set $M_i(D) = AL_i(D) = L_i(D)A$ (the last equality holds because $L_i(D)$ is a Lie ideal of $A$). Clearly, $M_i(D)$ is an ideal of $A$.

From now on, we assume that $D$ is an ideal (not just a Lie ideal) of $A$. (``Ideal" always means ``two-sided ideal" here.) Here is a slight generalization of Lemma \ref{lemmaBapat2}:

\begin{lemma} \label{lemmaBapat1a} If $1/6 \in R$, then $[M_3(D), A] \subseteq L_4(D)$.
\end{lemma}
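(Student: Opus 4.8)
The plan is to mimic the proof of Lemma \ref{lemmaBapat1}, but keeping track of the ideal $D$. A general element of $[M_3(D),A]$ has the form $[x[y,z,u],v]$ where one of $x,y,z,u$ lies in $D$ and the rest, together with $v$, lie in $A$; by the Leibniz-type identity $[x[y,z,u],v] = [x,v][y,z,u] + x[[y,z,u],v]$ and the fact that $D$ is a two-sided ideal, it does not matter which slot $D$ occupies as long as we only need to land in $L_4(D)$ — indeed each of the two summands on the right visibly has a factor that lies in $L_4(D)$ once we know the identities below. So I would take $A = A_5$ free on $x,y,z,u,v$, declare one of the five generators to be a placeholder for an element of $D$, and reprove the three displayed relations (2),(3),(4) (in the numbering of Lemma \ref{lemmaBapat1}) with $L_k$ replaced by $L_k(D)$.

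The key steps, in order: (i) note $[x[y,z,u],v] \equiv [x \ast [y,z,u], v]$ modulo $L_5(D)$, exactly as before, since the correction term $\tfrac12[[x,[y,z,u]],v]$ lies in $L_5$ and also in $M_3(D)$-land when the $D$-variable is among $x,y,z,u$, hence in $L_5(D)$ — here one uses $[L_i(D),L_j]\subseteq L_{i+j}(D)$ and $L_i(L_j(D)) = L_{i+j-1}(D)$ from Grinberg's general setup; (ii) the two symmetry relations $[x\ast[y,z,u],v] + [y\ast[x,z,u],v] = [[x\ast y,z,u],v] \in L_4(D)$ and $[x\ast[y,z,u],v] + [v\ast[y,z,u],x] = -[x\ast v,y,z,u] \in L_4(D)$, which hold because $x\ast y$, resp. $x\ast v$, still lies in $A$ or in $D$ as appropriate and the expressions are honest iterated brackets with a $D$-entry; (iii) conclude, as in the original, that $[x\ast[y,z,u],v]$ is antisymmetric under $G = S_3\{x,y,v\}\times S_2\{z,u\}$ in $A/L_4(D)$; (iv) invoke the \emph{same} ten-monomial identity $\mathrm{Alt}_G[x\ast[y,z,u],v] = \mathrm{Alt}_G(4[z\ast x,y,v,u] - 2[x,z,y,u\ast v])$ — this is a purely formal identity in the free algebra, and its right-hand side is manifestly in $L_4(D)$ because each term is a $4$-fold commutator with (at least) one entry carrying the $D$-label. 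Combining (i) and (iv) gives $[x[y,z,u],v] \in L_4(D)$.

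The only genuinely new bookkeeping is checking that wherever the original proof asserts membership in $L_4$ or $L_5$, the presence of a $D$-labelled slot upgrades this to $L_4(D)$ or $L_5(D)$; this is where one needs $D$ to be a two-sided ideal (so that $a\ast b \in D$ when $a\in D$, and so that the $L_i(D)$ are themselves Lie ideals closed under multiplication by $A$ on either side). I would state explicitly the observation from Grinberg's paragraph that $L_i(D)$ is a Lie ideal with $[L_i(D),L_j]\subseteq L_{i+j}(D)$ and $M_i(D) = AL_i(D) = L_i(D)A$, since these are used repeatedly. The main obstacle — such as it is — is purely organizational: one must verify that in the relation $[x[y,z,u],v]\equiv[x\ast[y,z,u],v]$ and in the three $G$-symmetry identities, the $D$-entry can be assumed to be in whichever of the five slots makes the argument work, which follows from the Leibniz expansion above and the ideal property, but should be spelled out rather than left implicit. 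The ten-monomial identity (iv) itself requires no reverification, as it is the identical formal identity already established in Lemma \ref{lemmaBapat1}.
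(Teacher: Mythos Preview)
Your proposal is correct and matches the paper's approach exactly: the paper's own proof is a two-line sketch saying the argument of Lemma \ref{lemmaBapat1} goes through verbatim ``up to adding $(D)$'s after the $L$'s and $M$'s'', with the hints that any iterated commutator of four elements with one in $D$ lies in $L_4(D)$, and that $D$ is an ideal for the $\ast$-product --- precisely the two bookkeeping points you isolate in steps (i)--(iv). One minor remark: the Leibniz-identity aside in your first paragraph is unnecessary and a bit circular (an element of $M_3(D)=A[A,[A,D]]$ already has its $D$-entry in the innermost slot $u$, and the $G$-action only shuffles it between $z$ and $u$), but since your actual argument in (i)--(iv) does not rely on it, nothing is lost.
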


\begin{proof} 
The proof is the same as the proof of Lemma \ref{lemmaBapat2}, up to adding "$(D)$"s after the $L$'s and $M$'s. The trick is that every iterated commutator of $4$ elements of $A$ lies in $D$ as long as at least one of these $4$ elements lies in $D$, and that $D$ is an ideal under the $\ast$-product (because it is an ideal under the usual product).
\end{proof} 

Next, we generalize Lemma \ref{lemmaBapat2}:

\begin{lemma} \label{lemmaBapat2a} If $1/6 \in R$, then $[M_3(D), L_k] \subseteq L_{k+3}(D)$.
\end{lemma}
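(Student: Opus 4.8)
The plan is to mimic the proof of Lemma \ref{lemmaBapat2} verbatim, just carrying the ideal $D$ along. Recall that in Lemma \ref{lemmaBapat2} one writes a typical generator of $[M_3,L_k]$ as $[a[b,c,d],[x,y]]$ with $x\in L_1=A$ and $y\in L_{k-1}$, and then uses the Leibniz-type identity
\begin{equation}
[a[b,c,d],[x,y]]=[[a[b,c,d],x],y]+[x,[a[b,c,d],y]]
\end{equation}
together with induction on $k$. The base case $k=1$ is exactly Lemma \ref{lemmaBapat1a}. For the induction step, the first term on the right-hand side is $[\,[M_3(D),A],L_{k-1}\,]\subseteq [L_4(D),L_{k-1}]\subseteq L_{k+3}(D)$ by Lemma \ref{lemmaBapat1a} and the inclusion $[L_i(D),L_j]\subseteq L_{i+j}(D)$ noted in the remark; the second term is $[A,[M_3(D),L_{k-1}]]\subseteq [A,L_{k+2}(D)]=L_{k+3}(D)$ by the inductive hypothesis. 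Hence $[M_3(D),L_k]\subseteq L_{k+3}(D)$, as claimed.

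First I would state explicitly the two structural facts being used, since they are what makes the bookkeeping go through: (i) $[L_i(D),L_j]\subseteq L_{i+j}(D)$ for all $i,j\ge 1$ (proved by an easy induction on $j$ from the definition $L_{j+1}=[A,L_j]$ and the Jacobi identity, exactly as in the $D=A$ case), and (ii) $L_i(L_j(D))=L_{i+j-1}(D)$, both of which are already recorded in the preceding remark. Then I would write "The proof is identical to that of Lemma \ref{lemmaBapat2}, with $L$'s and $M$'s decorated by $(D)$ throughout, using Lemma \ref{lemmaBapat1a} in place of Lemma \ref{lemmaBapat1} and the inclusion $[L_i(D),L_j]\subseteq L_{i+j}(D)$ to absorb the bracket with $L_{k-1}$." One could spell out the three-line induction as above to be safe, but no genuinely new idea is needed.

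The only point requiring a moment's care — and the place I would pause on — is making sure that every iterated commutator appearing in the argument that has at least one entry in $D$ actually lands in $L_\bullet(D)$ rather than merely in $L_\bullet(A)$. This is automatic because $D$ being a (two-sided) ideal means any product or commutator one of whose factors lies in $D$ again lies in $D$, so the outermost such commutators live in $D=L_1(D)$, and then repeatedly bracketing with $A$ keeps us inside the chain $L_1(D)\supseteq L_2(D)\supseteq\cdots$. In particular the term $[[a[b,c,d],x],y]$ has $a[b,c,d]\in M_3(D)\subseteq D$, so $[a[b,c,d],x]\in D$ and in fact $\in L_4(D)$ by Lemma \ref{lemmaBapat1a}; bracketing with $y\in L_{k-1}$ then gives an element of $L_{k+3}(D)$ by fact (i). So there is no real obstacle here: the lemma is a routine relativization of Lemma \ref{lemmaBapat2}, and the proof below will simply be the same induction on $k$ with the ideal carried along.

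\begin{proof}
Recall from the discussion above that $[L_i(D),L_j]\subseteq L_{i+j}(D)$ for all $i,j\ge 1$. We argue by induction on $k$. Consider a generator $[a[b,c,d],[x,y]]$ of $[M_3(D),L_k]$, where $a[b,c,d]\in M_3(D)$ (so $d\in D$, hence the bracket lies in $D$), $x\in L_1=A$, and $y\in L_{k-1}$. Then
\begin{equation}
[a[b,c,d],[x,y]]=[[a[b,c,d],x],y]+[x,[a[b,c,d],y]].
\end{equation}
For $k=1$ the statement is precisely Lemma \ref{lemmaBapat1a}. For $k>1$, assume the statement holds for $k-1$. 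By Lemma \ref{lemmaBapat1a} we have $[a[b,c,d],x]\in L_4(D)$, so $[[a[b,c,d],x],y]\in[L_4(D),L_{k-1}]\subseteq L_{k+3}(D)$. By the inductive hypothesis $[a[b,c,d],y]\in L_{k+2}(D)$, so $[x,[a[b,c,d],y]]\in[A,L_{k+2}(D)]=L_{k+3}(D)$. Hence $[a[b,c,d],[x,y]]\in L_{k+3}(D)$, completing the induction.
\end{proof}
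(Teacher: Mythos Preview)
Your proof is correct and follows exactly the approach the paper takes: the paper's own proof simply says ``the proof again is the same as that of Lemma \ref{lemmaBapat2} \ldots\ but now $d$ belongs to $D$,'' and you have carried this out in full, using Lemma \ref{lemmaBapat1a} for the base case and the inclusion $[L_i(D),L_j]\subseteq L_{i+j}(D)$ (recorded in the remark) for the first term in the induction step.
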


\begin{proof} The proof again is the same as that of Lemma \ref{lemmaBapat2}, using Lemma \ref{lemmaBapat1}, but now $d$ belongs to $D$.
\end{proof} 

The proof of Proposition \ref{proposition} now proceeds by induction over odd $j$ as before, but the induction step becomes simpler to explain. 
Namely, let $D = M_{j-2}$. This is clearly an ideal of $A$. Also, $L_{j-2} \subseteq D$. Now,
$M_j = A L_j = A [A, [A, L_{j-2}]] \subseteq A [A, [A, D]]$ (since $L_{j-2} \subseteq D$)
$= M_3(D)$, whence
$[M_j, L_k] \subseteq [M_3(D), L_k] \subseteq L_{k+3}(D)$ (by Lemma \ref{lemmaBapat2a}) 
$= L_{k+2} ([A, D]) \subseteq L_{k+2} (L_{j-1})$ (since $[A, D] = [D, A] = [M_{j-2}, L_1] \subseteq L_{j-1}$ by the induction hypothesis)
$= L_{k+j}$.
\end{remark} 

 \begin{proof}[Proof of Theorem \ref{thmBapat}]
 
 We may assume that $k$ is odd. Clearly it is enough to show that $L_jL_k\subset M_{j+k-1}.$ Let $a\in A,\ x\in L_{j-1},\ y\in L_k$. Then we have 
 $$
 [x,a]y=[x,ay]-a[x,y].
 $$
 The LHS is a completely general generator of $L_jL_k$. By Proposition \ref{proposition}, the first term of the RHS is in $M_{j+k-1}$. 
 Also, it is clear that the second term of the RHS is in $M_{j+k-1}$. This implies the theorem. 
 \end{proof} 
 
\begin{remark} 
A computer calculation with polylinear elements of $A_5$ shows that 
Lemma \ref{lemmaBapat1} holds over $R=\Bbb Z[\frac{1}{3}]$. Hence Theorem \ref{thmBapat} holds when $\frac{1}{3}\in R$, with the same proof. 
However, as shown by Krasilnikov \cite{Kr}, Theorem \ref {thmBapat} and Lemma \ref{lemmaBapat1} fail over $\Bbb Z$ and over fields of characteristic $3$. 
\end{remark}  
 
\section{Proof of Theorem \ref{thmguptalevin} and Proposition \ref{central}(1)}

\subsection{Proof of Theorem \ref{thmguptalevin}}

For the proof, we need the following lemma, which can be proved by using the Leibniz rule.

 \begin{lemma}
 \label{lemmaleibniz} One has 
$$
[c_1,c_2,\dots,c_n,ab]=\underset{j}{\sum}\underset{\sigma\in T_j}{\sum}[c_{\sigma(1)},\dots,c_{\sigma(j)},a][c_{\sigma(j+1)},\dots,c_{\sigma(n)},b]
$$ 
where $T_j$ is the set of permutations 
$\sigma \in S_n$ such that 
$\sigma(1)<\dots<\sigma(j)$ and $ \sigma(j+1)<\dots<\sigma(n)$.
 \end{lemma}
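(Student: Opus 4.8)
The plan is to recognize the iterated commutator as a composition of adjoint operators and then invoke the higher-order Leibniz rule for a composition of derivations.

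For $c\in A$ write $\ad(c)$ for the operator $w\mapsto [c,w]$ on $A$. Directly from the definition of the iterated bracket one has, for every $w\in A$,
$$
[c_1,c_2,\dots,c_n,w]=\big(\ad(c_1)\circ\ad(c_2)\circ\cdots\circ\ad(c_n)\big)(w).
$$
Since the associative multiplication of $A$ obeys $[c,ab]=[c,a]\,b+a\,[c,b]$, each $\ad(c)$ is a derivation of the associative algebra $A$. So it suffices to prove the following general fact: if $D_1,\dots,D_n$ are derivations of an associative algebra $A$, then for all $a,b\in A$
$$
D_1D_2\cdots D_n(ab)=\sum_{S\subseteq\{1,\dots,n\}}\big(D_{s_1}D_{s_2}\cdots D_{s_j}\big)(a)\cdot\big(D_{t_1}D_{t_2}\cdots D_{t_{n-j}}\big)(b),
$$
where, for each subset $S$, we write $s_1<s_2<\dots<s_j$ for the elements of $S$ and $t_1<t_2<\dots<t_{n-j}$ for the elements of its complement.

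I would prove this general fact by induction on $n$, the case $n=1$ being exactly the Leibniz identity. For the step, apply $D_1$ to the formula for $D_2\cdots D_n(ab)$ (a sum over subsets $S$ of $\{2,\dots,n\}$) and use that $D_1$ is a derivation; each resulting term splits into one in which $D_1$ is prepended to the ordered product acting on $a$ and one in which it is prepended to the ordered product acting on $b$. Because $1$ is smaller than every index in $\{2,\dots,n\}$, prepending $D_1$ keeps both products written in increasing order of index, and the two families of terms are precisely the subsets of $\{1,\dots,n\}$ that do, respectively do not, contain $1$; together they range over all subsets of $\{1,\dots,n\}$, as needed.

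Finally, I would specialize $D_i=\ad(c_i)$ and $w=ab$. For a subset $S$ of size $j$, setting $\sigma(1),\dots,\sigma(j)$ to be the elements of $S$ in increasing order and $\sigma(j+1),\dots,\sigma(n)$ to be the elements of its complement in increasing order defines a bijection between $j$-element subsets of $\{1,\dots,n\}$ and permutations $\sigma\in T_j$; moreover $\big(\ad(c_{s_1})\cdots\ad(c_{s_j})\big)(a)=[c_{\sigma(1)},\dots,c_{\sigma(j)},a]$, and likewise for $b$. Substituting, the sum over subsets becomes $\sum_j\sum_{\sigma\in T_j}$ and the identity turns into the statement of the lemma. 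I do not expect a real obstacle here; the only point requiring care is the bookkeeping in the induction step — checking that inserting the new smallest index at the front preserves the increasing order, so that no subset of $\{1,\dots,n\}$ is produced twice or omitted.
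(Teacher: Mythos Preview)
Your proof is correct and is exactly the argument the paper has in mind: the paper does not spell out a proof at all, stating only that the lemma ``can be proved by using the Leibniz rule,'' which is precisely what you do by writing the iterated bracket as a composition of adjoint derivations and inducting on $n$.
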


 It suffices to show that $L_jL_k\subset M_{j+k-2}$. The proof is by induction on $k$. Assume that for any $2\leq s<k,\ L_jL_s\subset M_{j+s-2}$. So our job is to show that $[r,a_j][b_1,\dots,b_k]\in M_{j+k-2}$, where $r=[a_1,\dots,a_{j-1}]$. Consider the expression 
$$
T:=[b_1,\dots,b_{k-2},r,a_j[b_{k-1},b_k]].
$$ 
By Lemma \ref{lemmaleibniz}, this is the sum of the following expressions:
 \begin{enumerate}
 \item $[r,a_j][b_1,\dots,b_k]$.
 \item $[b_{\sigma(1)},\dots,b_{\sigma(l)},r,a_j][b_{\sigma(l+1)},\dots,b_{\sigma(k-2)},b_{k-1},b_k]$, $l>0$. This is in $M_{j+k-2}$ by the inductive hypothesis.
 \item $[b_{\sigma(1)},\dots,b_{\sigma(l)},a_j][b_{\sigma(l+1)},\dots,b_{\sigma(k-2)},r,b_{k-1},b_k]$. This is contained in $L_{l+1}\cdot L_{k+j-l-1}$. Since $l\le k-2$, we have $l+1<k$, so this is also in $M_{k+j-2}$ by the induction hypothesis for the opposite ring $A^{\rm op}$. 
\end{enumerate}

So it remains to show that 
$T\in M_{j+k-2}$. But 
$$
T=-[b_1,\dots,b_{k-2},a_j[b_{k-1},b_k],r]\ \in L_{j+k-2}.
$$

\subsection{Proof of Proposition \ref{central}(1)}

\begin{proposition}[\cite{FS}]
\label{propfeigin}
Over any ring $R$ we have $[M_3,L_j]\subset L_{j+2}$.
\end{proposition}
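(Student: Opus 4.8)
The plan is to prove the proposition by reducing it to its case $j=1$, the inclusion $[M_3,A]\subset L_3$, and then to settle that case by a multilinear computation in a free algebra.

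\emph{Reduction to the case $j=1$.} Recall that $M_3=AL_3$ and that $L_3$ is the $R$-span of triple commutators $[b,[c,d]]$, so $M_3$ is the $R$-span of the elements $a[b,[c,d]]$. Suppose we know $[M_3,A]\subset L_3$. We prove $[M_3,L_j]\subset L_{j+2}$ by induction on $j$, the case $j=1$ being the hypothesis. For $j\ge 2$, write $L_j=[A,L_{j-1}]$, so that $L_j$ is spanned by brackets $[a,l]$ with $a\in A$ and $l\in L_{j-1}$; for $m\in M_3$ the Jacobi identity gives
$$
[m,[a,l]]=[[m,a],l]+[a,[m,l]].
$$
The first term lies in $[[M_3,A],L_{j-1}]\subset[L_3,L_{j-1}]\subset L_{j+2}$, using the hypothesis together with the standard inclusion $[L_p,L_q]\subset L_{p+q}$; the second lies in $[A,[M_3,L_{j-1}]]\subset[A,L_{j+1}]=L_{j+2}$ by the induction hypothesis. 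This completes the induction, so the proposition follows once $[M_3,A]\subset L_3$ is established.

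\emph{The case $j=1$.} A general generator of $[M_3,A]$ is $[a[b,[c,d]],v]$, and, upon expanding the inner commutators, this element is polylinear in the five letters $a,b,c,d,v$. Hence it suffices to prove $[a[b,[c,d]],v]\in L_3$ in the free algebra $A_5=R\langle a,b,c,d,v\rangle$, and --- only integer coefficients occurring --- over $R=\Bbb Z$; this reduces the statement to a finite computation. One carries it out, for instance, by using $[xy,v]=x[y,v]+[x,v]y$ and moving single letters past iterated commutators via $uz=zu+[u,z]$ (each correction $[u,z]$ with $u\in L_k$, $k\ge 2$, lying in $L_{k+1}\subset L_3$ and so harmless) to bring $[a[b,[c,d]],v]$ to a normal form in $L_2(A_5)$, and then checking that its class in $B_2(A_5)=L_2/L_3$ vanishes by evaluating on the finitely many orbit representatives of the multilinear monomials in $a,b,c,d,v$ --- exactly the style of argument used to prove Lemma \ref{lemmaBapat1}.

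\emph{Where the difficulty lies.} The entire content of the proposition sits in this base case, and the cancellation it expresses is genuine: in the expansion of $[a[b,[c,d]],v]$ no proper subsum need belong to $L_3$ --- for example $[a,v]\,[b,[c,d]]\in L_2L_3$ is in general not a sum of commutators, a product of commutators not being one --- so the verification cannot be split off term by term. This is also why soft arguments fall short of the sharp conclusion: Theorem \ref{thmguptalevin} alone yields only $[M_3,L_j]\subset M_{j+1}$, which is far weaker than $L_{j+2}$.
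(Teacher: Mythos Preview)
Your proposal is correct and follows essentially the same strategy as the paper: reduce to the base case $[M_3,A]\subset L_3$, which is left as a finite multilinear verification in $A_5$, and then propagate to general $j$. The only cosmetic difference is that your reduction proceeds by a direct Jacobi induction on $j$ (exactly as in the proof of Lemma~\ref{lemmaBapat2}), whereas the paper invokes Lemma~\ref{multilin} once to move the $M_3$ factor to the innermost slot of an iterated commutator; these are two packagings of the same Jacobi manipulation.
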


 \begin{proof}
 It can be shown easily that $[A,M_3]\in L_3$ by just checking that $[x[y,z,u],v]\in L_3$ for $A=A_5$ (this can be done by hand or using a computer).
 Now, using Lemma \ref{multilin}, we get $[M_3,L_j]\subset [A...[A,M_3]..]$ ($j$ commutators), and the latter 
 is in $L_{j+2}$, since $[A,M_3]\subset L_3$. 
 \end{proof}
 
 \begin{corollary} (Proposition \ref{central}(1))
 Let $Z\subset B_1$ be the image of $M_3$ in $B_1$. Then $Z$ is central in the graded Lie algebra $B=\underset{i\geq1}{\bigoplus}B_i$.
 \end{corollary}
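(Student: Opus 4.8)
The plan is to deduce this immediately from Proposition \ref{propfeigin}. Recall that $Z$ is by definition the image of $M_3$ under the projection $L_1=A\to B_1=A/L_2$, that is, $Z=(M_3+L_2)/L_2$; in particular $Z$ is concentrated in degree $1$, so "central in $B=\bigoplus_{i\ge 1}B_i$" amounts to checking $[Z,B_i]=0$ inside $B_{i+1}$ for every $i\ge 1$, using that $B$ is a graded Lie algebra with the bracket $B_1\times B_i\to B_{i+1}$ induced from that of $A$.

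First I would fix $z\in Z$ and $b\in B_i$ and choose representatives $m\in M_3$ with $z=m+L_2$ and $\ell\in L_i$ with $b=\ell+L_{i+1}$. Since $[m,\ell]\in[A,L_i]=L_{i+1}$, the element $[z,b]\in B_{i+1}=L_{i+1}/L_{i+2}$ is represented by $[m,\ell]$. Now I apply Proposition \ref{propfeigin} with $j=i$: it yields $[M_3,L_i]\subset L_{i+2}$, hence $[m,\ell]\in L_{i+2}$, so $[z,b]=0$ in $B_{i+1}$. (Independence of the choice of $m$ is automatic from the fact that $B$ is a well-defined graded Lie algebra, but can also be seen directly: replacing $m$ by $m+\ell'$ with $\ell'\in L_2$ changes $[m,\ell]$ by $[\ell',\ell]\in[L_2,L_i]\subset L_{i+2}$.) As $i$ was arbitrary, $[Z,B]=0$, so $Z$ is central; in particular $Z$ is an ideal of $B$, and therefore $\overline{B}:=B/Z=\overline{B}_1\oplus B_2\oplus\cdots$ is a graded Lie algebra.

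There is essentially no obstacle here beyond Proposition \ref{propfeigin} itself: the entire content of the corollary is carried by that proposition, and the remainder is routine bookkeeping about the associated graded Lie algebra of a central filtration. The only mild point of care is that $Z$ sits in degree $1$, so the verification of centrality is exactly the family of statements $[M_3,L_i]\subset L_{i+2}$ for $i\ge 1$, with the base case $[M_3,A]=[M_3,L_1]\subset L_3$ (giving $[Z,B_1]\subset B_2$) included among them.
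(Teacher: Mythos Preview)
Your proof is correct and follows essentially the same approach as the paper: both deduce centrality of $Z$ directly from Proposition~\ref{propfeigin}. The only minor difference is that the paper uses just the case $j=1$ (i.e.\ $[M_3,A]\subset L_3$, giving $[Z,B_1]=0$) together with the fact that $B_1$ generates $B$ as a Lie algebra, whereas you invoke the full statement $[M_3,L_i]\subset L_{i+2}$ for each $i$ separately; both routes are immediate.
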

 
 \begin{proof}
 By Proposition \ref{propfeigin} we have 
that $[M_3,A]\subset L_3$ and hence we get $[B_1,Z]=0$, as the image of $L_3$ in $B_2$ is $0$. But $B_1$ generates $B$ as a Lie algebra, so $[B,Z]=0$.
 \end{proof}
 
 Therefore, we can define the Lie algebra $\overline{B}=B/Z$, 
where $\overline{B}=\overline{B}_1\oplus \underset{i\geq 2}{\bigoplus}B_i$ and $\overline{B_1}=B_1/Z.$  

\section{Proof of Theorem \ref{fsthm} (1),(2),(4),(5)}

\subsection{Proof of Theorem \ref{fsthm}(1)}
Recall that we have a new non-commutative product on differential forms $\Omega=\Omega(\Bbb{C}^n)=\underset{k\geq 0}{ \bigoplus} \Omega^k(\Bbb{C}^n)$, given by:
$$\alpha\ast\beta=\alpha\wedge\beta+\frac{1}{2}d\alpha\wedge d\beta;$$
the corresponding commutator is given by
$$[\alpha,\beta]_\ast=\alpha\ast\beta-\beta\ast\alpha=d\alpha\wedge d\beta,$$
if $\alpha,\ \beta\in \Omega^{\rm even}$. We have by Corollary \ref{corhomo} a homomorphism $\widetilde{\phi}: A\to \Omega_\ast^{\rm even}$ defined by $\widetilde{ \phi}(x_i)=x_i$, and it 
descends to a homomorphism $\phi: A/M_3\to \Omega_\ast^{\rm even}$.

\begin{proposition} (Theorem \ref{fsthm}(1))
 $\phi $ is an isomorphism.
 \end{proposition}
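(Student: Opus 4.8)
The plan is to show that $\phi\colon A/M_3 \to \Omega_\ast^{\rm even}$ is an isomorphism by exhibiting an inverse, or at least by a dimension/Hilbert-series count in each graded degree after reducing to the case $n < \infty$, $R = \Bbb C$. Since $\widetilde\phi(x_i) = x_i$ generates $\Omega_\ast^{\rm even}$ as an algebra (every polynomial $\Bbb C[x_1,\dots,x_n]$ lies in the image, and then every product $dx_{i_1}\wedge\cdots\wedge dx_{i_{2k}}$ times a polynomial is hit by suitable $\ast$-products of the $x_i$'s and linear forms, using $[\alpha,\beta]_\ast = d\alpha\wedge d\beta$), the map $\phi$ is surjective. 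So the entire content is injectivity, equivalently that $M_3 = \Ker\widetilde\phi$, i.e. that the relations defining $\Omega_\ast^{\rm even}$ as a quotient of $A$ are exactly generated by the triple-commutator-ideal relations.

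First I would set up a grading: $A = A_n$ is graded by total degree in the $x_i$, $M_3$ is a graded ideal, and $\widetilde\phi$ is degree-preserving if we declare $\deg x_i = 1$ and $\deg(dx_i) = 1$ as well (so the extra term $\frac12\, d\alpha\wedge d\beta$ in $\alpha\ast\beta$ is homogeneous of the same degree as $\alpha\wedge\beta$ — note it does \emph{not} preserve the form-degree, only the total polynomial degree, which is exactly what we want). Having $\phi$ surjective and degree-preserving, it suffices to prove $\dim (A/M_3)[d] \le \dim (\Omega_\ast^{\rm even})[d]$ for every $d$, since surjectivity then forces equality and hence injectivity. The right-hand side is computable: $h_{\Omega^{\rm even}}(t) = \frac{1}{(1-t)^n}\sum_{0\le i\le n/2}\binom{n}{2i} t^{2i}$. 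So the real task is an upper bound on $h_{A/M_3}(t)$.

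For that upper bound I would produce an explicit spanning set for $A/M_3$. Using $M_3 = A L_3$ and the identity $[[a,b],c] \equiv 0 \pmod{M_3}$, one shows that modulo $M_3$ the algebra is ``almost commutative'': any two generators commute up to a central (in $A/M_3$) correction term coming from $L_2$, and $L_2/L_3$-type elements are themselves central mod $M_3$ and square to controllable things. Concretely, I expect that $A/M_3$ is spanned by elements of the form (monomial in the $x_i$) times (a product of distinct ``$2$-forms'' $\xi_{ij} := [x_i,x_j]$ with $i<j$), and that in $A/M_3$ these $\xi_{ij}$ multiply like anticommuting variables — i.e. $\xi_{ij}\xi_{kl} = -\xi_{kl}\xi_{ij}$, $\xi_{ij}^2 = 0$, and they are central — matching exactly the wedge algebra of even forms. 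Pushing these relations down to $A/M_3$ by direct commutator manipulation (this is where Proposition \ref{polid}, i.e. $[[a,b],c]=0$ in the image, and the relation $M_3 = AL_3$ do the work) gives the spanning set, hence the Hilbert-series inequality, hence the result.

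The main obstacle is precisely establishing, purely inside $A/M_3$, that the elements $\xi_{ij} = [x_i,x_j]$ are central, square-zero, and mutually anticommuting, and that together with $\Bbb C[x_1,\dots,x_n]$ they span — in other words, proving the \emph{upper} bound on the size of $A/M_3$. The surjectivity and the grading bookkeeping are routine; the delicate combinatorial step is showing no further collapse is forced, i.e. that the generators-and-relations presentation of $\Omega_\ast^{\rm even}$ via $\ast$ is not ``bigger'' than $A/M_3$. (One can also organize this step as: check directly that $\phi$ is injective in each degree by constructing a linear section, using that $\Omega_\ast^{\rm even}$ has the PBW-type basis $x^{\mathbf a}\, dx_{i_1}\wedge\cdots\wedge dx_{i_{2k}}$ and lifting each basis vector to $A$ via iterated $\ast$-products of linear forms, then verifying these lifts are linearly independent mod $M_3$ by applying $\phi$.) I would expect the cleanest writeup to combine the explicit spanning set above with the Hilbert-series count.
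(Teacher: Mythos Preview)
Your strategy is exactly the paper's: show $\phi$ is surjective, then exhibit a spanning set of $A/M_3$ of the form $x_1^{m_1}\cdots x_n^{m_n}\,\xi_{i_1i_2}\cdots\xi_{i_{2r-1}i_{2r}}$ with $i_1<\cdots<i_{2r}$, and conclude injectivity because the images of these elements are linearly independent in $\Omega^{\rm even}$.

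However, the relations you propose among the $\xi_{ij}=[x_i,x_j]$ are not correct and, more importantly, not sufficient. First, $2$-forms are even, so the $\xi_{ij}$ \emph{commute} modulo $M_3$, not anticommute. Second, centrality together with $\xi_{ij}^2=0$ is far too weak to yield the desired spanning set: with only these relations, already for $n=4$ the three products $\xi_{12}\xi_{34}$, $\xi_{13}\xi_{24}$, $\xi_{14}\xi_{23}$ would be independent, whereas their images are all $\pm\, dx_1\wedge dx_2\wedge dx_3\wedge dx_4$, so your dimension count would not close. The missing ingredient is the identity
\[
[x,y][z,t]+[x,z][y,t]\equiv 0\pmod{M_3},
\]
which gives $\xi_{ij}\xi_{kl}+\xi_{kj}\xi_{il}=0$ (and in particular $\xi_{ij}\xi_{il}=0$ whenever an index repeats). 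The paper proves this by the short computation
\[
[x,y[z,t]]+[x,z[y,t]]=[x,[yz,t]]+[x,[z,[y,t]]]\in M_3.
\]
With these ``Pl\"ucker-type'' relations in hand, any product of $\xi$'s can be straightened to one with strictly increasing indices, matching the basis of $\Omega^{\rm even}$, and then your Hilbert-series (equivalently, linear-independence-of-images) argument goes through as written.
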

 \begin{proof}
 First we show surjectivity. Note that $\phi([x_i,x_j])=dx_i\wedge dx_j$. This implies that 
 $$
 \phi(x_{1}^{m_1}\cdots x_n^{m_n} [x_{i_1},x_{i_2}]\cdots [x_{i_{2r-1}},x_{i_{2r}}])=
 $$
 $$
 x_{1}^{m_1}\cdots x_n^{m_n} dx_{i_1}\wedge dx_{i_2}\wedge \cdots \wedge dx_{i_{2r-1}}\wedge dx_{i_{2r}}+\text{higher degree terms}.
 $$
But such elements span $\Omega_\ast^{\rm even}$, so $\phi$ is surjective.

Now we show injectivity. Observe that for any algebra $A$ and any $x,y,z,t\ \in A $, we have modulo $M_3$:

$\begin{array}{lcl}
[x,y][z,t]+[x,z][y,t]&=& [x,y[z,t]]+[x,z[y,t]]\\
&=& [x,[yz,t]]-[x,[y,t]z]+[x,z[y,t]]\\
&=&[x,[yz,t]]+[x,[z,[y,t]]]\\
&=& 0\ (\text{mod}\ M_3).
\end{array}$

In particular, $[x,y][x,t]=0\ \text{mod}\ M_3$. Hence the elements $u_{ij}=[x_i,x_j]\in A/M_3$ satisfy $u_{ij}u_{il}=0,\ u_{ij}u_{kl}+u_{kj}u_{il}=0$ where $i,j,k,l$ are distinct. Also $[x_i,u_{jk}]=0,$ so $u_{jk}$ is central in $A/M_3$. So the elements $x_1^{m_1}\cdots x_n^{m_n} u_{i_1i_2}\cdots u_{i_{2r-1}i_{2r}},$ for all possible $m_1,\cdots,m_n\geq0$ and $1\leq i_1<i_2<i_3<\cdots<i_{2r}\leq n$ form a spanning set for $A/M_3$. But the images of these elements under $\phi$ are linearly independent, so $\phi$ is injective and thus an isomorphism.
\end{proof}

\subsection{Proof of Theorem \ref{fsthm}(2)}

\begin{corollary}(Theorem \ref{fsthm}(2)) $\phi(N_2)=\Omega^{\rm even,+}=\oplus_{1\le i\le n/2}\Omega^{2i}$, the space of even forms of positive degree.
\end{corollary}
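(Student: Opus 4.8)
The statement to prove is that under the isomorphism $\phi\colon A/M_3 \xrightarrow{\sim} \Omega_\ast^{\rm even}$ of Theorem \ref{fsthm}(1), the image of $N_2 = M_2/M_3$ equals $\Omega^{\rm even,+} = \oplus_{1\le i\le n/2}\Omega^{2i}$, the span of even forms of strictly positive degree. Since $\phi$ is already known to be an isomorphism, this amounts to computing $\phi(M_2)$ inside $\Omega_\ast^{\rm even}$, i.e. identifying the image of the ideal $M_2 = A L_2$ (equivalently $A\cdot[A,A]\cdot A$) under $\phi$.

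The plan is as follows. First I would compute $\phi$ on generators of $M_2$: we have $M_2 = A\cdot L_2 = A\cdot[A,A]$, so $\phi(M_2)$ is the $\Omega_\ast^{\rm even}$-ideal (under the $\ast$-product) generated by the elements $\phi([x_i,x_j]) = dx_i\wedge dx_j$. Recall from the proof of surjectivity in Theorem \ref{fsthm}(1) that the spanning elements of $A/M_3$ of the form $x_1^{m_1}\cdots x_n^{m_n}u_{i_1 i_2}\cdots u_{i_{2r-1}i_{2r}}$, with at least one commutator factor present (i.e. $r\ge 1$), span $M_2/M_3$, and these map under $\phi$ to $x_1^{m_1}\cdots x_n^{m_n}\, dx_{i_1}\wedge\cdots\wedge dx_{i_{2r}}$ plus higher-degree terms. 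Second, I would observe that these images all lie in $\Omega^{\rm even,+}$ — each has a wedge factor $dx_{i_1}\wedge\cdots\wedge dx_{i_{2r}}$ with $2r\ge 2$, and the ``higher degree terms'' produced by the $\ast$-product only increase the form-degree — so $\phi(M_2)\subseteq \Omega^{\rm even,+}$. Third, for the reverse inclusion I would argue by induction on form-degree: any monomial form $x_1^{m_1}\cdots x_n^{m_n}\,dx_{i_1}\wedge\cdots\wedge dx_{i_{2r}}$ with $2r\ge 2$ is the leading term of $\phi$ of the corresponding element of $M_2/M_3$, and its lower-order corrections live in $\Omega^{2r-2}\oplus\cdots$, which by the inductive hypothesis are already in $\phi(M_2)$ provided $2r-2\ge 2$; the base case $2r=2$ has no corrections since $d(dx_i\wedge dx_j\cdot(\text{function}))$ contributes only to even higher degree — more carefully, one checks that on $\Omega^2$ the $\ast$-product correction to $\alpha\wedge\beta$ lands in $\Omega^{\ge 4}$, so degree-$2$ forms are hit exactly. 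Hence $\Omega^{\rm even,+}\subseteq\phi(M_2)$, giving equality.

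Alternatively, and perhaps more cleanly, I would use that $\phi$ is a filtered isomorphism: give $\Omega_\ast^{\rm even}$ the (exhaustive, separated) filtration by form-degree $\Omega^{\ge 2k} = \oplus_{i\ge k}\Omega^{2i}$, which is respected by the $\ast$-product since $\alpha\ast\beta = \alpha\wedge\beta + \tfrac12 d\alpha\wedge d\beta$ raises form-degree by $0$ or $2$. Then $\Omega^{\rm even,+} = \Omega^{\ge 2}$ is precisely the ideal generated by $\Omega^2$ under $\ast$, and since $\phi^{-1}(\Omega^2$-generators$)$ are the $u_{ij}$'s which generate $M_2/M_3$ as an ideal, $\phi(M_2) = \Omega^{\ge 2}$ follows formally. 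The main obstacle, and the only place requiring genuine care, is verifying the reverse inclusion: one must ensure that no element of $\Omega^{\rm even,+}$ is ``lost'' because the higher-degree corrections from the $\ast$-product prevent the leading-term argument from closing — this is handled by the downward induction on form-degree sketched above, using separatedness of the filtration (which holds since forms on $\Bbb C^n$ have degree at most $n$).
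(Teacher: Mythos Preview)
Your alternative (filtered) approach is correct, but the first sketch contains a direction error: the $\ast$-product correction $\tfrac12\,d\alpha\wedge d\beta$ \emph{raises} form-degree by $2$, so the corrections to $\phi(x_1^{m_1}\cdots x_n^{m_n}u_{i_1i_2}\cdots u_{i_{2r-1}i_{2r}})$ live in $\Omega^{2r+2}\oplus\Omega^{2r+4}\oplus\cdots$, not in $\Omega^{2r-2}\oplus\cdots$. Accordingly the induction must be \emph{downward} from the top degree $2\lfloor n/2\rfloor$, with base case at the top (where there are no higher corrections), not upward from $2r=2$. Once this is fixed, your argument goes through.

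The paper's proof is shorter because it exploits a vanishing you overlooked: since $\phi([b,c])=d\beta\wedge d\gamma$ is closed, one has
\[
\phi(a[b,c])=\alpha\ast(d\beta\wedge d\gamma)=\alpha\wedge d\beta\wedge d\gamma+\tfrac12\,d\alpha\wedge d(d\beta\wedge d\gamma)=\alpha\wedge d\beta\wedge d\gamma,
\]
with \emph{no} correction term at all. So $\phi(N_2)$ is exactly the span of elements $\alpha\wedge d\beta\wedge d\gamma$ with $\alpha,\beta,\gamma\in\Omega^{\rm even}$, and one checks immediately that these span $\Omega^{\rm even,+}$ (take $\beta,\gamma$ among the coordinate functions). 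This bypasses any induction or filtered-algebra bookkeeping. Your approach works but does more than necessary; the paper's observation that commutators map to closed forms kills the deformation term in one stroke.
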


\begin{proof} Note that $N_2$ is the span of the elements $a[b,c];\ a,b,c\in A/M_3$.
Let $\alpha,\beta,\gamma\in\Omega^{\rm even}$ and let $a=\phi^{-1}(\alpha),\ b=\phi^{-1}(\beta),\ c=\phi^{-1}(\gamma)\in A/M_3$
(this makes sense since $\phi$ is an isomorphism by Theorem \ref{fsthm}(1)). Then $\phi([b,c])=d\beta \wedge d\gamma,$ so $\phi(a[b,c])=\alpha\wedge d\beta\wedge d\gamma$. But $\Omega^{\rm even,+}$ is spanned by such elements.
\end{proof}

\begin{corollary}\label{Jenn1}(Theorem \ref{Jenn} over $\Bbb C$)
If $A$ is an algebra over $\Bbb{C}$ with $\leq 2r-1$ generators then $M_2^r\subset M_3$. Hence $M_2^{(k-2)r}\subset M_k$ for $k\ge 3$.
\end{corollary}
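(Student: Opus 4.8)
The plan is to use the explicit presentation of $A_n/M_3$ that follows from Theorem \ref{fsthm}(1), but in a form valid over an arbitrary algebra $A$ generated by $\le 2r-1$ elements, not just over the free algebra. First I would reduce to the case $A = A_n$ with $n \le 2r-1$: indeed $M_2^r(A)$ is the image of $M_2^r(A_n)$ under the natural surjection $A_n \twoheadrightarrow A$, and likewise for $M_3$, so it suffices to prove $M_2^r(A_n) \subset M_3(A_n)$ when $n \le 2r-1$. Next I would recall from the proof of Theorem \ref{fsthm}(1) that, working modulo $M_3$, the ideal $M_2 = A L_2$ is spanned by monomials $x_1^{m_1}\cdots x_n^{m_n} u_{i_1 i_2}\cdots u_{i_{2s-1}i_{2s}}$ with the $u_{ij} = [x_i,x_j]$ central and satisfying $u_{ij}u_{il}=0$ and $u_{ij}u_{kl}+u_{kj}u_{il}=0$ for distinct indices. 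The key point is that $M_2/M_3$ as a module over $A_{\mathrm{ab}} = \Bbb C[x_1,\dots,x_n]$ is spanned by the products $u_{i_1i_2}\cdots u_{i_{2s-1}i_{2s}}$, i.e.\ it corresponds under $\phi$ to $\Omega^{\mathrm{even},+}$, the even forms of positive degree.

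The heart of the argument is then a purely combinatorial/exterior-algebra fact: a product of $r$ elements of $M_2/M_3$ is a sum of terms each of which is a polynomial multiple of a product $u_{i_1j_1}\cdots u_{i_rj_r}$ of $r$ of the generators $u_{ij}$ (equivalently, the image under $\phi$ of a product of $r$ forms $dx_i\wedge dx_j$, hence a form of degree $\ge 2r$). But in $n \le 2r-1$ variables every differential form of degree $\ge 2r$ vanishes, since $2r > n$. Therefore every such product lies in the kernel of the quotient map $M_2 \to M_2/M_3$, i.e.\ in $M_3$. This gives $M_2^r \subset M_3$. For the second assertion, I would iterate: by Theorem \ref{thmguptalevin}, $M_3 \cdot M_3 \subset M_4$, and more generally $M_3 \cdot M_k \subset M_{k+1}$, so $M_2^{2r} = (M_2^r)^2 \subset M_3 \cdot M_3 \subset M_4$, and inductively $M_2^{(k-2)r} \subset M_k$ for all $k \ge 3$.

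I expect the main obstacle to be making precise the reduction ``a product of $r$ elements of $M_2/M_3$ is spanned by terms with $\ge r$ of the $u_{ij}$'s,'' i.e.\ checking that multiplication $M_2/M_3 \otimes M_2/M_3 \to M_2^2/(M_2^2 \cap M_3)$ really does increase the number of bracket-factors additively. This is exactly the statement that $\phi$ sends a product of two elements of $\Omega^{\mathrm{even},+}$ into the ideal generated by forms of degree $\ge 4$, which follows from the formula $\alpha \ast \beta = \alpha\wedge\beta + \tfrac12 d\alpha\wedge d\beta$ together with the fact that $\alpha,\beta$ already have positive (hence $\ge 2$) degree, so $\alpha\wedge\beta$ has degree $\ge 4$ and $d\alpha\wedge d\beta$ has degree $\ge 4$ as well. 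Once this bookkeeping is set up, the dimension count ``$2r > n$ forces top-degree forms to vanish'' closes the argument immediately.
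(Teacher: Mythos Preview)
Your proof is correct and follows essentially the same route as the paper: reduce to $A=A_n$ with $n\le 2r-1$, use Theorem \ref{fsthm}(1),(2) to identify $M_2/M_3$ with $\Omega^{\mathrm{even},+}\subset\Omega^{\ge 2}$, observe that the $\ast$-product sends $\Omega^{\ge a}\times\Omega^{\ge b}$ into $\Omega^{\ge a+b}$ so that $\widetilde{\phi}(M_2^r)\subset\Omega^{\ge 2r}=0$, and then iterate via Theorem \ref{thmguptalevin}. The ``main obstacle'' you flag is exactly the filtered-multiplication check the paper leaves implicit, and your verification of it is fine.
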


\begin{proof}
It suffices to assume that $A=A_n,\ n\leq 2r-1$. Then, by Theorem \ref{fsthm}(2), $\widetilde{\phi}(M_2)\subset \Omega^{\rm even,+}\subset \Omega^+.$ Therefore $\widetilde{\phi}(M_2^r)\subset \Omega^{\geq 2r}=0$, which implies that $M_2^r\subset M_3$. But by Theorem \ref{thmguptalevin}, 
$M_3M_i\subset M_{i+1}$ for all $i$, which by induction in $k$ yields $ M_2^{(k-2)r}\subset M_k$ for all $k\geq 3$.
\end{proof}

\subsection{Proof of Theorem \ref{fsthm}(4,5)}

\begin{corollary} (Theorem \ref{fsthm}(4,5))
\label{cor3.4}
(i) $\widetilde{\phi}(L_2)=\Omega_{\rm exact}^{\rm even}=\{\omega\in \Omega^{\rm even},\  \omega=d\alpha\}.$
 
 (ii) The map $\widetilde{\phi}$ induces a map $\hat{\phi}: B_1\rightarrow \Omega^{\rm even}/\Omega_{\rm exact}^{\rm even}$ which kills $Z$, so defines a map $$\overline{\phi}:\overline{B}_1\rightarrow \Omega^{\rm even}/\Omega_{\rm exact}^{\rm even},$$ which is an isomorphism.
 \end{corollary}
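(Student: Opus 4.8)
The plan is to deduce both parts from the isomorphism $\phi\colon A/M_3\xrightarrow{\ \sim\ }\Omega_\ast^{\rm even}$ of Theorem \ref{fsthm}(1) by tracking where $L_2$ and $M_3$ go. First I would establish (i), the claim $\widetilde\phi(L_2)=\Omega_{\rm exact}^{\rm even}$. For the inclusion $\subseteq$: since $\phi([b,c])=d\beta\wedge d\gamma=d(\beta\wedge d\gamma)$ for $b=\phi^{-1}(\beta),\ c=\phi^{-1}(\gamma)$, every generator $[b,c]$ of $L_2$ maps into $\Omega_{\rm exact}^{\rm even}$, and exact forms are closed under addition, so $\widetilde\phi(L_2)\subseteq\Omega_{\rm exact}^{\rm even}$ once we pass to $A/M_3$ (recall $M_3\subseteq L_2$, so there is no ambiguity). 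For the reverse inclusion I would argue that $\Omega_{\rm exact}^{\rm even}$ is spanned by forms $d\alpha$ with $\alpha\in\Omega^{\rm odd}$, and that every such $\alpha$ is, modulo the ideal structure, a sum of products of a function with a single $dx_i$; then $d\alpha$ is a sum of terms of the shape $df\wedge dx_i=\phi([f',x_i])$ (with $\phi(f')=f$) plus wedges of more differentials, the latter again being images of commutators times functions. More cleanly: by Theorem \ref{fsthm}(2), $\widetilde\phi$ identifies $N_2=M_2/M_3$ with $\Omega^{{\rm even},+}$, and under this identification the de Rham differential $d\colon\Omega^{{\rm even},+}\to\Omega^{{\rm even},+}$ corresponds (up to the $\ast$-deformation, which lives in higher degree) to the commutator map $a\mapsto$ something landing in $L_2$; I would make this precise to conclude $\widetilde\phi(L_2)=\operatorname{im}(d)=\Omega_{\rm exact}^{\rm even}$.

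Granting (i), part (ii) is mostly bookkeeping. The surjection $A\to B_1=A/L_2$ composed with $\widetilde\phi$ and the projection $\Omega^{\rm even}\to\Omega^{\rm even}/\Omega_{\rm exact}^{\rm even}$ is well defined precisely because $\widetilde\phi(L_2)\subseteq\Omega_{\rm exact}^{\rm even}$ by (i); call the induced map $\hat\phi\colon B_1\to\Omega^{\rm even}/\Omega_{\rm exact}^{\rm even}$. To see $\hat\phi(Z)=0$, recall $Z$ is the image of $M_3$ in $B_1$, and $M_3\subseteq M_2$, so $\widetilde\phi(M_3)\subseteq\widetilde\phi(M_2)$, which by Theorem \ref{fsthm}(2) lies in $\Omega^{{\rm even},+}$; more sharply, $M_3\subseteq L_2$ gives $\widetilde\phi(M_3)\subseteq\Omega_{\rm exact}^{\rm even}$ by (i), hence $\hat\phi(Z)=0$ and $\hat\phi$ factors through $\overline\phi\colon\overline B_1\to\Omega^{\rm even}/\Omega_{\rm exact}^{\rm even}$. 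Finally, $\overline\phi$ is surjective because $\widetilde\phi$ is surjective onto $\Omega_\ast^{\rm even}=\Omega^{\rm even}$, and injective because its kernel is exactly $\widetilde\phi^{-1}(\Omega_{\rm exact}^{\rm even})/L_2 = L_2/L_2 = 0$ in $\overline B_1$ — here I use that $\widetilde\phi$ descends to the isomorphism $\phi$ on $A/M_3$, so $\widetilde\phi^{-1}(\Omega_{\rm exact}^{\rm even})$ is exactly $L_2 + M_3 = L_2$, and then further quotienting by $Z=M_3/(M_3\cap L_2)$ changes nothing since $M_3\subseteq L_2$.

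I expect the only real obstacle to be the reverse inclusion in (i): showing every even exact form is hit by $\widetilde\phi(L_2)$, not merely by $\widetilde\phi(A)$. The subtlety is that $\widetilde\phi$ is an isomorphism only after passing to $A/M_3$, and $\Omega_{\rm exact}^{\rm even}$ is a proper subspace there, so I must exhibit explicit commutator preimages rather than just invoke surjectivity. The cleanest route is probably: pick $\omega=d\alpha\in\Omega_{\rm exact}^{\rm even}$ with $\alpha=\sum_I f_I\, dx_I$ ranging over odd multi-indices; write $\alpha=\sum$ of $\ast$-products $\phi(f_I')\ast dx_{i_1}\ast\cdots$ up to lower-order corrections, use that $d$ intertwines $\ast$-multiplication by a closed form with wedge-multiplication, and reduce to the degree-one case $d f\wedge dx_i=\phi([\widetilde f, x_i])$ where $\phi(\widetilde f)=f$; a downward induction on form-degree (since the $\ast$-corrections raise degree) then finishes it. Everything else is diagram-chasing with the already-established Theorem \ref{fsthm}(1),(2).
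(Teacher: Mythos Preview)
Your inclusion $\widetilde\phi(L_2)\subseteq\Omega_{\rm exact}^{\rm even}$ is fine and matches the paper. For the reverse inclusion, however, you are making life much harder than necessary. The paper's argument is a single line: every odd form is a linear combination of elements $w\wedge dx_i$ with $w\in\Omega^{\rm even}$, and for such an element
\[
d(w\wedge dx_i)=dw\wedge dx_i=\phi([b,x_i]),\qquad \phi(b)=w.
\]
So every exact even form lies in $\widetilde\phi(L_2)$ immediately---no induction on form-degree, no $\ast$-corrections to track. Your proposed reduction to the ``degree-one case $df\wedge dx_i$'' only produces exact $2$-forms, and your remark that the higher-degree pieces are ``images of commutators times functions'' lands you in $M_2$, not $L_2$; you would then need an extra argument (e.g.\ $[a,b][c,d]\equiv[a,b[c,d]]\bmod M_3$) that you do not supply. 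The key observation you are missing is simply to allow $b$ to be an arbitrary element of $A$ (equivalently $w$ an arbitrary even form), not just a polynomial.

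There is also a genuine error in part (ii): the claim $M_3\subseteq L_2$ is false. For instance $x[y,[z,x]]\in M_3(A_3)$ has nonzero image in $A_3/[A_3,A_3]$ (the cyclic words $xxyz$, $xyxz$, $xxzy$ appear with coefficients $1,-2,1$). This is exactly why $Z\ne 0$ in general. The fix is easy and in fact simpler than what you wrote: since $\phi$ is an isomorphism on $A/M_3$, one has $\widetilde\phi(M_3)=0$, so $\hat\phi(Z)=0$ trivially. For injectivity, your computation $\widetilde\phi^{-1}(\Omega_{\rm exact}^{\rm even})=L_2+M_3$ is correct; just do not collapse this to $L_2$. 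Since $\overline B_1=A/(L_2+M_3)$, you get $\ker\overline\phi=(L_2+M_3)/(L_2+M_3)=0$ directly.
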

 
 \begin{proof}
 (i) $L_2$ is spanned by $[a,b],\ a,b\in A$. We have $$\widetilde{\phi}([a,b])=d\widetilde{\phi}(a)\wedge d\widetilde{\phi}(b)=d(\widetilde{\phi}(a)\wedge d\widetilde{\phi}(b))$$ which is exact. Moreover, fix $b$ such that $\phi(b)=w$, a given element of $\Omega^{\rm even}$ of degree $2r$. Then $\phi([b,x_i])=dw\wedge dx_i=d(w\wedge dx_i).$
 But elements $w\wedge dx_i$ span the space of odd forms. This implies (i). 
 
 (ii) follows immediately from (i). 
 \end{proof}

\section{Proof of Theorem \ref{wnact}}
Let $A$ be any algebra and let ${\rm Der}(A)$ be the Lie algebra of derivations of $A$. 
Let $D\in {\rm Der}(A)$ be such that $D(A)\subset M_3.$ 

\begin{proposition}\label{dli}
  For all $i\geq 2,\  D(L_i)\subset L_{i+1} .$
  \end{proposition}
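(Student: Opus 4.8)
The plan is to argue by induction on $i$, using the Leibniz rule for derivations together with the fact that $D$ maps $A$ into the ideal $M_3$. The base case $i=2$ is the crucial one: I need $D(L_2)\subset L_3$. Since $L_2$ is spanned by commutators $[a,b]$, apply the Leibniz rule $D([a,b])=[D(a),b]+[a,D(b)]$. Each term lies in $[M_3,A]$ because $D(a),D(b)\in M_3$. By Proposition \ref{propfeigin} (with $j=1$), $[M_3,L_1]=[M_3,A]\subset L_3$, so $D([a,b])\in L_3$, establishing the base case.

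For the induction step, assume $D(L_i)\subset L_{i+1}$; I want $D(L_{i+1})\subset L_{i+2}$. The module $L_{i+1}=[A,L_i]$ is spanned by elements $[a,y]$ with $a\in A$, $y\in L_i$. Again by Leibniz, $D([a,y])=[D(a),y]+[a,D(y)]$. The first term lies in $[M_3,L_i]$, which by Proposition \ref{propfeigin} is contained in $L_{i+2}$. The second term lies in $[A,D(L_i)]\subset [A,L_{i+1}]=L_{i+2}$ by the inductive hypothesis. Hence $D([a,y])\in L_{i+2}$, and since such elements span $L_{i+1}$, we get $D(L_{i+1})\subset L_{i+2}$, completing the induction.

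I do not anticipate a genuine obstacle here: the only nontrivial input is Proposition \ref{propfeigin}, $[M_3,L_j]\subset L_{j+2}$, which holds over any ring $R$ and is already proved in the excerpt; everything else is the Leibniz rule and bookkeeping with spanning sets. The one point that deserves a line of care is that $D$ applied to a spanning set determines $D$ on the span (true since $D$ is $R$-linear and $L_{i+2}$ is an $R$-submodule), so checking the inclusion on generators suffices. One could also phrase the whole argument uniformly: for $i\ge 2$, $L_i=[A,L_{i-1}]$ with $L_1=A$, and the two cases of the Leibniz expansion feed respectively into $[M_3,L_{i-1}]\subset L_{i+1}$ (from Proposition \ref{propfeigin}, using $D(A)\subset M_3$) and into $[A,D(L_{i-1})]$, handled by induction — so the base case $i=2$ is really just the instance where the second term is $[A,D(A)]\subset[A,M_3]\subset L_3$ as well.
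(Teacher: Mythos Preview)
Your proof is correct. The paper's argument is organized a bit differently: instead of inducting on $i$, it applies the Leibniz rule once to the full iterated commutator,
\[
D[a_1,\dots,a_i]=\sum_{j=1}^i [a_1,\dots,Da_j,\dots,a_i],
\]
and then uses the Jacobi identity (essentially Lemma~\ref{multilin}) to rewrite each summand as a linear combination of terms $[a_{\sigma(1)},\dots,a_{\sigma(i-1)},Da_{\sigma(i)}]$ with $Da_{\sigma(i)}$ in the innermost slot; then only the base case $[A,M_3]\subset L_3$ of Proposition~\ref{propfeigin} is needed to conclude. Your inductive version trades that Jacobi rearrangement for the full strength of Proposition~\ref{propfeigin} (the case $[M_3,L_i]\subset L_{i+2}$), which is itself proved in the paper by the same Jacobi trick --- so the two proofs are really repackagings of the same content, with your induction arguably the tidier presentation.
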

  
  \begin{proof} By the Leibniz rule we have
$$
D[a_1,a_2,\cdots ,a_i]=\sum\limits_{j=1}^{i} [a_1,\cdots,[Da_j,\cdots,a_i]].
$$
  Using the Jacobi identity, this can be written as a linear combination of elements of the form 
 $[a_{\sigma(1)},...,[a_{\sigma(i-1)},Da_{\sigma(i)}]]$, where $\sigma\in S_i$.  
But we know that $[a_{\sigma(i-1)},Da_{\sigma(i)}]\in L_3$ since $[M_3,A]\subset L_3$
by Proposition \ref{propfeigin}. 
  Hence $D[a_1,\cdots ,a_i]\in L_{i+1}.$
  \end{proof}
  
  \quad  So we get $D|_{B_i} =0$ for all $i\geq 2$ and $D|_{\overline{B}_1} =0$. Also we have $$D(M_i)=D
  (AL_i)\subset D(A)L_i+ AD(L_i),$$ but $D(A)L_i\subset M_3L_i\subset M_{i+1}$ by Theorem \ref{thmguptalevin} and $AD(L_i)\subset AL_{i+1}=M_{i+1}$ by Proposition \ref{dli}. 
  Therefore, $D(M_i)\subset M_{i+1}.$ So $D|_{N_i}=0$ for all $i\ge 1$.
  
  Thus, we have established the following proposition. 
  
  \begin{proposition}
  The action of the Lie algebra ${\rm Der}(A)$ on $\overline{B}_1$, $B_i$ for $i\ge 2$, and $N_i$ for $i\ge 1$ factors through the image of 
  the natural map $\psi: {\rm Der}(A)\to {\rm Der}(A/M_3)$.
  \end{proposition}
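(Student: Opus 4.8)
The statement to prove is the final Proposition: the action of $\mathrm{Der}(A)$ on $\overline{B}_1$, on $B_i$ for $i\ge 2$, and on $N_i$ for $i\ge 1$ factors through the image of the natural map $\psi:\mathrm{Der}(A)\to\mathrm{Der}(A/M_3)$. The plan is to verify first that this map $\psi$ is well defined, and then to use the preceding computations in this section to show that the kernel of $\psi$ acts trivially on all the listed quotients. Since $\overline{B}_1$, the $B_i$, and the $N_i$ are all subquotients of $A$ on which $\mathrm{Der}(A)$ acts by restriction of the action on $A$ (derivations preserve $L_i$ and $M_i$, being Lie-algebra and ideal endomorphisms respectively, so they descend to all these quotients), it suffices to show that any $D\in\mathrm{Der}(A)$ that vanishes on $A/M_3$ — equivalently, any $D$ with $D(A)\subset M_3$ — acts by zero on each of them.

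First I would record that $\psi$ is well defined: a derivation $D$ of $A$ descends to a derivation of $A/M_3$ precisely when $D(M_3)\subset M_3$, and since $M_3$ is a characteristic ideal (it is $A\cdot L_3\cdot A$, built from iterated commutators, hence preserved by every algebra automorphism and every derivation), this holds automatically. So $\psi$ is an honest Lie algebra homomorphism and its kernel is exactly $\{D\in\mathrm{Der}(A): D(A)\subset M_3\}$. Next, for such a $D$ the earlier results in this section apply verbatim: Proposition \ref{dli} gives $D(L_i)\subset L_{i+1}$ for all $i\ge 2$, hence $D|_{B_i}=0$ for $i\ge 2$ and $D|_{\overline{B}_1}=0$ (for the latter, note $D(L_1)=D(A)\subset M_3$, and $M_3\cap L_2$ maps to $0$ in $\overline{B}_1=B_1/Z$ by the definition $Z=M_3/(M_3\cap L_2)$, while the part of $D(A)$ not in $L_2$ is killed when passing to $B_1=A/L_2$); and the displayed computation $D(M_i)\subset M_{i+1}$, using $D(A)L_i\subset M_3 L_i\subset M_{i+1}$ (Theorem \ref{thmguptalevin}) and $A\,D(L_i)\subset A L_{i+1}=M_{i+1}$ (Proposition \ref{dli}), gives $D|_{N_i}=0$ for all $i\ge 1$.

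Assembling these, the kernel of $\psi$ acts trivially on $\overline{B}_1$, on $B_i$ for $i\ge 2$, and on $N_i$ for $i\ge 1$, so the $\mathrm{Der}(A)$-action on each of these factors through $\psi(\mathrm{Der}(A))\subset\mathrm{Der}(A/M_3)$, which is the assertion. Strictly speaking this Proposition is essentially just the formal packaging of the two displayed computations immediately preceding it, so there is no real obstacle; the only point requiring a moment's care is the $\overline{B}_1$ case, where one must be careful that "$D(A)\subset M_3$" together with the definition of $Z$ as the image of $M_3$ in $B_1$ really does force $D$ to annihilate $\overline{B}_1$ — this is where the precise definition $Z=M_3/(M_3\cap L_2)$ and the fact that $D$ already lands in $L_3\subset L_2$ after one bracket (Proposition \ref{propfeigin}) are used. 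I would state the argument in two or three lines, citing Propositions \ref{dli} and \ref{propfeigin} and Theorem \ref{thmguptalevin}, and conclude.
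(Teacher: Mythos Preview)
Your approach is correct and is exactly the paper's: the proposition is nothing more than the formal packaging of the two computations immediately preceding it (Proposition \ref{dli} for $B_i$ and $\overline{B}_1$, and the displayed inclusion $D(M_i)\subset M_{i+1}$ for $N_i$), together with the observation that the kernel of $\psi$ consists precisely of derivations $D$ with $D(A)\subset M_3$.

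One small slip in your write-up of the $\overline{B}_1$ case: you wrote ``the part of $D(A)$ not in $L_2$ is killed when passing to $B_1=A/L_2$,'' but it is the part \emph{in} $L_2$ that dies in $B_1$; the remaining part (the image of $D(A)\subset M_3$ in $B_1$) lands in $Z$ by the very definition of $Z$ as the image of $M_3$ in $B_1$, and hence vanishes in $\overline{B}_1=B_1/Z$. With that correction the argument is clean.
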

  
  \begin{proposition}
  The map $\psi: {\rm Der}(A_n)\to {\rm Der}(A_n/M_3)$ is surjective.
  \end{proposition}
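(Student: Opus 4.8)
The plan is to exhibit, for each of a spanning set of derivations of $A_n/M_3 = \Omega_\ast^{\rm even}$, an explicit preimage under $\psi$ coming from a derivation of the free algebra $A_n$. Since $A_n$ is free, any assignment $x_i \mapsto a_i$ with $a_i \in A_n$ extends uniquely to a derivation $D \in {\rm Der}(A_n)$, and $\psi(D)$ is simply the induced derivation on $A_n/M_3$. So the task reduces to: (i) identify a convenient generating set for ${\rm Der}(\Omega_\ast^{\rm even})$ as a Lie algebra (or even just a spanning set as a module, since we only need surjectivity of a linear map), and (ii) lift each generator.

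First I would use the identification $\phi: A_n/M_3 \xrightarrow{\sim} \Omega_\ast^{\rm even}$ from Theorem \ref{fsthm}(1), under which $A_n/M_3$ has the explicit basis $x_1^{m_1}\cdots x_n^{m_n}\, dx_{i_1}\wedge\cdots\wedge dx_{i_{2r}}$ (with $i_1 < \cdots < i_{2r}$), the $dx_i$ being central and square-zero. A derivation of this algebra is determined by its values on the generators $x_i$; but one must then check that the values are constrained only by the defining relations. The cleanest route is to observe that the obvious candidate derivations — those sending $x_i \mapsto$ (a monomial in the $x_j$) times (a product of an even number of $dx_k$'s), i.e. $x_i \mapsto \phi(\text{monomial}\cdot u_{j_1 k_1}\cdots)$ — already span ${\rm Der}(\Omega_\ast^{\rm even})$, essentially because $\Omega^{\rm even}_\ast$ is generated as an algebra by the $x_i$ and $\Omega^{\rm even,+} = \phi(M_2)$ is in the "center up to $\ast$" in a controlled way. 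Each such candidate derivation visibly lies in the image of $\psi$: lift $x_i \mapsto w$ to the free-algebra derivation $x_i \mapsto \widetilde w$, where $\widetilde w$ is any lift of $w$ built from the $x_i$ and commutators $[x_j,x_k]$, which exists since $\phi$ is surjective and $\widetilde\phi([x_j,x_k]) = dx_j\wedge dx_k$.

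The key structural point, and the step I expect to be the main obstacle, is step (i): showing that the derivations of $\Omega_\ast^{\rm even}$ that "obviously lift" actually exhaust ${\rm Der}(\Omega_\ast^{\rm even})$ — equivalently, that an arbitrary derivation $\delta$ of $\Omega_\ast^{\rm even}$, applied to the generators $x_i$, lands in the subspace $\Omega^{\rm even}$ (all even forms, not just positive-degree ones) and that no further relations among the $\delta(x_i)$ need to be imposed. Here one uses that $\delta(x_i)$ can be an arbitrary element of $\Omega_\ast^{\rm even}$ a priori, and that the relations in $A_n/M_3$ among the $u_{jk} = [x_j,x_k]$ (namely $u_{ij}u_{il}=0$, $u_{ij}u_{kl}+u_{kj}u_{il}=0$, and centrality of the $u_{jk}$) are automatically respected because $d(\text{anything})$ squares to zero and is central for the $\ast$-product — so there is no obstruction to extending a free-algebra derivation down to $A_n/M_3$ and the map on generating data is onto. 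Concretely: given $\delta \in {\rm Der}(\Omega^{\rm even}_\ast)$, set $w_i = \delta(x_i) \in \Omega^{\rm even}_\ast$, pick lifts $\widetilde w_i \in A_n$ with $\widetilde\phi(\widetilde w_i) = w_i$ (possible by surjectivity of $\widetilde\phi$ composed with the projection), let $D\in{\rm Der}(A_n)$ be the unique derivation with $D(x_i)=\widetilde w_i$, and verify $\psi(D) = \delta$ by checking they agree on the algebra generators $x_i$ of $A_n/M_3$ — which holds by construction. The only thing to be careful about is that a derivation of an algebra $A/M_3$ is indeed determined by its values on the images of algebra generators of $A$, which is immediate since those images generate $A/M_3$ as an algebra.

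Thus the proof is short: freeness of $A_n$ gives lifts of generator-assignments to ${\rm Der}(A_n)$; surjectivity of $\widetilde\phi$ gives that every desired generator-value in $A_n/M_3$ is hit; and a derivation downstairs is pinned down by its generator-values, so $\psi$ is onto. I would write it as: pick $\delta$, form the $w_i$, lift, build $D$, done.
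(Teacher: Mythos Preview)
Your final paragraph is exactly the paper's proof: given $\delta\in{\rm Der}(A_n/M_3)$, set $\alpha_i=\delta(x_i)$, lift to $a_i\in A_n$ using surjectivity of $\widetilde\phi$, define $D\in{\rm Der}(A_n)$ by $D(x_i)=a_i$ (freeness), and conclude $\psi(D)=\delta$ since two derivations agreeing on algebra generators coincide. That is the entire argument, and it is correct.

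Everything you wrote before that --- your ``step (i)'', the attempt to describe a spanning set of derivations of $\Omega_\ast^{\rm even}$, the discussion of whether the relations among the $u_{jk}$ obstruct anything --- is unnecessary. You are not constructing derivations of $A_n/M_3$ from scratch; you are handed one and asked to lift it. There is no obstacle to identify: the three ingredients (freeness of $A_n$, surjectivity of $A_n\twoheadrightarrow A_n/M_3$, and the fact that derivations are determined on generators) do all the work immediately. The paper's proof is four lines for this reason.
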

  
  \begin{proof}
  Let $\delta \in {\rm Der}(A_n/M_3)$ be a derivation, and let $\delta(x_i)=\alpha_i$. Let $a_i\in A_n$ be such that $\widetilde{\phi}(a_i)=\alpha_i$, and let $D=\sum a_i\frac{\partial}{\partial x_i}$, which means $D(x_i)=a_i$. Then $\psi(D)=\delta.$
  \end{proof}
  
  So we have an action of ${\rm Der}(A/M_3)={\rm Der}(\Omega_\ast^{\rm even})$ on $\overline{B}_1, B_i$ for $i\ge 2$, and $N_i$. 
  
  Now note that $\ast$ is an invariant operation, so $W_n\subset {\rm Der}(\Omega_{\ast}^{\rm even} ).$ This gives an action of $W_n$ on $\overline{B}_1, B_i $, $i\ge 2$, and $N_i, i\geq 1$, 
  and proves Theorem \ref{wnact}.
  
  \begin{remark}
We leave it to the reader to check that the action of $W_n$ on $\overline{B}_1=\Omega^{\rm even}/\Omega^{\rm even}_{\rm exact},\ N_1=\Bbb{C}[x_1,\cdots, x_n]$ and $N_2=\Omega^{\rm even,+}$ is the standard action.
  \end{remark}

\section{Representations of $W_n$} 

Now let us discuss the representation theory of $W_n$. 
 
 \begin{definition} 
 A module $M$ over $W_n$ belongs to the category $\mathcal{C}$ if the operators $x_i\frac{\partial}{\partial x_i}\in W_n$ on $M$ are semisimple with finite dimensional joint eigenspaces, and their eigenvalues are in $\Bbb{Z}_{\geq0}$.  
\end{definition} 

\begin{example}\label{vecfiel}
\begin{enumerate}
\item $M=\Bbb{C}[x_1,x_2,\dotsc,x_n]$, with the tautological action of vector fields $v=\sum f_i(x_1,...,x_n)\frac{\partial}{\partial x_i}$. Then $M$ is graded by $\Bbb{Z}_{\geq0}^n$: 
one has $M=\underset{i_1,\dotsc,i_n\geq 0}{\bigoplus}M[i_1,\dotsc,i_n]$. This is a grading 
by eigenvalues of $x_i\frac{\partial}{\partial x_i}$, so $M\in \mathcal{C}$. 

\item {\it Tensor field modules.} This is a generalization of the previous example. 
Namely, let $V=(\Bbb{C}^n)^\ast$, and let $T_N=SV\otimes V^{\otimes N}$ be the space of tensors of type $(N,0)$ on $V^\ast =\Bbb{C}^n$. 
This space has a basis $x_1^{m_1}\dotsc x_n^{m_n}dx_{i_1}\otimes\dotsc \otimes dx_{i_N}.$ The vector field 
$v=\sum_i f_i(x_1,...,x_n)\frac{\partial}{\partial x_i}$ acts by the Leibniz rule, as follows: 

$\begin{array}{lcl}
v(\phi dx_{i_{1}}\otimes\dotsc \otimes dx_{i_{N}})=v(\phi)dx_{i_{1}}\otimes\dotsc \otimes dx_{i_{N}}\\
+\phi dv(x_{i_{1}})\otimes dx_{i_{2}}\otimes\dotsc
 \otimes dx_{i_{N}}
 +\dotsc
  +\phi dx_{i_1}\otimes\dotsc \otimes dv(x_{i_N}).
\end{array}$
Note that $T_0=\Bbb C[x_1,...,x_n]$, the module from the previous example, and 
$T_1=\Omega^1$, the module of 1-forms. 
\end{enumerate}
\end{example}

The modules $T_N$, in general, are not irreducible or even indecomposable. 
Namely, $T_N$ carries an action of the symmetric group 
$S_N$ which commutes with $W_n$, and therefore 
decomposes into isotypic components according to the type of the $S_N$-symmetry. 
Let us consider this decomposition in more detail. 

Recall that irreducible representations of $S_N$ are labeled by partitions of $N$. 
If $\lambda$ is a partition of $N$, let $\pi_{\lambda}$ denote the corresponding representation of $S_N$. 
Then we can define the $W_n$-module 
$$
\mathcal{F}_\lambda:={\rm Hom}_{S_N}(\pi_\lambda, T_N)=SV\otimes S_\lambda(V),$$
where $S_\lambda(V):={\rm Hom}_{S_N}(\pi_\lambda,V^{\otimes N})$ is the corresponding 
Schur functor. Clearly $\mathcal{F}_\lambda\neq 0 \iff$ length$(\lambda)\leq n$. 

 \begin{example}
 Suppose that $\lambda=(1^N),\ N\leq n.$ Then $\mathcal{F}_\lambda=\Omega^N(\Bbb{C}^n).$ In this case $\mathcal{F}_\lambda$ is not irreducible if $N\neq n$. Indeed we have an exact sequence $$
 0\to \Omega^N_{\rm closed}\to \Omega^N\to \Omega^{N+1}_{\rm exact}\to 0, $$
 where $\Omega^{N+1}_{\rm exact}=\Omega^{N+1}_{\rm closed}$. Note that $\Omega^n_{\rm closed}=\Omega^n={\mathcal F}_{1^n}$. 
 \end{example}
 
 \begin{proposition}(\cite{R})
 $\Omega_{\rm closed}^N$ is irreducible for all $N=0,\dotsc,n$.
 \end{proposition}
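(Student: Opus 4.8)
The plan is to prove that $\Omega_{\rm closed}^N$ is an irreducible $W_n$-module for each $N=0,\dots,n$. First I would set up the standard framework for studying modules in the category $\mathcal{C}$: a nonzero submodule $U\subset \Omega_{\rm closed}^N$ is graded by the Euler eigenvalue (degree of polynomial coefficients), and the lowest-degree component $U[d_0]$ is a module over the ``Lie algebra of vector fields vanishing to first order at the origin'', whose reductive quotient is $\mathfrak{gl}_n$ acting on the fiber. So the first step is to identify the lowest-degree piece of $\Omega_{\rm closed}^N$ as a $\mathfrak{gl}_n$-representation: for $N\ge 1$, closed forms of minimal polynomial degree are the constant-coefficient forms $dx_{i_1}\wedge\dots\wedge dx_{i_N}$, which form the irreducible $\mathfrak{gl}_n$-module $\Lambda^N(\Bbb C^n)^*$ (degree $0$); for $N=0$ the module is $\Bbb C[x_1,\dots,x_n]$ and closedness is automatic, and one checks irreducibility directly since $\Bbb C[x_1,\dots,x_n]$ is generated by $1$ under $W_n$. (The case $N=0$ should perhaps be treated separately or noted as classical.)

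The key steps, in order, are: (i) show any nonzero graded submodule $U$ contains a nonzero vector in the minimal degree — this requires showing that the lowering operators $\partial/\partial x_i \in W_n$ cannot annihilate a whole bottom layer, i.e. that no closed form of positive degree is killed by all $\partial/\partial x_i$, which follows because a form killed by every $\partial_i$ has constant coefficients hence degree $0$; (ii) since the bottom layer $\Lambda^N(\Bbb C^n)^*$ is $\mathfrak{gl}_n$-irreducible, $U$ must contain all of it; (iii) show that the constant forms generate $\Omega_{\rm closed}^N$ as a $W_n$-module — concretely, apply the field $x_j \partial_k$ and more general $f\,\partial_k$ to $dx_{i_1}\wedge\dots\wedge dx_{i_N}$ and check one can produce $df\wedge dx_{i_2}\wedge\dots\wedge dx_{i_N}$ for arbitrary polynomial $f$, and that such wedges (i.e. exact forms wedge constants) together with iterated applications span all closed forms. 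Here I would use the Cartan homotopy formula / the fact that $\Omega_{\rm closed}^N = \Omega_{\rm exact}^N$ for $N\ge 1$ (Poincaré lemma for polynomial forms), so it suffices to generate all exact forms $d\beta$, and exactness lets one reduce $\beta$ to a sum of monomial $(N-1)$-forms, each of which is reachable from a constant $(N-1)$-form by the action computed above.

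The main obstacle I expect is step (iii): proving that $W_n$-translates of the constant forms exhaust all of $\Omega_{\rm closed}^N$, not just a proper submodule. The subtlety is that acting by vector fields on $dx_{i_1}\wedge\dots\wedge dx_{i_N}$ produces forms $dg\wedge dx_{i_2}\wedge\dots\wedge dx_{i_N}$ with $g$ ranging over polynomials, but one must then argue by induction on $N$ (peeling off one $dx$ at a time) or by a direct combinatorial argument that these, under the algebra structure and repeated $W_n$-action, give every closed form of every degree. A clean way to organize this is: let $U$ be the submodule generated by the bottom layer; show $U$ is stable under wedging with $d(\text{polynomial})$ in a suitable sense, or equivalently that $U$ contains $d(x^m dx_{i_1}\wedge\dots\wedge dx_{i_{N-1}})$ for all monomials $x^m$, by repeatedly applying fields of the form $x^p x_\ell \partial_\ell$; then invoke the polynomial Poincaré lemma to conclude $U = \Omega^N_{\rm exact} = \Omega^N_{\rm closed}$. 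Verifying the needed action formulas is routine Leibniz-rule bookkeeping, so I would state them as a lemma and carry out one representative case.
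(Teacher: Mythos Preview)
The paper does not give its own proof of this proposition; it simply cites Rudakov \cite{R}. So there is nothing in the paper to compare your argument against, and your outline is the standard direct verification that one would expect. For $N\ge 1$ your plan is correct: apply $\partial_i$'s to push a lowest-degree vector of a nonzero submodule down to the constant layer $\Lambda^N(\Bbb C^n)^*$, use $\mathfrak{gl}_n$-irreducibility of that layer, and then generate upward. In fact step (iii) is easier than you suggest: acting on $dx_{j_0}\wedge dx_{j_1}\wedge\cdots\wedge dx_{j_{N-1}}$ by $L_{f\partial_{j_0}}$ (with $j_0\notin\{j_1,\dots,j_{N-1}\}$) produces exactly $df\wedge dx_{j_1}\wedge\cdots\wedge dx_{j_{N-1}}$, and since $\Omega^{N-1}$ is spanned by monomial $(N-1)$-forms, these already span $d\Omega^{N-1}=\Omega^N_{\rm exact}=\Omega^N_{\rm closed}$ (polynomial Poincar\'e lemma). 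No induction on $N$ or iterated action is needed.

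Your treatment of $N=0$, however, is wrong on two counts. First, closedness is \emph{not} automatic for $0$-forms: $df=0$ forces $f$ to be constant, so $\Omega^0_{\rm closed}=\Bbb C$, not $\Bbb C[x_1,\dots,x_n]$. Second, $\Bbb C[x_1,\dots,x_n]$ is not irreducible as a $W_n$-module (the constants form a proper submodule), so ``generated by $1$'' would not suffice even if that were the module in question. Fortunately the correct statement makes the case trivial: $\Omega^0_{\rm closed}=\Bbb C$ is one-dimensional, hence irreducible.
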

 
 \begin{theorem}\label{Rud}(\cite{R})
 \begin{enumerate}
 \item $\mathcal{F}_\lambda$ is irreducible unless $\lambda= 1^N$, $N<n$.
 \item Any simple object in $\mathcal{C}$ is isomorphic to $\mathcal{F}_\lambda( \lambda\neq 1^N)$ or to $\Omega _{\rm closed}^N( N=0,\dotsc,n)$,
 while these modules are pairwise non-isomorphic. 
 \end{enumerate}
 \end{theorem}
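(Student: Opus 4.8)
The plan is to use the standard grading $W_n=\bigoplus_{k\ge-1}W_n[k]$, where $W_n[k]$ is spanned by the fields $f\partial_i$ with $f$ homogeneous of degree $k+1$; thus $W_n[-1]=\langle\partial_1,\dots,\partial_n\rangle$, $W_n[0]=\mathfrak{gl}_n=\langle x_i\partial_j\rangle$, and $W_n[\ge1]:=\bigoplus_{k\ge1}W_n[k]$ is a subalgebra of higher fields. On an object $M$ of $\mathcal C$, the joint eigenspace decomposition for the $x_i\partial_i$ refines the $\Bbb Z_{\ge0}$-grading by the eigenvalue of $E=\sum x_i\partial_i$; this grading is bounded below with finite-dimensional pieces, $W_n[-1]$ lowers it by $1$, $W_n[0]$ preserves it, and $W_n[\ge1]$ raises it. The one geometric fact I will quote (verified by a direct comparison of the two constructions) is that the tensor field module $\mathcal F_\lambda=\Bbb C[x_1,\dots,x_n]\otimes S_\lambda(V)$ is, as a graded $W_n$-module, the restricted (graded) coinduced module $\mathrm{Coind}_{W_n[\ge0]}^{W_n}\bigl(S_\lambda(V)\bigr)$, where $S_\lambda(V)$ is regarded as a $W_n[\ge0]$-module with $W_n[\ge1]$ acting by $0$: concretely, $\mathcal F_\lambda$ has bottom graded piece $S_\lambda(V)$ in degree $|\lambda|$, the $\partial_i$ act by $\partial/\partial x_i$ on the polynomial factor (so the only elements of $\mathcal F_\lambda$ annihilated by all $\partial_i$ lie in the bottom), and for every $M$ in $\mathcal C$ one has $\Hom_{W_n}(M,\mathcal F_\lambda)=\Hom_{\mathfrak{gl}_n}\bigl(M[\,|\lambda|\,],\,S_\lambda(V)\bigr)$.

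First I would prove part (2). Let $M$ be simple in $\mathcal C$ and $M_{\min}=M[d_0]\ne0$ its lowest graded piece; it is finite-dimensional, is annihilated by $W_n[-1]$, and is a module over $W_n[0]=\mathfrak{gl}_n$. It is $\mathfrak{gl}_n$-irreducible: for a nonzero $\mathfrak{gl}_n$-submodule $P\subseteq M_{\min}$, ordering a PBW basis of $U(W_n)$ with the $W_n[-1]$-factors rightmost and using $W_n[-1]M_{\min}=0$ gives $U(W_n)P=U(W_n[\ge1])P\subseteq M[\ge d_0]$, whose degree-$d_0$ part equals $P$; simplicity of $M$ forces $U(W_n)P=M$, hence $P=M_{\min}$. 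Since every $x_i\partial_i$ acts on $M$ with eigenvalues in $\Bbb Z_{\ge0}$, the $\mathfrak{gl}_n$-weights of $M_{\min}$ are nonnegative, so $M_{\min}\cong S_\lambda(V)$ for a partition $\lambda$ of length $\le n$ and $d_0=|\lambda|$. Now $M[>d_0]$ is a $W_n[\ge0]$-submodule of $M$ (as $W_n[\ge0]$ never lowers the grading), so the projection $M\twoheadrightarrow M/M[>d_0]=S_\lambda(V)$ is a $W_n[\ge0]$-map; by the $\Hom$-identity above it corresponds to a nonzero, hence injective, $W_n$-map $M\hookrightarrow\mathcal F_\lambda$. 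On the other hand, every nonzero submodule $K\subseteq\mathcal F_\lambda$ contains the bottom $S_\lambda(V)$: a lowest-degree element of $K$ must have degree $|\lambda|$ (otherwise some $\partial_i$ of it is a nonzero element of $K$ of smaller degree), so $K$ meets the irreducible $\mathfrak{gl}_n$-module $S_\lambda(V)$ and hence contains it. Therefore $L_\lambda:=U(W_n)S_\lambda(V)$ is contained in every nonzero submodule of $\mathcal F_\lambda$, so $L_\lambda$ is simple and is the unique simple submodule of $\mathcal F_\lambda$; since the image of $M$ is a simple submodule, $M\cong L_\lambda$. Thus every simple object of $\mathcal C$ is of the form $L_\lambda$, and distinct $\lambda$ give non-isomorphic $L_\lambda$ because their bottoms $S_\lambda(V)$ are non-isomorphic $\mathfrak{gl}_n$-modules.

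It remains to identify $L_\lambda$, which is part (1): $\mathcal F_\lambda$ is irreducible exactly when $\mathcal F_\lambda=L_\lambda=U(W_n[\ge1])S_\lambda(V)$, i.e.\ when $\mathcal F_\lambda$ is generated by its bottom. If $\lambda=1^N$ with $N<n$ this fails: here $\mathcal F_{1^N}=\Omega^N$, the de Rham differential $d\colon\Omega^N\to\Omega^{N+1}$ is a nonzero $W_n$-map (nonzero because $N<n$), and $\Omega^N_{\rm closed}=\ker d$ is a proper submodule which, containing all constant forms, contains the bottom; so $\mathcal F_{1^N}$ is reducible, and $L_{1^N}=\Omega^N_{\rm closed}$, which is irreducible by the Proposition preceding the theorem. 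For $N=n$ one has $\Omega^n_{\rm closed}=\Omega^n=\mathcal F_{1^n}$, already irreducible, and letting $N$ run over $0,\dots,n$ produces all the modules $\Omega^N_{\rm closed}$; together with part (2) this gives the complete, irredundant list $\{\mathcal F_\lambda:\lambda_1\ge2\}\cup\{\Omega^N_{\rm closed}:0\le N\le n\}$ of simples. Conversely, when $\lambda_1\ge2$ (or $\lambda=1^n$) I claim $\mathcal F_\lambda$ is generated by its bottom; for $\lambda=1^n$ this is the quoted irreducibility of $\Omega^n$, and for $\lambda_1\ge2$ one argues by induction on the grading, where it suffices to show that for each $d\ge|\lambda|$ the $\mathfrak{gl}_n$-equivariant raising maps $\bigoplus_{k\ge1}W_n[k]\otimes\mathcal F_\lambda[d+1-k]\to\mathcal F_\lambda[d+1]$ are jointly surjective. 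This surjectivity --- equivalently, the absence in $\mathcal F_\lambda$ of any ``singular vectors'' beyond those assembling the de Rham complex --- is the technical heart of the theorem and the step I expect to be the main obstacle; the hypothesis $\lambda_1\ge2$ is exactly what forces it (for $\lambda=1^N$ with $N<n$ the image of these maps is the proper submodule $\Omega^N_{\rm closed}$, the obstruction being the de Rham class). I would establish it either by a direct computation with the $\mathfrak{gl}_n$-isotypic decomposition of each $\mathcal F_\lambda[d]$, or by restricting $\mathcal F_\lambda$ to the subalgebra $\mathfrak{sl}_{n+1}\subset W_n$ of projective vector fields, identifying it there with a parabolic Verma module, and invoking the known (BGG-type) structure of submodules of such modules. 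The non-isomorphism claim then follows as before by comparing lowest graded pieces.
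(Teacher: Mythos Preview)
The paper does not actually prove this theorem: it is stated with a citation to Rudakov \cite{R}, and no proof is given in the text. So there is no ``paper's own proof'' to compare against; the relevant comparison is with Rudakov's original argument.

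Your framework is the standard one and is essentially Rudakov's: grade $W_n$ by polynomial degree minus one, observe that objects of $\mathcal C$ are bounded-below graded with finite pieces, show that a simple $M$ has irreducible $\mathfrak{gl}_n$-bottom $S_\lambda(V)$ (your argument here is correct, including the deduction that $\lambda$ is a partition from nonnegativity of the $x_i\partial_i$-eigenvalues), embed $M$ into $\mathcal F_\lambda$ via Frobenius reciprocity for the coinduced module, and identify $M$ with the socle $L_\lambda$ of $\mathcal F_\lambda$. The argument that every nonzero submodule of $\mathcal F_\lambda$ contains the bottom, hence $L_\lambda$ is the unique simple submodule, is clean and correct. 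The identification $L_{1^N}=\Omega^N_{\rm closed}$ is also right, since the bottom consists of constant (hence closed) forms and $d$ is $W_n$-equivariant.

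You are honest that the crux --- irreducibility of $\mathcal F_\lambda$ when $\lambda_1\ge 2$, equivalently the absence of singular vectors above the bottom --- is left as a gap. This is indeed the entire content of the theorem; everything else is soft. Of your two suggested routes, the direct one (analyzing possible $\mathfrak{gl}_n$-types of singular vectors for $W_n[\ge1]$, i.e.\ vectors annihilated by $W_n[-1]$ in degrees above $|\lambda|$) is what Rudakov actually does, and it requires a nontrivial computation with the action of the degree-$1$ piece $W_n[1]\cong S^2(\Bbb C^n)\otimes(\Bbb C^n)^*$ to rule out all candidates except those arising from the de Rham differential. The $\mathfrak{sl}_{n+1}$ restriction is suggestive but does not by itself settle the question, since $\mathcal F_\lambda$ is typically not a single parabolic Verma module for $\mathfrak{sl}_{n+1}$. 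So your outline is sound, but to complete it you would need to carry out Rudakov's singular-vector analysis (or an equivalent), which is the substance of \cite{R}.
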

  
We showed above that $\overline{B}_1,\ B_i(i\geq2)$ and $N_i(i\geq1)$ are $W_n-$ modules. It is easy to see that they belong to $\mathcal{C}$. Hence they admit a composition series with successive quotients $\mathcal{F}_\lambda$, $\Omega_{\rm closed}^ N$, where each $\mathcal{F}_\lambda$ or $\Omega_{\rm closed}^ N$ has finite multiplicity. 

\begin{proposition}\label{finlength}  The $W_n$-modules $\overline{B_1}$, $B_m$ for $m\ge 2$, and $N_m$ have finite length.  
\end{proposition}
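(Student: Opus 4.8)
The plan is to show that each of $\overline{B}_1$, $B_m$ ($m\ge 2$) and $N_m$ ($m\ge 1$), viewed as an object of $\mathcal{C}$, has a finite composition series. The cases $\overline{B}_1$, $N_1$, $N_2$ and $B_2$ are already settled by the explicit descriptions obtained from Theorem \ref{fsthm}: indeed $N_1=\Bbb C[x_1,\dots,x_n]=\mathcal F_{(0)}$, $N_2=\Omega^{\mathrm{even},+}=\bigoplus_{1\le i\le n/2}\Omega^{2i}$ is a finite direct sum of $\mathcal F_{(1^{2i})}$'s, $B_2=\Omega^{\mathrm{even}}_{\mathrm{exact}}$ is a finite sum of $\Omega^{2i}_{\mathrm{closed}}$'s, and $\overline{B}_1\cong\Omega^{\mathrm{even}}/\Omega^{\mathrm{even}}_{\mathrm{exact}}$ is likewise visibly of finite length. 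So the real content is $B_m$ and $N_m$ for $m\ge 3$.

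The key idea is to combine Theorem \ref{thmBapat} (for odd indices $M_iM_j\subset M_{i+j-1}$ when $1/6\in R$, which holds over $\Bbb C$) with Jennings' bound, Corollary \ref{Jenn1} ($M_2^r\subset M_3$ once $n\le 2r-1$, hence $M_2^{(k-2)r}\subset M_k$ for $k\ge 3$), to control how $B_m$, $N_m$ sit inside the associated graded of $A_n$. First I would use the $W_n$-module structure from Theorem \ref{wnact} together with the grading by the Euler operator $E$: every graded piece $B_m[d]$, $N_m[d]$ is finite dimensional, so $B_m$, $N_m$ lie in $\mathcal{C}$, and by Rudakov's Theorem \ref{Rud} each $\mathcal F_\lambda$ or $\Omega^N_{\mathrm{closed}}$ appears with finite multiplicity in any composition series. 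The point to establish is that the set of $\lambda$ (and $N$) that can occur is \emph{finite}; equivalently, that the lowest degree $|\lambda|$ of a composition factor is bounded above. For this I would argue that multiplication in $A_n$ gives surjections $N_2^{\otimes (m-2)}\to N_m$ (using $N_iN_j$ maps onto a piece that generates $N_{i+j}$, together with Theorem \ref{thmguptalevin} to land in the right filtration layer), so that $N_m$, and hence $B_m\subset N_m$ after identifying $B_m$ with the image of $L_m$ in a suitable quotient (Theorem \ref{fsthm}(3) handles $m=2$, and for higher $m$ one uses $L_m/L_{m+1}\hookrightarrow M_m/M_{m+1}$ composed with the analogous maps), is a quotient of a tensor power of the finite-length module $N_2=\Omega^{\mathrm{even},+}$. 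A tensor product of two objects of $\mathcal C$ of finite length again has finite length when one tensors over $\Bbb C[x]$ appropriately — more precisely, $\mathcal F_\lambda\otimes_{\Bbb C[x]}\mathcal F_\mu$ decomposes by the Pieri/Littlewood–Richardson rule into finitely many $\mathcal F_\nu$'s with $|\nu|=|\lambda|+|\mu|$ — so the tensor power has finite length, and any quotient of a finite-length object is of finite length.

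The main obstacle I anticipate is making the surjection $N_2^{\otimes(m-2)}\twoheadrightarrow N_m$ (and its $B$-analogue) genuinely $W_n$-equivariant and degree-exact: the naive multiplication map $M_{i}\otimes M_j\to M_{i+j-2}$ lands two layers too low, and one must use Theorem \ref{thmBapat}/Theorem \ref{thmguptalevin} to correct the filtration degree and check that, after passing to the relevant subquotients, the induced map is still surjective onto $N_m$ (resp.\ $B_m$). A cleaner alternative, which I would pursue if the tensor-power argument gets bogged down, is to invoke Theorem \ref{hsbounds}-style degree bounds directly: show by induction on $m$, using $B_{m+1}=[A_{\le 2},B_m]$ (Theorem \ref{thmBapat2}, or Theorem \ref{thmBapat1}) and the fact that bracketing with the finite-dimensional space $A_{\le 2}$ raises the Euler degree by a bounded amount, that every composition factor $\mathcal F_\lambda$ of $B_m$ satisfies $|\lambda|\le C(m,n)$ for an explicit constant; since for each fixed bound there are only finitely many partitions $\lambda$ with $|\lambda|\le C(m,n)$ and length $\le n$, and each occurs with finite multiplicity, finite length follows. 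The $N_m$ case is then deduced from the $B_m$ case via the short exact sequences relating $M_i/M_{i+1}$ to the $L$-filtration, or handled symmetrically.
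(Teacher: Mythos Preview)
Your first strategy has a real gap. The surjection $N_2^{\otimes(m-2)}\twoheadrightarrow N_m$ you posit does not follow from anything in the paper: Gupta--Levin gives only $M_2\cdot M_2\subset M_2$, and Corollary \ref{Jenn1} gives $M_2^{(k-2)r}\subset M_k$, which is a \emph{containment}, not a surjection onto $M_m/M_{m+1}$. You yourself flag this as ``the main obstacle'' and then leave it; in fact there is no reason to expect products of elements of $M_2$ to generate $N_m$, so this line does not go through.

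Your fallback strategy is much closer to what the paper actually does, but the mechanism you state is not quite right. ``Bracketing with $A_{\le 2}$ raises the Euler degree by a bounded amount'' does \emph{not} by itself bound $|\lambda|$ for composition factors of $B_{m+1}$; it only says that $B_{m+1}[i]$ is a quotient of $A_{\le 2}[0]\otimes B_m[i]\oplus A_{\le 2}[1]\otimes B_m[i-1]\oplus A_{\le 2}[2]\otimes B_m[i-2]$. What this \emph{does} give, and what the paper uses, is an inductive bound on growth: starting from the explicit description of $B_2$ (Theorem \ref{fsthm}(4)), Theorem \ref{thmBapat2} yields $\dim B_m[i]\le C_m\, i^{n-1}$ for every $m\ge 2$. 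Since every simple object $Y\in\mathcal C$ has $\dim Y[i]$ asymptotic to a positive integer multiple of $i^{n-1}/(n-1)!$, an infinite composition series would force $\dim B_m[i]/i^{n-1}\to\infty$, a contradiction. The explicit bound $|\lambda|\le C(m,n)$ (Theorem \ref{hsbounds}) is a separate, harder statement and is not needed here. For $N_m$ your sketch is too vague; the paper's device is the identity $a[c,b]=[ac,b]-[a,b]c$, which combined with Theorem \ref{thmBapat1} gives $N_m\subset B_m+\sum_s B_m c_s$ for finitely many $c_s$ of bounded degree, whence the same growth estimate and finite length.
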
 

\begin{proof} Let $0\ne Y\in \C$, and let $d_i=\dim Y[i]$, the dimension of the subspace of elements of degree $i$ in $Y$. 
It is easy to see from Theorem \ref{Rud} that $d_i\sim \frac{Cn^i}{i!}$ as $i\to\infty$, where $C\ge 1$ is an integer.
Therefore, it suffices to show that 
$\dim B_m[i]\le C_mn^i$ for some $C_m>0$ and sufficiently large $i$. But this follows by induction in $m$ using Theorem \ref{thmBapat2}
from the case $m=2$, which is obtained  from Theorem \ref{fsthm}(4). Also, the statement about $\overline{B}_1$ follows 
from Theorem \ref{fsthm}(5). 

To prove the statement for $N_m$, note that by Theorem \ref{thmBapat1}, 
$N_m=AB_m=\sum_s A[c_s,B_{m-1}]$, where $c_s$ runs through the elements $x_i,x_ix_j$, and $x_i[x_j,x_k]$. 
Now, $a[c_s,b]=[ac_s,b]-[a,b]c_s$, so we get $N_m\subset B_m+\sum_sB_mc_s$. This means that 
 $\dim N_m[i]\le \widetilde{C}_mn^i$ for large $i$ (as we have a similar bound for $B_m$). 
This implies that $N_m$ have finite length.  
\end{proof}

\section{Proof of Theorem \ref{fsthm}(3)}

\subsection{Proof of Theorem \ref{fsthm}(3) for $n=2$}

 Now we will prove Theorem \ref{fsthm}(3) in the special case $n=2$. Recall that Theorem \ref{fsthm}(3) states that $L_2\cap M_3=L_3$, or, equivalently, 
 the natural surjective map 
 $$
 \theta: B_2\to \Omega^{\rm even}_{\rm exact}
 $$
is an isomorphism. In the case $n=2$, $\Omega^{\rm even}_{\rm exact}=\Omega^2=
 \Bbb C[x,y]dx\wedge dy$, which is a bigraded space with a 1-dimensional space in bidegree $(i,j)$ for all $i,j\ge 1$ and zero everywhere else. So, since $\theta$ is surjective, it suffices to prove the following proposition. 
 
\begin{proposition} If $A=A_2$ then $B_2$ is spanned by the (images of) the elements $[y^i,x^j]$. 
\end{proposition}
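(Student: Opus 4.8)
The plan is to reduce the spanning statement to a rank count using the filtration by degree that is implicit in the $\ast$-product, and then produce enough explicit relations among iterated commutators in $A_2$ to pin down $B_2$ exactly. Concretely, $B_2=L_2/L_3$ for $A=A_2=\mathbb{C}\langle x,y\rangle$ is bigraded by the degrees in $x$ and $y$, and in bidegree $(i,j)$ it is a finite-dimensional space; by Theorem \ref{fsthm}(4) (in the case $n=2$) we know that $\theta: B_2\to \Omega^{\rm even}_{\rm exact}=\Omega^2=\mathbb{C}[x,y]\,dx\wedge dy$ is surjective, and $\Omega^2$ has a one-dimensional piece in each bidegree $(i,j)$ with $i,j\ge 1$ (and is zero otherwise). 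So the claim ``$B_2$ is spanned by the images of $[y^i,x^j]$'' will follow once I show that $\dim B_2[i,j]\le 1$ for all $i,j$, together with the observation that $[y^i,x^j]$ lands in bidegree $(i,j)$ and has nonzero image under $\theta$ (the latter because $\phi([y^i,x^j])=d(y^i)\wedge d(x^j)=ij\,y^{i-1}x^{j-1}\,dy\wedge dx\ne 0$). Thus the real content is the upper bound $\dim B_2[i,j]\le 1$.

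First I would set up the combinatorics: $L_2[i,j]$ is spanned by polylinear-type brackets, and by Lemma \ref{multilin} (or rather its specialization: any element of $L_2$ that is homogeneous of bidegree $(i,j)$ is a linear combination of single commutators $[u,v]$ with $u,v$ monomials), and in fact by repeated use of the Leibniz-type expansion one reduces to brackets of the form $[x^ay^b,\dots]$. The cleanest route is to show that modulo $L_3$ every commutator $[w_1,w_2]$ of monomials of total bidegree $(i,j)$ is congruent to a scalar multiple of $[y^i,x^j]$. For this I would use the fact, established in the proof of Theorem \ref{fsthm}(1), that modulo $M_3$ the elements $u_{xy}=[x,y]$ are central and satisfy $u_{xy}^2=0$ — but more to the point, I want relations valid modulo $L_3$ (not just modulo $M_3$). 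The key identities are: $[w_1w_2,w_3]\equiv [w_1,w_3]w_2 + w_1[w_2,w_3] \pmod{L_3}$ is not quite an identity in $B_2$ (the RHS is in $M_2$, not $L_2$), so instead I would work with the identity $[ab,c]=[a,c]b+a[b,c]$ and its iterates together with the Jacobi identity to move all the ``commutator weight'' to one spot, reducing any bidegree-$(i,j)$ element of $L_2$ modulo $L_3$ to the span of the single element $[y^i,x^j]$.

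The main obstacle — and the step I expect to require genuine work — is precisely controlling the reduction modulo $L_3$: unlike $M_3$, the ideal $L_3$ is not two-sided, so I cannot freely multiply and the centrality/nilpotence tricks from the proof of part (1) do not directly apply. I anticipate needing a careful induction on the total degree $i+j$ (or on the ``commutator depth''), at each stage using the Jacobi identity to rewrite $[y^a x^b,\,\cdot\,]$ type expressions, peeling off one letter at a time and absorbing the correction terms into $L_3$ by the inductive hypothesis. An alternative, possibly cleaner, approach: use the already-proven surjectivity of $\theta$ and the $W_n$-module structure from Theorem \ref{wnact} — since $B_2$ and $\Omega^2$ are both objects of $\mathcal{C}$ and $\theta$ is a surjective $W_2$-map onto the irreducible module $\Omega^2=\mathcal{F}_{(1,1)}$ (irreducible since $\lambda=(1^n)$ with $n=2$, by Theorem \ref{Rud}), it would suffice to bound $\dim B_2[i,j]$ in a single bidegree, say the bottom one $(1,1)$, where $B_2[1,1]$ is clearly spanned by $[y,x]$, and then use the module structure to propagate. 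I would try the $W_2$-module argument first, as it avoids the delicate bracket gymnastics, and fall back on the direct combinatorial reduction if the module argument has a gap (e.g. if one needs to know $B_2$ has no extra ``junk'' below the image of the generating vectors, which is exactly what finite-length plus irreducibility of the target should rule out).
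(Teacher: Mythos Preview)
Your reduction to showing $\dim B_2[i,j]\le 1$ is correct, but your preferred $W_2$-module argument has a real gap. Surjectivity of $\theta$ onto the irreducible $\Omega^2$ together with $\dim B_2[1,1]=1$ does \emph{not} force $\ker\theta=0$: nothing prevents the kernel from being a nonzero $W_2$-submodule supported entirely in bidegrees with $i+j\ge 3$ (for instance some $\mathcal{F}_\lambda$ with $|\lambda|\ge 3$). Your suggestion that ``finite-length plus irreducibility of the target should rule this out'' fails on both counts: irreducibility of the \emph{target} says nothing about the kernel, and the finite-length statement for $B_2$ (Proposition \ref{finlength}) is itself proved using the isomorphism $B_2\cong\Omega^{\rm even}_{\rm exact}$, i.e.\ using Theorem \ref{fsthm}(3), so invoking it here is circular. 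There is no cheap non-circular way to ``propagate'' from a single bidegree.

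Your combinatorial fallback is much closer to what the paper actually does, but your sketch is missing the key mechanism. The paper does not count dimensions; it shows directly that every $[b,x^r]$ (and symmetrically $[b,y^r]$) with $b$ a monomial is congruent modulo $L_3$ to a multiple of some $[y^i,x^j]$. The crucial identity, obtained by writing $[x^ka,x^r]$ as a telescoping sum of $r$ single commutators $[\,\cdot\,,x]$ that are all congruent modulo $L_3$ (since cyclic rotations of the left entry differ by an element of $L_3$), is
\[
[x^ka,x^r]\equiv \frac{r}{k+r}\,[a,x^{k+r}] \pmod{L_3}.
\]
One then inducts on $\deg b$: if $b$ contains an $x$, cyclically rotate it to the form $x^ka$ with $\deg a<\deg b$ and apply the identity; if $b=y^d$, one is already done. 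The Jacobi identity alone, as in your sketch, does not produce this reduction---the telescoping/cyclic-rotation trick is the missing idea.
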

 
 \begin{proof} 
 Note that in the free algebra generated by $x,a$, we have: 
 
 $\begin{array}{lcl}
 [a,x^m]&=&ax^m-x^ma\\
 &=&ax^m-xax^{m-1}+xax^{m-1}-x^2ax^{m-2}+\dotsc+x^{m-1}ax-x^ma\\
 &=&[ax^{m-1},x]+[xax^{m-2},x]+\dotsc+[x^{m-1}a,x].
 \end{array}$ 
 
 Modulo $L_3$ all these summands are the same, so we have 
 $$
 [a,x^m]=m[x^{m-1}a,x]
 $$
 modulo $L_3$, and hence 
 $$
 [x^{m-1}a,x]=\frac{1}{m}[a,x^m]
 $$
 modulo $L_3$. More generally, 
 
 $\begin{array}{lcl}
 [x^ka,x^r]&=&x^kax^r-x^{k+r}a\\
           &=&x^kax^r-x^{k+1}ax^{r-1}
 +x^{k+1}ax^{r-1}-x^{k+2}ax^{r-2}+\dotsc\\
 &+&x^{k+r-1}ax-x^{k+r}a
 =
 
 [x^{k-1}ax^r,x]+\dotsc+[x^{k+r-1}a,x],
 \end{array}$
 which modulo $L_3$ equals $r[x^{k+r-1}a,x]$ (since all the $r$ summands are the same modulo $L_3$). So we get 
 $$
 [x^ka,x^r]=\frac{r}{k+r}[a,x^{k+r}]
 $$
 modulo $L_3$. 
 
 Now we proceed to prove the proposition. Clearly, $L_2$ and hence $B_2$ is spanned by elements of the form 
 $[a,x]$ and $[a,y]$, $a\in A$. So it suffices to show that for any $a\in A$, 
 $[a,x]$ is a linear combination of $[y^i,x^j]$ modulo $L_3$ (the case of $[a,y]$ is similar). 

To this end, it is enough to show that for any monomial $b\in A$ and $r\ge 1$, 
 $[b,x^r]$ is a linear combination of $[y^i,x^j]$ modulo $L_3$. We will prove this statement by induction in the degree $d$
 of $b$. The base case ($d=1$) is clear. To justify the step of induction, let $b$ have degree $d$. 
 If $b'$ is obtained by cyclic permutation of $b$, then $[b,x]=[b',x]$ modulo $L_3$, so we can replace 
 $b$ with $b'$. Also, if $b=y^d$, we are done. Otherwise, there exists a cyclic permutation $b'$ of $b$
 such that $b'=x^ka$, $k>0$. Then, as shown above, 
 $$
 [b,x^r]=[x^ka,x^r]=\frac{r}{k+r}[a,x^{k+r}]
 $$
 modulo $L_3$. Since ${\rm deg}(a)<d$, we are done by the induction assumption. 
This justifies the induction step and proves the proposition. 
\end{proof}

\subsection{Proof of Theorem \ref{fsthm}(3) for all $n$}
 
\begin{lemma}
\label{lemkerspan}
Let $\zeta:\Omega\otimes \Bbb{C}^n\to \Omega$ defined by $\zeta(\sum\limits_{i}\alpha_i\otimes e_i)=\sum\limits_{i}d\alpha\wedge dx_i$ where $e_i$ is the standard basis of $\Bbb{C}^n$. Then ker$\zeta$ is spanned by the following:
\begin{enumerate}
\item $b\otimes e_i$, where $\ b\in \Omega_{\rm closed};$
\item Elements $\beta\wedge dx_i\otimes e_i$ and $\ \beta\wedge dx_j\otimes e_i+\beta\wedge dx_i\otimes e_j,\ \beta \in \Omega;$
\item Elements $ \sum \limits_{i}\frac{\partial f}{\partial x_i}\otimes e_i,\ f\in \Omega^0.$
\end{enumerate}
\end{lemma}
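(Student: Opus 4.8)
The plan is to describe $\zeta$ explicitly on the $\Bbb C[x_1,\dots,x_n]$-basis of $\Omega\otimes\Bbb C^n$ and then analyze its kernel degree by degree. First I would note that, since $\zeta(\alpha_i\otimes e_i)=d\alpha_i\wedge dx_i$ and $d(d\alpha_i)=0$, the map $\zeta$ factors through $d\otimes\id:\Omega\otimes\Bbb C^n\to\Omega_{\rm exact}\otimes\Bbb C^n$, composed with the ``shuffle'' map $\eta:\Omega\otimes\Bbb C^n\to\Omega$, $\eta(\omega\otimes e_i)=\omega\wedge dx_i$. Thus $\Ker\zeta$ fits in an extension: it contains $\Omega_{\rm closed}\otimes\Bbb C^n$ (which is exactly the span in item (1)), and modulo this, $\Ker\zeta/(\Omega_{\rm closed}\otimes\Bbb C^n)$ is identified with the kernel of the restriction of $\eta$ to $\Omega_{\rm exact}\otimes\Bbb C^n\subset\Omega\otimes\Bbb C^n$. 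So the real content is to compute $\Ker\eta$ and see which elements of it lift to relations already accounted for by items (2) and (3).

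Next I would compute $\Ker\eta$ outright. The map $\eta:\Omega\otimes\Bbb C^n\to\Omega$, $\omega\otimes e_i\mapsto\omega\wedge dx_i$, is $\Bbb C[x_1,\dots,x_n]$-linear and is essentially the Koszul/contraction map; its kernel is spanned by the ``obvious'' syzygies: $\omega\wedge dx_i\otimes e_i$ for any $\omega$ (since $dx_i\wedge dx_i=0$), and $\omega\wedge dx_j\otimes e_i+\omega\wedge dx_i\otimes e_j$ for $i\ne j$ (since $dx_j\wedge dx_i+dx_i\wedge dx_j=0$). This is a standard fact about the exterior algebra: the two-term complex $\Omega^{k}\otimes\Bbb C^n\to\Omega^{k+1}$ has kernel generated in this way — one can prove it by choosing, for each basis monomial of $\Omega^{k+1}$, a preferred preimage and checking that all other preimages differ by the listed relations, an elementary computation on wedge monomials that I would not grind through here. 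These are exactly the elements listed in item (2) (with $\beta\in\Omega$ arbitrary, so in particular $\beta$ of any degree).

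Finally I would reconcile the two pieces. An element $\sum_i\alpha_i\otimes e_i$ lies in $\Ker\zeta$ iff $\sum_i d\alpha_i\otimes e_i\in\Ker\eta$, i.e.\ $\sum_i d\alpha_i\otimes e_i$ is a combination of the item-(2)-type relations with the first tensor factor replaced by its own $d$-image (which lands in exact forms). Pulling back along $d$: the relations $d\alpha_i\wedge dx_i\otimes e_i$ and $d\alpha_i\wedge dx_j\otimes e_i+d\alpha_i\wedge dx_i\otimes e_j$ pull back to item-(2) elements $\beta\wedge dx_i\otimes e_i$ etc.\ with $\beta=d\alpha_i$, plus an ambiguity in the lift of $d\alpha_i$ by a closed form, which is absorbed into item (1); and the kernel of $d:\Omega^0\to\Omega^1$ applied in the $e_i$-slots contributes precisely the relations $\sum_i\frac{\partial f}{\partial x_i}\otimes e_i$ coming from $d f=\sum_i\frac{\partial f}{\partial x_i}dx_i$ — these are the elements of item (3), which are in $\Ker\zeta$ because $\sum_i d\!\left(\frac{\partial f}{\partial x_i}\right)\wedge dx_i=d(df)=0$. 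Assembling: $\Ker\zeta$ is spanned by items (1), (2), (3).

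The main obstacle I anticipate is the bookkeeping in the last step — correctly tracking how a spanning set for $\Ker\eta$ (living in exact forms) lifts through $d$ to a spanning set of $\Ker\zeta$, making sure the ``closed form'' ambiguity produces exactly item (1) and the ``$\Omega^0$ gradient'' relations produce exactly item (3), with no missing or redundant generators. The exterior-algebra computation of $\Ker\eta$ itself is routine; it is the interplay between $d$, closedness, and the three families that needs care, and a clean way to organize it is to argue by induction on the form-degree $k$ of the $\alpha_i$, peeling off item (1) first and reducing to the $\Ker\eta$ computation on the $d$-images.
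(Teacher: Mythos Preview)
Your factorization $\zeta=\eta\circ(d\otimes\id)$ is correct, and the first two pieces of the analysis are fine: $\Ker(d\otimes\id)=\Omega_{\rm closed}\otimes\Bbb C^n$ is exactly the span of item~(1), and $\Ker\eta$ is spanned by the Koszul syzygies of item~(2). The gap is in your ``pulling back along $d$'' step. When you write $(d\otimes\id)(\alpha)=\sum_i d\alpha_i\otimes e_i\in\Ker\eta$, the Koszul decomposition gives $d\alpha_i=\sum_j\gamma_{ij}\wedge dx_j$ with $\gamma_{ij}=\gamma_{ji}\in\Omega^{k-1}$; the type-(2) pieces that appear involve the $\gamma_{ij}$, not ``$d\alpha_i\wedge dx_i\otimes e_i$'' as you wrote (that expression does not even have the right form degree). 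To lift these to type-(2) elements of $\Ker\zeta$ you would need $\beta_{ij}$ with $d\beta_{ij}=\gamma_{ij}$ and $\beta_{ij}=\beta_{ji}$, i.e.\ you would need the $\gamma_{ij}$ to be exact, and you give no argument for this. Your account of where type~(3) enters (``the kernel of $d:\Omega^0\to\Omega^1$ applied in the $e_i$-slots'') is likewise not right --- that kernel is just the constants.

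The paper sidesteps this lifting problem by using the \emph{other} factorization $\zeta=d\circ\eta$. From $\zeta(\alpha)=0$ one gets directly that $w:=\eta(\alpha)=\sum_i\alpha_i\wedge dx_i\in\Omega^{k+1}$ is closed. For $k\ge 1$ this $w$ is exact and a primitive can be written as $\sum_i b_i\wedge dx_i$; subtracting the type-(1) element $\sum_i db_i\otimes e_i$ from $\alpha$ lands one in $\Ker\eta$ itself (not merely in its image under $d\otimes\id$), so the Koszul computation gives type~(2) on the nose. For $k=0$ the closed $1$-form $w=\sum_i\alpha_i\,dx_i$ equals $df$, which is exactly type~(3). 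So the missing idea is to apply the Poincar\'e lemma to $\eta(\alpha)$ rather than trying to lift through $d\otimes\id$.
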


\begin{proof}  It is easy to see that elements of types (1),(2),(3) are contained in the kernel of $\zeta$. Let us now prove that 
any element in the kernel of $\zeta$ is a linear combination of elements of the form (1),(2),(3). 
Let $\alpha=\sum\limits_{i}\alpha_i\otimes e_i\in $ ker $\zeta$. We may assume that $\alpha$ is homogeneous (i.e. $\alpha\in \Omega^k\otimes \Bbb C^n$ for some $k$). We have $\zeta(\alpha)=\sum\limits_{i} d\alpha_i \wedge dx_i=0$, so $w=\sum\limits_{i} \alpha_i\wedge dx_i$ is closed and thus $w$ is exact. 
Let us first assume that $k>0$. Then by the Poincar\'e lemma, we have $w=d(\sum\limits_{i} b_i\wedge dx_i)$ for some $b_i\in \Omega$. Then $\sum\limits_{i}( \alpha_i-db_i)\wedge dx_i=0 $. Therefore, by subtracting from $\alpha$ elements of type (1), we can replace $\alpha_i-db_i$ by $\alpha_i$ and assume without loss of generality that $\sum\limits_{i}\alpha_i\wedge dx_i=0.$ By standard linear algebra (exactness of the Koszul complex), this means that $\alpha_i=\sum\limits_{j}\alpha_{ij}\wedge dx_j, \ \alpha_{ij}=\alpha_{ji}$. So $$\alpha=\sum\limits_{i,j} \alpha_{ij}\wedge dx_j\otimes e_i=\sum\limits_{i}\alpha_{ii}\wedge dx_i\otimes e_i+\sum\limits_{i<j}(\alpha_{ij}\wedge dx_j\otimes e_i+\alpha_{ij}\wedge dx_i\otimes e_j).$$
This is a linear combination of elements of type (2), as desired. 

Now assume that $k=0$. Then $\sum \limits_{i} d\alpha_i\wedge dx_i=0$, so the 1-form $\sum \alpha _idx_i$ is closed. Hence it is exact, which means $\alpha_i=\frac{\partial f}{\partial x_i}$ for some $f\in \Omega^0$, i.e.  $\alpha$ is an element of type (3).
\end{proof} 

\begin{lemma}
\label{lemsym}
In $A_4$, the images of $[x_1[x_2,x_3],x_4]\in B_2$ are antisymmetric in $x_1,x_2,x_3,x_4$.
\end{lemma}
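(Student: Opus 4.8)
The plan is to show that the image $c$ of $[x_1[x_2,x_3],x_4]$ in $B_2=L_2/L_3$ changes sign under each of the three transpositions $(x_2\,x_3)$, $(x_1\,x_2)$ and $(x_1\,x_4)$. Since these generate $S_4$ and the sign character is the unique homomorphism $S_4\to\{\pm1\}$ sending every transposition to $-1$, this yields full antisymmetry of $c$. Antisymmetry under $(x_2\,x_3)$ is immediate from $[x_3,x_2]=-[x_2,x_3]$.

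For $(x_1\,x_2)$ I would start from the Leibniz identity $x_i[x_j,x_k]=[x_ix_j,x_k]-[x_i,x_k]x_j$, which gives
\[
x_1[x_2,x_3]+x_2[x_1,x_3]=[x_1x_2+x_2x_1,x_3]-[x_1,x_3]x_2-[x_2,x_3]x_1 .
\]
Moving the two $L_2$-factors past the single generators in the last two terms (the errors $[[x_1,x_3],x_2]$ and $[[x_2,x_3],x_1]$ lie in $L_3$) yields $2\bigl(x_1[x_2,x_3]+x_2[x_1,x_3]\bigr)\equiv[x_1x_2+x_2x_1,x_3]\pmod{L_3}$. Bracketing with $x_4$ preserves a congruence modulo $L_3$ (the error lands in $[L_3,A]\subseteq L_3$), and $\bigl[[x_1x_2+x_2x_1,x_3],x_4\bigr]\in[L_2,A]\subseteq L_3$, so $[x_1[x_2,x_3],x_4]+[x_2[x_1,x_3],x_4]\in L_3$.

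The main step is $(x_1\,x_4)$, i.e.\ showing $D:=[x_1[x_2,x_3],x_4]+[x_4[x_2,x_3],x_1]\in L_3$. Writing $p:=[x_2,x_3]\in L_2$, a direct expansion gives the (formal, valid for any $a,b,p$) identity
\[
D=[x_1,p]\,x_4+[x_4,p]\,x_1-[x_1x_4+x_4x_1,\,p].
\]
Now $[x_1x_4+x_4x_1,p]\in[A,L_2]\subseteq L_3$, while $[x_1,p]=[x_1,[x_2,x_3]]$ and $[x_4,p]=[x_4,[x_2,x_3]]$ lie in $L_3$, so moving them past the generators $x_4,x_1$ gives $D\equiv x_4[x_1,p]+x_1[x_4,p]\pmod{L_3}$. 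Applying the same doubling trick as in the $(x_1\,x_2)$ case, now with $p$ playing the role of $x_3$, gives $2\bigl(x_4[x_1,p]+x_1[x_4,p]\bigr)\equiv[x_1x_4+x_4x_1,p]\pmod{L_3}$, whose right-hand side lies in $L_3$; hence $x_4[x_1,p]+x_1[x_4,p]\equiv 0$ and $D\in L_3$.

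The delicate point — and the reason this takes some thought rather than being a one-line manipulation — is the $(x_1\,x_4)$ case. The naive rearrangements of $D$ only exhibit it as an element of the ideal $M_3$, which is useless here, since the inclusion $M_3\cap L_2\subseteq L_3$ is exactly the content of Theorem~\ref{fsthm}(3) that we are ultimately after. What makes the argument go through is organizing $D$ so that the terms obstructing membership in $L_3$ occur in pairs killed by the ``$2\times$'' identity; this uses only $\tfrac12\in R$ together with the exact inclusion $[L_1,L_2]\subseteq L_3$. Once the displayed identity for $D$ is found, the rest is routine.
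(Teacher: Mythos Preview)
Your proof is correct. It differs from the paper's in that both the $(x_1\,x_2)$ and $(x_1\,x_4)$ cases are handled via a ``doubling'' identity that requires $\tfrac12\in R$, whereas the paper gives direct rearrangements that avoid any division. For $(x_1\,x_2)$ the paper simply writes
\[
[x_1[x_2,x_3],x_4]=[[x_1x_2,x_3],x_4]-[x_2[x_1,x_3],x_4]+[[x_2,[x_1,x_3]],x_4]
\]
and discards the first and last terms as elements of $L_3$. For $(x_1\,x_4)$ it expands $[x_1p,x_4]=[x_1,x_4]p+[p,x_1x_4]-[p,x_1]x_4$ with $p=[x_2,x_3]$, drops the middle term (which lies in $L_3$), commutes $[p,x_1]$ past $x_4$ (the error lies in $L_4$), and then observes that $[x_1,x_4]p-x_4[p,x_1]=-[x_4p,x_1]$ is an \emph{exact} identity. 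So your claim that the $(x_1\,x_4)$ case resists ``one-line manipulation'' is a bit overstated: the paper's computation is four short lines and needs no division by~$2$. That said, your argument is self-contained and valid over $\mathbb{C}$, and your warning about the danger of landing only in $M_3$ rather than $L_3$ is exactly the right thing to flag.
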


\begin{proof}
The antisymmetry in $x_2,x_3$ is clear. We show the antisymmetry in $x_1,x_2$. 

$\begin{array}{lcl}
[x_1[x_2,x_3],x_4]&=& [[x_1x_2,x_3],x_4]-[[x_1,x_3]x_2,x_4]\\
&=&[[x_1x_2,x_3],x_4]-[x_2[x_1,x_3],x_4]+[[x_2,[x_1,x_3]],x_4]\\
&=& -[x_2[x_1,x_3],x_4]\ \text{ mod }L_3.
\end{array}$

Now we show the antisymmetry in $x_1,x_4$.

$\begin{array}{lcl}
[x_1[x_2,x_3],x_4]&=&[x_1,x_4][x_2,x_3]+x_1[[x_2,x_3],x_4]\\
&=&[x_1,x_4][x_2,x_3]+[[x_2,x_3],x_1x_4]-[[x_2,x_3],x_1]x_4\\
&=&[x_1,x_4][x_2,x_3]-x_4[[x_2,x_3],x_1]\\
&=&-[x_4[x_2,x_3],x_1]\ \text{ mod }L_3.
\end{array}$
\end{proof}

Now we prove Theorem \ref{fsthm}(3).  Recall that $L_2$ is spanned by elements of the form $[a,x_i],\ a\in A$. Also, we know from Lemma \ref{lemmaBapat1} that $[M_3,A]\subset L_3$. Thus we have a surjective map $\eta :A/M_3\otimes \Bbb{C}^n \to B_2$ defined by $\eta(a\otimes e_i)=[a,x_i].$ This means that we have a surjective map $\eta:\Omega^{\rm even} \otimes \Bbb{C}^n\to B_2$. Then we have the following diagram

\begin{displaymath}
    \xymatrix{
        \Omega^{\rm even}\otimes \Bbb{C}^n \ar[r]^\zeta \ar[d]_\eta & \Omega_{\rm exact}^{\rm even} \\
        B_2 \ar[ur]_{\theta}        }
\end{displaymath}
which is commutative. So to prove the result, it suffices to show that ker$\zeta=\ \text{ker}\eta$. Thus we have to show that any element of ker$\zeta$ is contained in ker$\eta$. By Lemma \ref{lemkerspan}, it suffices to check that elements of type $(1)$ and $(2)$ are killed by $\eta$.

Type $(1)$ is clear, since if $a\in A/M_3$ maps to $\Omega_{\rm exact}^{\rm even}$, then $a\in L_2$ as we showed in Corollary \ref{cor3.4} (i).

Now we show this for type (2). First we show that 
$$
\eta(\beta dx_i\otimes e_i)=0\ \forall \beta.
$$ 
To this end, take $a=b[x_k,x_i].$ We have $\phi(b[x_k,x_i])=\phi(b)\wedge dx_k\wedge dx_i$. Thus we have $\eta(\phi(b)\wedge dx_k\wedge dx_i\otimes e_i)=[b[x_k,x_i],x_i]=0\ \text{mod }L_3$ since it is antisymmetric in the last two variables by Lemma \ref{lemsym}. But each $\beta$ is a linear combination of $\phi(b)\wedge dx_k$. 

Now deal with $\eta(\beta dx_i\otimes e_j+ \beta dx_j\otimes e_i)$ similarly. Take $a=b[x_k,x_j].$ We have $\phi(b[x_k,x_j])=\phi(b)\wedge dx_k\wedge dx_j$, so 
$$
\eta(\phi(b)\wedge dx_k\wedge dx_i\otimes e_j+\phi(b)\wedge dx_k\wedge dx_j\otimes e_i)=
$$
$$
[b[x_k,x_i],x_j]+[b[x_k,x_j],x_i]=0\ \text{mod}\  L_3,
$$ 
again by Lemma \ref{lemsym}. 

It remains to consider the case of elements of type $(3)$. It is enough to take $f=(\sum_j \lambda_jx_j)^m$, 
$\lambda_i\in \Bbb C$, $m\in \Bbb Z_{\ge 1}$, since such elements along with constants span $\Omega^0$. 
Then $\frac{\partial f}{\partial x_i}=m\lambda_i (\sum_j \lambda_jx_j)^{m-1}$. So the corresponding element of type (3) is 
$$
\alpha=m(\sum_j \lambda_jx_j)^{m-1}\otimes \sum_i\lambda_i e_i
$$
Thus $\eta(\alpha)=m[(\sum_j \lambda_jx_j)^{m-1},\sum_i \lambda_ix_i]=0$, as needed. 

\section{Proof of Theorem \ref{thmBapat1}} 

\begin{lemma}\label{le1} (\cite{AJ}, Lemma 3.1) One has 
$$
[u^3,[v,w]] =3[u^2,[uv,w]]-3[u,[u^2v,w]] +\frac{3}{2}[u^2,[v,[u,w]]]-\frac{3}{2}[u,[v,[u^2,w]]] + 
$$
$$
+[u,[u,[u,[v,w]]]]-\frac{3}{2}[u,[u,[v,[u,w]]]] +\frac{3}{2}[u,[v,[u,[u,w]]]].
$$
\end{lemma}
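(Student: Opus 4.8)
The plan is to verify this identity by a direct computation in the free Lie algebra on the three generators $u,v,w$, or equivalently in the universal enveloping algebra (the free associative algebra) $\Bbb C\langle u,v,w\rangle$. Both sides are elements of the free Lie algebra that are homogeneous of degree $3$ in $u$ and of degree $1$ in each of $v$ and $w$; the space of such multihomogeneous Lie elements is finite-dimensional, so the identity is a purely mechanical check once one picks a convenient spanning set. Concretely, I would first expand every iterated bracket on both sides into a $\Bbb C$-linear combination of associative monomials in $u,v,w$ (words of length $5$ with three $u$'s, one $v$, one $w$), collect coefficients, and confirm that the two sides agree monomial by monomial. This requires no cleverness, only bookkeeping, and is the kind of verification that is carried out by hand or by computer.

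A slightly more structured route, which reduces the amount of arithmetic, is to work modulo lower central series terms in stages. Observe that the terms on the right split naturally into the ``leading'' double-bracket terms $3[u^2,[uv,w]]-3[u,[u^2v,w]]$ together with correction terms involving deeper nesting in $u$. One can use the Jacobi identity repeatedly to move all the $u$'s as far outside as possible: each application of Jacobi of the form $[u,[p,q]] = [[u,p],q]+[p,[u,q]]$ trades a bracket with $u$ on the outside for brackets with $u$ pushed one level inward, and iterating this on $[u^3,[v,w]] = [u,[u,[u,[v,w]]]]$ generates exactly the pattern of terms appearing on the right-hand side. The coefficients $3$, $-3$, $\tfrac32$, $-\tfrac32$ are precisely the binomial-type constants that arise from the number of ways of distributing the three copies of $u$, so the identity should fall out of carefully tracking these combinatorial factors.

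I do not expect any genuine obstacle here: the statement is an explicit polynomial identity in a finite-dimensional space, so it is ``true or false by inspection.'' The only mild annoyance is the presence of the half-integer coefficients, which means one must be careful not to drop factors of $\tfrac12$ when symmetrizing; this is also the reason the hypothesis $\tfrac16\in R$ (or at least $\tfrac12\in R$) is implicitly needed for the identity to make sense as stated. The cleanest writeup is simply to say that the identity is verified by a direct expansion of both sides into associative monomials, perhaps remarking that it also follows from iterated use of the Jacobi identity as sketched above.
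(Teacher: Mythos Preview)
Your primary plan---expand both sides as linear combinations of the $\binom{5}{3,1,1}=20$ associative monomials in $u,v,w$ and compare coefficients---is exactly what the paper does (it just says ``Direct computation (by hand or using a computer)''), and it is correct.

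However, two remarks about the surrounding discussion. First, the identity is \emph{not} an identity in the free Lie algebra: the elements $u^3$, $u^2$, $uv$, $u^2v$ are associative products, not Lie words, so both sides live genuinely in $\Bbb C\langle u,v,w\rangle$ and not a priori in its Lie subalgebra. Second, and more importantly, your ``more structured route'' rests on the equation $[u^3,[v,w]]=[u,[u,[u,[v,w]]]]$, which is false: $[u^3,-]$ is bracketing with the associative cube, not $(\mathrm{ad}\,u)^3$. Indeed the term $[u,[u,[u,[v,w]]]]$ appears separately on the right-hand side of the lemma, so if your equality held the remaining six terms would have to sum to zero, and they do not. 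The Jacobi identity alone cannot produce terms like $[u^2,[uv,w]]$ from a Lie word; the associative structure is essential. So the alternative derivation you sketch does not work, and the honest proof is the brute-force expansion you describe first.
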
 

\begin{proof} Direct computation (by hand or using a computer). 
\end{proof} 

\begin{corollary}\label{cor1} (\cite{AJ}, Corollary 3.2) Let $a,b,c\in A$, and 
$$
S(a,b,c)=\frac{1}{6}(abc+acb+bac+bca+cab+cba)
$$
be the average of 
products of $a,b,c$ in all orders. Then for $m\ge 2$
$$
[S(a,b,c),B_m]\subset [ab,B_m]+[bc,B_m]+[ca,B_m]+[a,B_m]+[b,B_m]+[c,B_m]
$$
inside $B_{m+1}$. 
\end{corollary}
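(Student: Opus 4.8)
\textbf{Proof proposal for Corollary \ref{cor1}.}

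The plan is to deduce the statement from Lemma \ref{le1} by a polarization (multilinearization) argument, exactly in the spirit of how one passes from power-sum identities to elementary-symmetric ones. First I would observe that $S(a,b,c)$ is, up to a scalar, the full polarization of the cube $u\mapsto u^3$: writing $u=a+b+c$ and expanding $(a+b+c)^3$ inside the noncommutative algebra $A$, the coefficient of the polylinear term (degree one in each of $a,b,c$) is precisely $abc+acb+bac+bca+cab+cba = 6S(a,b,c)$, while the remaining terms are cubes or "squared" monomials like $a^2b$, $ab^2$, etc. Concretely, I would substitute $u=a+b+c$ into Lemma \ref{le1}, fix an arbitrary $z\in B_m$ in the slot of $[v,w]$ (more precisely I will take $[v,w]$ to range over a spanning set of $B_m$, using that $B_m$ for $m\ge 2$ lies in $L_2$ so its elements are sums of brackets), and then extract the polylinear-in-$a,b,c$ component of the resulting identity in $B_{m+1}$.

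The key point making this work is that every term appearing on the right-hand side of Lemma \ref{le1}, after the substitution $u=a+b+c$ and extraction of the $(1,1,1)$-component, is visibly of one of the allowed forms. The terms $3[u^2,[uv,w]]$ and $3[u,[u^2v,w]]$, upon polarization, produce brackets of the shape $[m_2,[\cdot,w]]$ and $[m_1,[\cdot,w]]$ where $m_2$ is a degree-two monomial in $\{a,b,c\}$ and $m_1$ is a single letter; since $[pq,B_m]\subseteq[p,[q,B_m]]+[q,[p,B_m]]+\ldots$ can be organized so that the inner bracket $[\cdot v\cdot,w]$ is absorbed into $B_m$ (using that $[A,L_m]\subseteq L_{m+1}$ and $[v,w]$-type elements span $B_m$), these contribute to $[ab,B_m]+\ldots+[a,B_m]+\ldots$. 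The remaining five terms on the right are all of the form $[\text{monomial in }u\text{ of degree }1\text{ or }2,\ [\ldots,[\ldots,[v,w]]\ldots]]$ where the entire inner nested bracket, upon substitution, again lands in $B_m$ because it is an iterated commutator with $m$-fold bracketing structure whose innermost piece is $[v,w]$; so these too are of the form $[ab,B_m]$, $[a,B_m]$, etc. Thus the polylinear component of the RHS lies in the claimed subspace, and since the polylinear component of the LHS $[u^3,[v,w]]$ is exactly $6[S(a,b,c),[v,w]]$, dividing by $6$ (legitimate since we work over a field of characteristic zero, per the hypotheses of Theorem \ref{thmBapat1}) gives the result.

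The main obstacle I anticipate is bookkeeping: one must carefully verify that each monomial produced by expanding the right-hand side of Lemma \ref{le1} under $u=a+b+c$, after extracting the $(1,1,1)$-graded piece, can genuinely be rewritten with the "heavy" factor ($ab$, $bc$, $ca$, or a single letter) in the outermost slot and everything else folded into $B_m$. This requires the identity $[pq,x]=[p,qx]+[q,px]-[p,x]q-q[p,x]$ type manipulations (really the derivation property of $[\,\cdot\,,x]$ modulo lower terms) together with the fact that $[A,B_m]\subseteq B_{m+1}$ for $m\ge 2$ and that $B_m$ is spanned by brackets $[v,w]$ with the appropriate nesting — plus attention to whether "inner" letters of the commutators are single generators or products, which is where Lemma \ref{multilin1} and the hypothesis $m\ge 2$ (ensuring enough bracket depth) get used. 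None of this is deep, but it is the step where a careless expansion would either miss a term or produce something not manifestly in the target subspace; I would handle it by treating $a,b,c$ as three new free generators adjoined to $A$, doing the extraction once and for all in that universal setting, and only then specializing.
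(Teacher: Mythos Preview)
Your approach is exactly the paper's: substitute $u=t_1a+t_2b+t_3c$ into Lemma~\ref{le1} and take the coefficient of $t_1t_2t_3$. But you over-complicate the execution and slightly misidentify where the work lies. You cannot ``fix $z\in B_m$ in the slot of $[v,w]$'', because the right side of Lemma~\ref{le1} does not contain $[v,w]$ as a subexpression --- it has $[uv,w]$, $[u^2v,w]$, $[v,[u,w]]$, etc., so $v$ and $w$ must be kept separate. The paper simply takes $v\in A$ and $w\in L_{m-1}$ arbitrary (so that elements $[v,w]$ span $L_m$). With $w\in L_{m-1}$, every inner bracket on the RHS --- $[uv,w]$, $[u^2v,w]$, $[v,[u,w]]$, $[v,[u^2,w]]$, and the deeper nested ones --- lies in $L_m$ \emph{immediately}, since each has $w$ in its innermost slot with at least one further commutator applied. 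Hence after polarization every RHS term is already $[p,z']$ or $[pq,z']$ with $p,q\in\{a,b,c\}$ and $z'\in L_m$; and $[qp,z']\equiv[pq,z']$ in $B_{m+1}$ because $[[q,p],z']\in L_{m+2}$. That is the entire argument. Your anticipated bookkeeping --- the $[pq,x]$-type identity (which, as written, is not correct anyway), Lemma~\ref{multilin1}, rearranging ``heavy factors'' to the outer slot --- is all unnecessary.
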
  

\begin{proof} In Lemma \ref{le1}, set $u=t_1a+t_2b+t_3c$, $v$ to be any element of $A$, and $w$ to be any element of $L_{m-1}$, and take the coefficient of $t_1t_2t_3$. 
\end{proof} 

Now we are ready to prove the theorem. We may assume that $A=A_n$, generated by $x_1,...,x_n$. 
Let $P$ be a monomial in $x_i$. We have to show that $[P,B_m]$ is contained in 
the sum of $[x_i,B_m]$, $[x_ix_j,B_m]$, and $[x_i[x_j,x_k],B_m]$ (the latter with $i,j,k$ distinct). Since by Proposition \ref{propfeigin}, 
$[M_3,L_j]\subset L_{j+2}$, we may view $P$ as an element of $A/M_3$. By Theorem \ref{fsthm}, 
$A/M_3\cong \Omega^{\rm even}_*$, so we may view $P$ as an element of $\Omega^{\rm even}$. 

Let $E$ be the linear span of elements of the form $S(a,b,c)$, where $a,b,c\in \Omega^{\rm even}_*$ 
are of positive degree. Let $X$ be the span of $1,x_i,x_ix_j$ and $x_idx_j\wedge dx_k$ with $i,j,k$ distinct. 

\begin{lemma}\label{le2} (\cite{AJ}) One has $\Omega^{\rm even}_*=E+X+\Omega^{\rm even}_{\rm exact}$.
\end{lemma}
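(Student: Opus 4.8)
The plan is to decompose an arbitrary element of $\Omega^{\rm even}_*$ as a $\CC$-linear combination modulo $E + \Omega^{\rm even}_{\rm exact}$ of the low-complexity generators listed in $X$, by induction on the degree of the differential form and, within a fixed degree, on the degree of the polynomial coefficients. Since $\Omega^{\rm even}_*$ is spanned by monomials $x_1^{m_1}\cdots x_n^{m_n}\, dx_{i_1}\wedge\cdots\wedge dx_{i_{2r}}$ with $i_1 < \cdots < i_{2r}$, and since in the $*$-algebra every even form of positive degree is a product $x_{i_1}\cdots x_{i_k}\wedge(\cdots)$ expressible via $*$-products of linear forms $x_i$ and quadratic pieces $dx_j\wedge dx_k = [x_j,x_k]_*$, I would first reduce to showing that each such monomial lies in $E + X + \Omega^{\rm even}_{\rm exact}$.

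First I would treat the $0$-form part $\Omega^0 = \CC[x_1,\ldots,x_n]$: here $X$ already contains $1$ and all $x_i,x_ix_j$, so a monomial $x_ix_jx_k\cdots$ of degree $\ge 3$ must be pushed into $E$. The key identity is that for a monomial $x_ax_bx_c$ of degree exactly three, the symmetrization $S(x_a,x_b,x_c)$ lies in $E$ by definition, and $x_ax_bx_c$ differs from $6\,S(x_a,x_b,x_c)$ by commutators $[x_\bullet,x_\bullet]$ times a monomial — but in $\Omega^0 \subset \Omega^{\rm even}_*$ those correction terms are $dx_\bullet\wedge dx_\bullet$ times lower-degree polynomial, i.e. genuinely in the degree-$2$ part of $\Omega$, hence handled at an earlier stage of the induction on form-degree (note $\Omega^2_{\rm exact} = \Omega^2$ when we are in $\Omega^0$-coefficients, but more generally the correction lands in a form-degree we have already processed). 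Iterating, any polynomial monomial of degree $\ge 3$ reduces modulo $E$ plus positive-form-degree terms to something of polynomial degree $\le 2$, which lies in $X$.

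Next I would handle forms of positive degree $2r$. Write such a monomial as $\omega = g\cdot dx_{i_1}\wedge\cdots\wedge dx_{i_{2r}}$ with $g$ a monomial in the $x_i$. Using $dx_i\wedge dx_j = d(x_i\,dx_j)$ and the Leibniz rule for $d$, I would first dispose of any $\omega$ that is exact — in particular, if any two of the $dx$ indices can be "integrated," i.e. if $\omega = d\alpha$, it drops out modulo $\Omega^{\rm even}_{\rm exact}$. The remaining task is to reduce the polynomial coefficient $g$ and the number of distinct $dx$-indices. For the polynomial coefficient: given $\omega = x_a\,\omega'$ with $\omega'$ of the same form-degree and smaller polynomial degree, I would use the $*$-product structure — $x_a \ast \omega'$ equals $x_a\wedge\omega' + \tfrac12 dx_a\wedge d\omega'$, and $d\omega'$ is exact — together with Corollary \ref{cor1} applied with $a = x_a$, $b,c$ chosen from the $*$-factorization of $\omega'$, to trade the product for a symmetrized product (in $E$) plus lower-complexity terms. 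For the index count: when $2r \ge 3$ and the coefficient involves a variable $x_m$ distinct from $i_1,\ldots,i_{2r}$, or when there are repeated indices, the earlier reductions in Section 8 (Lemma \ref{lemsym}, the antisymmetry of $[x_1[x_2,x_3],x_4]$) or the Koszul-type relations from Lemma \ref{lemkerspan} let me rewrite $\omega$ in terms of $x_idx_j\wedge dx_k$ (the three-distinct-index generator in $X$) modulo $\Omega^{\rm even}_{\rm exact}$ and lower-degree pieces.

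The main obstacle I anticipate is bookkeeping the simultaneous induction cleanly: the correction terms produced when replacing a product $abc$ by $S(a,b,c)$ are commutators, which in $\Omega^{\rm even}_*$ raise the differential-form degree by two while lowering polynomial degree, so I must be careful that the induction is genuinely well-founded — ordering first by form-degree (increasing, since commutators push us up, so actually I should induct with form-degree going the "wrong" way and terminate because form-degree is bounded by $n$) and then by polynomial degree. Making the termination argument airtight — showing that every reduction step strictly decreases a well-chosen complexity measure (e.g. the pair (number of distinct $dx$-indices beyond the allowed pattern, polynomial degree) in lexicographic order, with form-degree bounded above by $n$) — is where the real work lies; the algebraic identities themselves are routine given Lemma \ref{le1} and Corollary \ref{cor1}.
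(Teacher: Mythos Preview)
Your underlying intuition --- that the correction terms produced when replacing a $*$-product $abc$ by $S(a,b,c)$ raise the form degree by two, so one can induct on form degree which is bounded by $n$ --- is exactly the right idea, and would eventually yield a proof. But the paper packages this observation far more efficiently: rather than running the induction by hand, it simply passes to the associated graded of $\Omega^{\rm even}_*$ under the filtration by form degree. The associated graded is $\Omega^{\rm even}$ with the ordinary (commutative) wedge product, and there $S(a,b,c)=abc$. Since $\Omega^{\rm even}$ is generated as a commutative algebra by $x_i$ and $dx_j\wedge dx_k$, the quotient by the image of $E$ (i.e.\ by all triple products of positive-degree generators) is spanned by products of at most two generators: $1$, $x_i$, $x_ix_j$, $x_i\,dx_j\wedge dx_k$, $dx_j\wedge dx_k$, $dx_i\wedge dx_j\wedge dx_k\wedge dx_l$. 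The last two are exact, as is $x_i\,dx_i\wedge dx_k$, and what remains is exactly $X$. This is a two-line argument; your version unwinds the same filtration step by step.

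A few of your citations are misplaced and would not do the work you assign them. Corollary~\ref{cor1} concerns $[S(a,b,c),B_m]$ inside $B_{m+1}$, not the subspace $E\subset\Omega^{\rm even}_*$; it is the consequence of Lemma~\ref{le2} in the proof of Theorem~\ref{thmBapat1}, not an input to Lemma~\ref{le2}. Likewise Lemmas~\ref{lemsym} and~\ref{lemkerspan} belong to the proof of Theorem~\ref{fsthm}(3) and deal with $B_2$ and the map $\zeta$, not with the decomposition of $\Omega^{\rm even}_*$. The only facts actually needed are that $\Omega^{\rm even}$ is commutative under $\wedge$, that it is generated by $x_i$ and $dx_j\wedge dx_k$, and that $dx_j\wedge dx_k$, $dx_i\wedge dx_j\wedge dx_k\wedge dx_l$, and $x_i\,dx_i\wedge dx_k$ are exact.
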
 

\begin{proof} Note that it suffices to check this for the associated graded algebra of $\Omega^{\rm even}_*$ under the filtration by rank of forms, i.e. for $\Omega^{\rm even}$ under ordinary multiplications. Note that $\Omega^{\rm even}$ is a commutative algebra, so in this algebra $S(a,b,c)=abc$. Also, $\Omega^{\rm even}$ is generated by $x_i$ and $dx_j\wedge dx_k$. 
Thus, $\Omega^{\rm even}/E$ is spanned by $1,x_i,x_ix_j,x_idx_j\wedge dx_k,dx_j\wedge dx_k$, and $dx_i\wedge dx_j\wedge dx_k\wedge dx_l$. 
But the last two forms are exact, as is $x_idx_i\wedge dx_k$. This implies the lemma. 
\end{proof} 
 
Now we prove by induction in degree of $P\in \Omega^{\rm even}_*$ that 
$$
[P,B_m]\subset \sum_i [x_i,B_m]+\sum_{i,j}[x_ix_j,B_m]+\sum_{i,j,l}[x_i[x_j,x_k],B_m].
$$ 
The base of induction is obvious. The induction step follows from Lemma \ref{le2}, Theorem \ref{fsthm}(4) and Corollary \ref{cor1}. This implies the theorem. 

\section{Application of representation theory of $W_n$ to the study of $B_i$ and $N_i$ and proof of Theorem \ref{hilserthm}}

Let us explain how the representation theory of $W_n$ can be used to study $B_i$ and $ N_i$.

\begin{proposition}
If $i\geq 3$ then $\Omega _{\rm closed}^ N(0\leq N\leq n)$ does not occur as a composition factor in $B_i$ nor $N_i$.
\end{proposition}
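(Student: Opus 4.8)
The plan is to exploit the fact that the modules $\Omega^N_{\rm closed}$ have a distinctive feature among the simple objects of $\mathcal{C}$: they are precisely the $\mathcal{F}_\lambda$ with $\lambda = 1^N$, $N < n$ (together with $\Omega^n = \mathcal{F}_{1^n}$, but this one \emph{is} irreducible and of the usual form $\mathcal{F}_\lambda$). More concretely, $h_{\Omega^N_{\rm closed}}(t)$ is the ``closed forms'' piece, whose leading-order growth of graded dimensions is $\sim t^N/(1-t)^n$ but with a \emph{smaller} numerator than the full $\Omega^N = \mathcal{F}_{1^N}$, which has $h_{\mathcal{F}_{1^N}}(t) = \binom{n}{N} t^N/(1-t)^n$. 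The key structural input is Theorem \ref{thmBapat1} (for $B_i$) together with the already-established description of $B_2 \cong \Omega^{\rm even}_{\rm exact}$ and $N_2 \cong \Omega^{\rm even,+}$ from Theorem \ref{fsthm}. So the first step is to settle the base case $i = 2$ by hand: examine the $W_n$-module structure of $\Omega^{\rm even}_{\rm exact}$ and $\Omega^{\rm even,+}$ directly and observe, using the exact sequence $0 \to \Omega^N_{\rm closed} \to \Omega^N \to \Omega^{N+1}_{\rm exact} \to 0$, that the composition factors of $\Omega^{\rm even}_{\rm exact} = \bigoplus_{i \ge 1}\Omega^{2i}_{\rm exact}$ are exactly the $\Omega^{2i}_{\rm exact} = \Omega^{2i}_{\rm closed}$ for $i \ge 1$ \emph{except} that $\Omega^{2i}_{\rm closed}$ for $2i < n$ is irreducible and IS a closed-forms module --- so actually the base case $i = 2$ is genuinely different and the statement is about $i \ge 3$.

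Granting the base case, the inductive step should run as follows. Suppose $i \ge 3$ and that $\Omega^N_{\rm closed}$ does not occur in $B_{i-1}$ (with the understanding that for $i - 1 = 2$ we use the explicit computation above, which shows that although $\Omega^{2k}_{\rm closed}$ \emph{does} occur in $B_2$, it occurs ``at the bottom'' in a way that gets killed by the commutator operation). By Theorem \ref{thmBapat1}, $B_i = [A_{\le 2}, B_{i-1}] + \sum [x_i[x_j,x_k], B_{i-1}]$, so every composition factor of $B_i$ is a subquotient of $[P, B_{i-1}]$ for $P$ ranging over $1, x_i, x_ix_j, x_i[x_j,x_k]$. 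The operation $b \mapsto [P, b]$ on $B_{i-1}$, transported via $\phi$ to $\Omega$, is (up to lower-order terms in the rank filtration) the action of a polynomial vector field $v_P \in W_n$ when $P$ is linear, and a product of two such actions for quadratic $P$; more precisely $[\alpha, \beta]_\ast = d\alpha \wedge d\beta$, so $[P, b]$ is a wedge with an exact form. The crucial observation is then: \emph{wedging with an exact form, or applying a vector field whose divergence-type terms are controlled, cannot create a closed-forms composition factor from a module that has none}, because $\Omega^N_{\rm closed}$ is characterized among simples by a cohomological condition (being in the image of $d$ and killed by $d$) that is preserved under these operations. The argument should be phrased using the $W_n$-equivariant structure: the $d$ operator gives a morphism of $W_n$-modules, and the closed-forms simples are exactly the kernels of the induced maps, so a $W_n$-submodule generated by images of $[P, -]$ cannot hit them unless they were already present.

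The main obstacle, I expect, is making precise the claim that the ``lower-order terms'' in the rank filtration do not interfere --- that is, controlling the composition factors of $[P, B_{i-1}]$ when $P$ has positive rank (like $x_i dx_j \wedge dx_k$), where the $\ast$-product mixes ranks. One clean way around this: filter $B_i$ by rank of forms, pass to the associated graded (where $\ast$ becomes $\wedge$, commutative), and argue there, since composition factors are insensitive to filtration. On $\gr B_i$ the operation $[P, -]$ becomes $b \mapsto dP \wedge db$, literally wedging with an exact form, and then the statement ``$\Omega^N_{\rm closed}$ never occurs'' reduces to the purely homological fact that $d\Omega \wedge d\Omega$ lands in exact forms and that, by the Rudakov classification (Theorem \ref{Rud}) together with the exact sequences in the Example after it, the exact-forms modules $\Omega^{N+1}_{\rm exact}$ for $N + 1 < n$ have \emph{no} $\Omega^M_{\rm closed}$ composition factor with $M < n$ --- wait, $\Omega^{N+1}_{\rm exact} = \Omega^{N+1}_{\rm closed}$, so I would instead need that the modules arising as $dP \wedge (\text{stuff})$ for $P$ of \emph{positive} degree are built from $\mathcal{F}_\lambda$ with $\lambda_1 \ge 2$. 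This is where I would invoke that $B_2$ itself, as shown via Theorem \ref{fsthm}(4), already has the property that after one more bracket the closed-form pieces disappear; the honest inductive statement to carry is therefore about $B_i$ for $i \ge 3$ with the $i = 2$ anchor handled by the explicit $\Omega^{\rm even}_{\rm exact}$ description, and the hard verification is precisely that passing from $i = 2$ to $i = 3$ kills the $\Omega^{2k}_{\rm closed}$ factors --- a finite check combining Lemma-type identities with Rudakov's list.
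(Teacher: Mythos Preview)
Your approach has a genuine gap. The inductive step hinges on the claim that if $B_{m-1}$ has no $\Omega^N_{\rm closed}$ composition factor, then neither does $B_m = \sum_P [P, B_{m-1}]$ (with $P$ ranging over $x_i, x_ix_j, x_i[x_j,x_k]$). But the maps $[P,-]:B_{m-1}\to B_m$ are \emph{not} $W_n$-equivariant for fixed $P$; they are just linear maps. Composition factors are invariants of the $W_n$-module structure, and a vector-space span such as $\sum_P [P,B_{m-1}]$ has no reason to inherit constraints on composition factors from its source. Passing to an associated graded does not help: for $m\ge 3$ there is no rank-of-forms filtration on $B_m$ to speak of (that structure lives on $A/M_3$, not on $B_m$), and even on $\Omega$ the operation $\alpha\mapsto dP\wedge\alpha$ is not a morphism of $W_n$-modules. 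You also correctly notice that the base case collapses: $B_2\cong\Omega^{\rm even}_{\rm exact}$ genuinely \emph{does} have $\Omega^{2k}_{\rm closed}$ as composition factors, so the induction cannot start there, and the proposed ``finite check'' for $i=2\to i=3$ is left as exactly the substance of the whole proposition.

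The paper's argument is completely different and much shorter. Observe that $\Omega^N_{\rm closed}$ contains the vector $dx_1\wedge\cdots\wedge dx_N$ in multidegree $(1,\ldots,1,0,\ldots,0)$. If $\Omega^N_{\rm closed}$ occurred in $B_i$ for some $i\ge 3$, then $B_i$ would be nonzero in that multidegree; setting $x_{N+1}=\cdots=x_n=0$ reduces to the free algebra $A_N$, where this is the polylinear degree. The polylinear part of $A_N$ is the regular representation of $S_N$, so the sign representation occurs in it exactly once; one checks directly that this single copy already sits in $\overline{B}_1\oplus B_2$ (resp.\ $N_1\oplus N_2$), leaving nothing for $i\ge 3$. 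No induction, no $W_n$-equivariance of auxiliary maps, just a multiplicity count.
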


\begin{proof}
First, consider $N=n$, and look at the polylinear part in $x_i$. In $A_n$ this part is the regular representation of $S_n$, so contains a single copy of the sign representation of $S_n$. But this copy already occurs in $\overline{B}_1\oplus B_2$ and $N_1\oplus N_2$, so does not occur in $\underset{i\geq 3}{\oplus} B_i$ nor $\underset{i\geq 3}{\oplus} N_i$. Hence, $\Omega_{\rm closed}^N$ cannot occur in these, as it contains such a copy.

Now, for $N<n$, $\Omega_{\rm closed}^N$ still cannot occur, as it contains a vector $dx_1\wedge\cdots \wedge dx_N$ in degree $\underbrace{(1,\cdots,1}_{N},0\cdots,0)$, which leads to a contradiction with the above if we mod out the other variables ${x_{N+1},\cdots,x_n}$. 
\end{proof} 

Thus, $B_i,\ N_i\ (i\geq3)$ are equal in the Grothendieck group of $\mathcal{C}$ to $\oplus_\lambda \mathcal{F}_\lambda$. 
So, for the multivariable Hilbert series (in which the power of the variable $t_i$ counts the degree with respect to $x_i$) we have $$h_{B_i}(t_1,\cdots,t_n)=\sum\limits_{\lambda}m_\lambda h_{\mathcal{F}_\lambda}(t_1,\cdots,t_n)=\frac{\sum\limits_{\lambda}m_\lambda h_{S_\lambda(V)}(t_1,\cdots,t_n)}{(1-t_1)\cdots (1-t_n)}.$$
An analogous formula holds for $N_i$. Moreover, by Theorem \ref{finlength}, 
$B_i$ and $N_i$ have finite length, i.e. the sum over $\lambda$ is finite, i.e.  the numerator is a polynomial. 
This proves Theorem \ref{hilserthm} (by setting $t_i=t$ for all $i$). 

For $n=2$ we know from \cite{AJ} that $|\lambda|\leq 2m-3$ for $B_m$ and from \cite{Ke} that $|\lambda|\leq 2m-2$ for $N_m$. Therefore,
$$
h_{B_i}=\frac{\sum\limits_{\lambda}m_\lambda(t_1^{\lambda_1}
t_2^{\lambda_2}+t_1^{\lambda_1-1}
t_2^{\lambda_2+1}+\cdots+t_1^{\lambda_2}
t_2^{\lambda_1})}{(1-t_1)(1-t_2)}
,$$
with $|\lambda|\leq 2m-3$, and $h_{N_i}$ is given by a similar formula with $|\lambda|\leq 2m-2$.

This means that it suffices to know $h_{B_i}$ for degrees $\leq 2m-3$ and $h_{N_i}$ for degrees  $\leq 2m-2$. This allows one to compute $h_{B_i} (i\leq7)$ by using computer, 
which is done in \cite{AJ}.

\section{Lower central series of algebras with relations} 

Consider now the lower central series of algebras with relations. In this case, much less is known than for free algebras, and we will discuss some theoretical and experimental results and conjectures. Specifically, consider the algebra $A:=\Bbb C<x,y>/<P>$, where $P$ is a noncommutative polynomial 
of $x,y$ of some degree $d$ with square-free abelianization (i.e.  the corresponding commutative polynomial factors into distinct linear factors). In this case, $A_{ab}$ is the function algebra on a union of $d$ lines, so by Theorem \ref{JorOr}, $B_m$ and $N_m$ are finite dimensional for $m\ge 2$. This means that $B_m[r]=0$ and $N_m[r]=0$ for sufficiently large $r$. The following theorem gives a bound for how large 
$r$ should be. 

\begin{proposition}\label{vanish} (i) For $m\ge 2$ we have $B_m[r]=0$ if $r\ge 2d+2m-5$.  

(ii) For $m\ge 2$ we have $N_m[r]=0$ if $r\ge 2d+2m-4$. 
\end{proposition}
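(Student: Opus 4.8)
The plan is to reduce the statement for the quotient algebra $A=\Bbb C\langle x,y\rangle/\langle P\rangle$ to a statement about the free algebra $A_2$, where we have complete control via the Feigin--Shoikhet description $A_2/M_3\cong\Omega^{\rm even}_*$ and the degree bounds coming from the $W_2$-module structure. The key observation is that $P$ being a relation of degree $d$ forces certain elements to vanish, and one should track how the relation propagates through the lower central series filtration. More precisely, I would first note that $L_m(A)$ is the image of $L_m(A_2)$ under the projection $\pi\colon A_2\to A$, and similarly $M_m(A)=\pi(M_m(A_2))$, so $B_m(A)$ (resp. $N_m(A)$) is a quotient of $B_m(A_2)$ (resp. $N_m(A_2)$) by the image of the ideal generated by $P$. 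Thus $B_m(A)[r]$ is a quotient of the space spanned by the images in $B_m(A_2)[r]$ of elements of $L_m(A_2)$ that involve $P$; the task is to show that for $r$ large enough, \emph{every} element of $L_m(A_2)[r]$ is, modulo $L_{m+1}(A_2)$, congruent to something in the two-sided ideal $(P)$.

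The main technical step I would carry out is the following: work inside $A_2/M_3\cong\Omega^{\rm even}_*$ (legitimate because of Theorem~\ref{fsthm} and because, via Theorem~\ref{thmBapat1}/\ref{thmBapat2}, $B_m$ for $m\ge2$ is generated from $B_2$ by brackets with $A_{\le 2}$, each bracket raising degree by a controlled amount), and use the commutative geometry of $\Omega^{\rm even}$ as a module over $\Bbb C[x,y]$. Since $A_{\rm ab}=\Bbb C[x,y]/(\bar P)$ with $\bar P$ squarefree of degree $d$, the module $\Bbb C[x,y]/(\bar P)$ is the coordinate ring of $d$ lines, which vanishes in no degree — but the relevant modules are the tensor-field modules $\mathcal F_\lambda=V_\lambda\otimes\Bbb C[x,y]$ appearing as composition factors of $B_m$, cut down by $P$. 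The point is that $\mathcal F_\lambda/(\bar P)\mathcal F_\lambda$ lives in degrees $\le |\lambda|+d-1$ is false in general (it is infinite-dimensional); rather, one uses that $B_m(A)$ is genuinely \emph{finite}-dimensional by Theorem~\ref{JorOr}, and combines the degree bound $|\lambda|\le 2m-3$ for $n=2$ (from \cite{AJ}, used in Section 10) with the fact that passing to the quotient by $P$ kills the free $\Bbb C[x,y]$-direction after degree roughly $d$. Concretely: an element of $B_m(A_2)$ of degree $r$ that survives in $B_m(A)$ must, after unwinding the $\mathcal F_\lambda=V_\lambda\otimes\Bbb C[x,y]$ description, have its $\Bbb C[x,y]$-component of degree $r-|\lambda|\le r-$(something), and I would show that once $r-|\lambda|$ exceeds $d-1$ the component is forced into $\bar P\cdot\Bbb C[x,y]$, hence killed. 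With $|\lambda|\le 2m-3$ this gives $r\ge 2m-3+(d-1)=2d+2m-5$... wait, that gives $2m+d-4$; to land on $2d+2m-5$ one needs the sharper accounting that the relation $P$ of degree $d$, when brought into a commutator in $L_m$, already ``uses up'' degree, so the effective threshold on the polynomial part is $2d-2$ rather than $d-1$: an element of $(P)\cap L_m(A_2)$ of minimal degree has the form (word of degree $\ge d$) inside an $m$-fold commutator, so the bound on the complementary free part that must be nonzero for survival is $r-|\lambda|\le 2d-2$, i.e. $r\le |\lambda|+2d-2\le 2m-3+2d-2=2d+2m-5$.

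For part (ii), the module $N_m=AB_m$ in the free case satisfies $|\lambda|\le 2m-2$ (from \cite{Ke}, quoted in Section~10), and the identical argument with this shifted bound yields $N_m(A)[r]=0$ for $r\ge 2d+2m-4$. Alternatively one can deduce (ii) from (i) directly: $N_m=M_m/M_{m+1}$ and $M_m=AL_m$, so $N_m$ is spanned by products $a\cdot(\text{element of }B_m)$ with $\deg a\le$ (something), and the identity $a[c,b]=[ac,b]-[a,b]c$ (used in the proof of Proposition~\ref{finlength}) lets one express $N_m[r]$ in terms of $B_m$ in degrees $\le r$ and $B_m$-times-linear-terms, giving one extra degree and hence the shift from $2d+2m-5$ to $2d+2m-4$.

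\textbf{Main obstacle.} The delicate point is the degree bookkeeping in the reduction: carefully justifying that the only obstruction to $B_m(A_2)[r]$ dying in $B_m(A)[r]$ is the surviving ``polynomial coefficient'' of a tensor-field composition factor, and pinning down precisely why the relevant threshold on that coefficient's degree is $2d-2$ rather than $d-1$ — this requires understanding how an element of the two-sided ideal $(P)$, when it lies in $L_m$, interacts with the commutator structure, essentially the free-algebra analogue of Theorem~\ref{thmguptalevin}/\ref{Jenn} applied to the ideal generated by $P$. I expect this is where the real work lies; the rest is assembling known results from the excerpt (Theorems~\ref{fsthm}, \ref{JorOr}, \ref{thmBapat1}, and the $n=2$ bounds from Section~10).
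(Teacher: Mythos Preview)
Your proposal has a genuine gap in part (i), and the route you sketch is considerably more complicated than the paper's.

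The core problem is your base case. You try to descend from the $W_2$-module decomposition of $B_m(A_2)$ for the \emph{free} algebra, using the composition-factor bound $|\lambda|\le 2m-3$, and then argue that ``once the polynomial coefficient has degree $\ge 2d-2$ it lies in $(\bar P)$ and dies.'' But this is not justified: the kernel of $B_m(A_2)\twoheadrightarrow B_m(A)$ is \emph{not} $\bar P\cdot B_m(A_2)$ in any $\mathbb C[x,y]$-module sense (indeed $B_m(A_2)$ is not even a $\mathbb C[x,y]$-module compatibly with the $\mathcal F_\lambda$ filtration in the way you need). You yourself flag that $\mathcal F_\lambda/(\bar P)\mathcal F_\lambda$ is infinite-dimensional, and your patch---that the ``effective threshold'' is $2d-2$ rather than $d-1$---is asserted, not proved. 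The hand-waving about how ``an element of $(P)\cap L_m(A_2)$ of minimal degree has the form \dots'' does not establish the required vanishing.

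The paper sidesteps all of this. It works \emph{directly in the quotient algebra} $A$ throughout and never passes through $B_m(A_2)$. For the base case $m=2$ it simply cites \cite{KL}, Theorem~1, which computes $h_{B_2(A)}(t)=t^2(1+t+\cdots+t^{d-2})^2$ explicitly; this polynomial has degree $2d-2$, so $B_2(A)[r]=0$ for $r\ge 2d-1$. The induction step is then exactly the ingredient you mention parenthetically: Theorem~\ref{thmBapat2} gives $B_{m+1}=[x,B_m]+[y,B_m]+[xy,B_m]$, so each increment of $m$ raises the vanishing threshold by $2$, yielding $2d-1+2(m-2)=2d+2m-5$. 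That is the entire proof of (i).

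For (ii), your alternative route---deduce it from (i) using identities like $a[c,b]=[ac,b]-[a,b]c$ to write $N_m$ in terms of $B_m$ shifted by one degree---is essentially what the paper does: it invokes \cite{Ke}, Theorem~1.2, to get $N_m=xB_m+yB_m$, which immediately gives the shift by $1$. So your instinct there is correct; the issue is only the base case for (i).
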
 

\begin{proof} 
(i) By \cite{KL}, Theorem 1, the Hilbert series of $B_2$ is $h_{B_2}(t)=t^2(1+t+...+t^{d-2})^2$. 
The degree of this polynomial is $2d-2$, so we have $B_2[r]=0$ for $r\ge 2d-1$. Now, by Theorem 1.3(4) 
of \cite{AJ} (Theorem \ref{thmBapat2} for $m=2$), we have 
$$
B_{m+1}=[x,B_m]+[y,B_m]+[xy,B_m].
$$ 
This implies that if $B_m[r]=0$ for $r\ge s$ then $B_{m+1}[r]=0$ if $r\ge s+2$. 
Thus, arguing by induction starting from $m=2$, we get that $B_m[r]=0$ for $r\ge 2d-1+2(m-2)=2d+2m-5$. 

(ii) This follows from (i) and Theorem 1.2 of \cite{Ke}, which implies that $N_i=xB_i+yB_i$. 
\end{proof} 

Now consider the case when $P$ is ``Weil generic", i.e.  ``outside of a countable union of hypersurfaces" (in the space of noncommutative polynomials 
of degree $d$). In this case, it is clear from Chevalley's constructibility theorem that $\dim B_m[r]$ and $\dim N_m[r]$ are independent of $P$. 
Proposition \ref{vanish} says that the width of the interval of nonzero values for $\dim B_m[r]$ is at most $2d+m-5$ (these values may occur for $m\le r\le 2d+2m-6$). 
However, computer calculations for small $d$ show that this bound is not sharp, and the width does not actually increase with $m$ for fixed $d$; for
instance, it appears to be $\le 3$ for $d=3$ and $\le 5$ for $d=4$. For this reason, on the basis of computational evidence we make the following conjecture: 

\begin{conjecture} For Weil generic $P$ of degree $d$, we have $B_m[r]=N_m[r]=0$ for $r\ge 2d+m-3$. 
In other words, the width of the interval of nonzero values is $\le 2d-3$. 
\end{conjecture}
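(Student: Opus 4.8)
The plan is to induct on $m$, using the two-generator case of the Bapat--Jordan generation theorem (Theorem \ref{thmBapat2}, which for $m=2$ is Theorem 1.3(4) of \cite{AJ}): for all $m\ge 2$,
$$
B_{m+1}=[x,B_m]+[y,B_m]+[xy,B_m],\qquad N_{m+1}=xB_{m+1}+yB_{m+1}
$$
(the second identity by \cite{Ke}). The base case $m=2$ is supplied by \cite{KL}: $h_{B_2}(t)=t^2(1+t+\cdots+t^{d-2})^2$, which is concentrated in degrees $\le 2d-2=2d+2-4$, and $N_2$ is likewise concentrated in degrees $\le 2d-2$ --- the drop by one relative to the naive bound $\le 2d-1$ reflecting the Koszul-type cancellation in $N_2=xB_2+yB_2$ that, in the free case, makes $N_2(A_2)\cong B_2(A_2)$ as graded vector spaces.

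The engine of the induction step is a degree count. Since $P$ is homogeneous, $A$ is graded by word length, so the brackets $[x,-]$ and $[y,-]$ raise degree by $1$ while $[xy,-]$ raises it by $2$. Hence, assuming inductively that $B_m$ is concentrated in degrees $\le 2d+m-4$, the summands $[x,B_m]$ and $[y,B_m]$ lie in degrees $\le 2d+m-3$, while $[xy,B_m]$ reaches degree $2d+m-2$ only through the image of its top graded piece $[xy,\, B_m[2d+m-4]]$. Therefore the conjectured bound for $B_{m+1}$ --- concentration in degrees $\le 2d+(m+1)-4=2d+m-3$ --- is \emph{equivalent} to the single assertion
$$
[xy,\omega]\in L_{m+2}\quad\text{for every homogeneous }\omega\in L_m\text{ of degree }2d+m-4,
$$
i.e. the bracket $[xy,-]$ must kill the top graded component of $B_m$ inside $B_{m+1}$. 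The corresponding statement for $N_{m+1}$ would then follow from $N_{m+1}=xB_{m+1}+yB_{m+1}$ together with the analogue of the base-case cancellation, which is precisely why the conjecture can afford the same bound for $B$ and for $N$.

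To settle this reduced statement I would transplant the representation-theoretic machinery of the free case. Let $C=\Spec(A_{ab})$, a union of $d$ distinct lines through the origin of $\mathbb{A}^2$, and let $\mathrm{Vect}(C)=\mathrm{Der}(A_{ab})$ be the Lie algebra of polynomial vector fields tangent to $C$. One first establishes an explicit Feigin--Shoikhet-type description of $A/M_3$ for $A=A_2/\langle P\rangle$ and, as in Theorem \ref{wnact}, the fact that each $B_m$ and $N_m$ ($m\ge 2$) is a graded $\mathrm{Vect}(C)$-module; it has finite length by Theorem \ref{JorOr}, and one would identify its composition factors as tensor-field modules on $C$ supported (scheme-theoretically) at the origin. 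The quantitative core is then a curve analogue of the bound ``a factor $\mathcal F_\lambda$ of $B_m(A_n)$ has $|\lambda|\le 2m-3+2\lfloor (n-2)/2\rfloor$'': one wants every composition factor of $B_m$ to be generated in degree $\le 2d+m-4$, with the module structure forcing this top generation degree to increase by exactly $1$ (not $2$) on passing to $B_{m+1}$. Concretely this should fall out of Koszul syzygies of the shape $x\,dy\equiv y\,dx$ that control the top of these modules --- the same syzygies responsible for $\dim N_m[\mathrm{top}]=\dim B_m[\mathrm{top}]$ --- showing that on $B_m[2d+m-4]$ the bracket $[xy,-]$ factors through $[x,-]+[y,-]$ modulo $L_{m+2}$, hence is zero in $B_{m+1}$.

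The main obstacle is that this machinery --- the representation theory of $\mathrm{Vect}(C)$ for a \emph{singular} curve $C$, and the analogue of $A/M_3\cong\Omega^{\mathrm{even}}_\ast$ for $A_2/\langle P\rangle$ --- is not available in the literature, so much of the work lies in developing it or finding a way around it. A more hands-on alternative, and probably the first thing to attempt in practice, is to convert the generation theorem into an inclusion--exclusion for Hilbert series: express $h_{B_{m+1}}$ through $h_{B_m}$ by identifying the overlaps $[x,B_m]\cap[y,B_m]$, $[x,B_m]\cap[xy,B_m]$, and so on, and show they are large enough to cancel the would-be degree-$(2d+m-2)$ term. The base value $h_{B_2}(t)=t^2(1+\cdots+t^{d-2})^2$ suggests trying to prove by such a recursion that $h_{B_m}(t)=t^{m}Q_m(t)$ with $\deg Q_m\le 2d-4$, so that the support has width $\le 2d-3$ as conjectured. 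Throughout, the Weil-generic hypothesis is used only via Chevalley constructibility, ensuring that $\dim B_m[r]$ and $\dim N_m[r]$, hence their vanishing ranges, do not depend on $P$; so it suffices to treat one convenient generic choice of $P$.
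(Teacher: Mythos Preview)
The statement is a \emph{conjecture} in the paper, explicitly introduced as such on the basis of computational evidence for small $d$; the paper offers no proof. So there is nothing to compare your proposal against.

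Your write-up is a research plan, not a proof, and you are candid about this: the reduction to showing that $[xy,-]$ annihilates the top graded piece $B_m[2d+m-4]$ inside $B_{m+1}$ is correct and is exactly the content of the conjecture beyond what Proposition~\ref{vanish} already gives, but you do not establish it. The proposed route via representation theory of $\mathrm{Der}(A_{\mathrm{ab}})$ for the singular curve $C$ and a Feigin--Shoikhet description of $A/M_3$ for $A_2/\langle P\rangle$ is speculative; as you note, none of this machinery exists, and the paper gives no indication that it would go through. The alternative inclusion--exclusion approach is just a restatement of the problem. Note also that even your base case for $N_2$ is not established: from $N_2=xB_2+yB_2$ and $\deg h_{B_2}=2d-2$ one gets only $N_2[r]=0$ for $r\ge 2d$ (this is Proposition~\ref{vanish}(ii) for $m=2$), whereas the conjecture requires $N_2[2d-1]=0$; the ``Koszul-type cancellation'' you invoke is an assertion, not an argument. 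In short, the proposal correctly isolates what must be shown but does not make progress on it, which is consistent with the statement's status as an open conjecture.
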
 

For instance, for $d=3$ (the smallest nontrivial case), the prediction is that $B_m[r]=0$ for $r\ge m+3$. 

Now consider the structure of $B_m$ in more detail. Let $FL_2$ be the free Lie algebra in generators $x,y$. 
We have a natural Lie algebra homomorphism $\widetilde{\psi}: FL_2\to A$. 
Observe that $B_m[m]=L_m[m]=\widetilde{\psi}(FL_2[m])$. In degree $\ge 2$, 
the image of this homomorphism is contained in $[A,A]$. Thus, we have a homomorphism of Lie algebras
$\psi: FL_2^{\ge 2}\to [A,A]$, whose image in degree $m$ is $L_m[m]=B_m[m]$. 

Now, the Hilbert series of $FL_2^{\ge 2}$ and of $[A,A]$ can be computed explicitly. 
For the former, it is obtained in a standard way from the PBW theorem: 
$$
h_{FL_2^{\ge 2}}(t)=\sum_{i\ge 2}a_it^i,\text{ where }\prod_{i\ge 2}(1-t^i)^{a_i}=\frac{1-2t}{(1-t)^2}.
$$
For the latter, we have 
$$
h_{[A,A]}(t)=\sum_{i\ge 2}c_it^i=h_A(t)-h_{A/[A,A]}(t),
$$
and we have 
$$
h_A(t)=\frac{1}{1-2t+t^d}
$$
(see \cite{EG}, Theorem 3.2.4; the term $t^d$ accounts for the relation of degree $d$), while 
$$
h_{A/[A,A]}(t)=1+\sum_{i\ge 1}b_it^i,\text{ where }\prod_{i=1}^\infty (1-t^i)^{b_i}=\prod_{s=1}^\infty (1-2t^s+t^{ds})
$$
(\cite{EG}, Theorem 3.7.7). 
This implies that $\lim_{n\to \infty}a_n^{1/n}=2$, while 
$\lim_{n\to \infty}c_n^{1/n}=\delta^{-1}$, where $\delta$ is the smallest positive root of the equation 
$1-2t+t^d=0$ (clearly, $\delta^{-1}<2$). So we have that $c_n<a_n$ for large enough $n$, hence
$\psi$ is not injective starting from some degree. This gives rise to the following question.

\begin{question}\label{ques} 
Is $\psi$ surjective in some degree $m$? 
\end{question} 

Note that surjectivity of $\psi$ in two consecutive degrees implies strong consequences 
about the structure of the lower central series. Namely, we have the following proposition. 

\begin{proposition}\label{twocon} Let $A=A_n(\Bbb C)/I$, where $I$ is a homogeneous ideal. 
Let $q\ge 3$, and suppose that $B_2[m]=0$ for $m\ge q$, and for some $m\ge q+1$, 
the natural map $\psi: FL_n^{\ge 2}\to [A,A]$ is surjective in degrees $m$ and $m-1$.
Then $[A,A][\ell]=L_\ell[\ell]=B_\ell[\ell]$ and $B_s[\ell]=0$ for $2\le s\le \ell-1$ 
for all $\ell\ge m-1$.  
\end{proposition}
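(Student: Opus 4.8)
The plan is to notice first that the three assertions in the conclusion are really one statement, and then to prove it by a two‑step induction on the degree, the only genuine difficulty being the ``diagonal'' term. Since $L_{\ell+1}$ is concentrated in degrees $\geq \ell+1$, one has $B_\ell[\ell]=L_\ell[\ell]$ automatically, and $L_\ell[\ell]=\widetilde{\psi}(FL_n[\ell])$ is exactly the image of $\psi$ in degree $\ell$; since the chain $[A,A][\ell]=L_2[\ell]\supseteq L_3[\ell]\supseteq\cdots\supseteq L_\ell[\ell]$ has successive quotients $B_s[\ell]$ for $2\le s\le \ell-1$, the three conditions ``$[A,A][\ell]=L_\ell[\ell]$'', ``$B_s[\ell]=0$ for $2\le s\le \ell-1$'', and ``$\psi$ is surjective in degree $\ell$'' are equivalent; call this common property $P(\ell)$. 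So the proposition asserts that $P(m-1)$, $P(m)$, together with $B_2[r]=0$ for $r\ge q$, imply $P(\ell)$ for all $\ell\ge m-1$. The hypotheses supply $P(m-1)$ and $P(m)$ (and $q\ge 3$ forces $m\ge 4$), so I would induct on $\ell\ge m$, deducing $P(\ell+1)$, i.e.\ $B_s[\ell+1]=0$ for $2\le s\le \ell$, from $P(\ell-1)$ and $P(\ell)$.

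For $s=2$ this is immediate, as $\ell+1\ge m+1>q$. For $3\le s\le \ell-1$ I would invoke Theorem \ref{thmBapat2}, which applies verbatim to $A=A_n/I$: taking degree‑$(\ell+1)$ parts of $B_s=[A_{\le 2},B_{s-1}]$ gives $B_s[\ell+1]=[\overline{A_1},B_{s-1}[\ell]]+[\overline{A_2},B_{s-1}[\ell-1]]$, where $\overline{A_1},\overline{A_2}\subseteq B_1$ are the images of the spans of the $x_i$ and of the $x_ix_j$; the first summand vanishes by $P(\ell)$ and the second by $P(\ell-1)$. (This is the place where surjectivity of $\psi$ is needed in \emph{two} consecutive degrees, not one.)

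The case $s=\ell$ is the main obstacle, since Theorem \ref{thmBapat2} now only reduces $B_\ell[\ell+1]$ to $[\overline{A_1},B_{\ell-1}[\ell]]+[\overline{A_2},B_{\ell-1}[\ell-1]]$, and although $B_{\ell-1}[\ell]=0$ by $P(\ell)$, the term $B_{\ell-1}[\ell-1]=L_{\ell-1}[\ell-1]$ is generally nonzero. So I would show directly that $[A_2,L_{\ell-1}[\ell-1]]\subseteq L_{\ell+1}$ (equivalently $[\overline{A_2},B_{\ell-1}[\ell-1]]=0$ in $B_\ell$), by a finite Jacobi recursion. Write a typical generator of $L_{\ell-1}[\ell-1]=\widetilde{\psi}(FL_n[\ell-1])$ as $[x_i,c]$ with $c\in\widetilde{\psi}(FL_n[\ell-2])=L_{\ell-2}[\ell-2]$ (using $FL_n[\ell-1]=\sum_i[x_i,FL_n[\ell-2]]$). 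Then $[x_ax_b,[x_i,c]]=[[x_ax_b,x_i],c]+[x_i,[x_ax_b,c]]$, and the second summand lies in $[A,L_{\ell-1}[\ell]]=[A,L_\ell[\ell]]\subseteq L_{\ell+1}$ by $P(\ell)$. On the first summand I would iterate, peeling one more generator off the right‑hand factor at each step; after $\ell-2$ steps that factor has degree $1$ while the accumulated left‑hand factor is an iterated commutator lying in $L_{\ell-1}[\ell]$, hence in $L_\ell[\ell]$ by $P(\ell)$, so its bracket with the remaining generator is again in $L_{\ell+1}$; every term split off along the way has the form $[x_j,e']$ with $e'\in L_{\ell-1}[\ell]=L_\ell[\ell]$ and so also lies in $L_{\ell+1}$. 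Combined with $[A_1,L_{\ell-1}[\ell]]=[A_1,L_\ell[\ell]]\subseteq L_{\ell+1}$, this gives $L_\ell[\ell+1]\subseteq L_{\ell+1}[\ell+1]$, i.e.\ $B_\ell[\ell+1]=0$, completing the induction.

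A pitfall I would be careful to avoid: the naive attempt to handle $s=\ell$ by noting $[x_ax_b,x_i]\in L_3$ fails, because that needs $B_2[3]=0$, which holds only when $q=3$; the recursion above sidesteps this by never asking the short bracket $[x_ax_b,x_i]$ to lie deeper than $L_2$ until it has been commuted with enough further generators, at which point it is deep enough purely by $P(\ell)$. I expect this diagonal case to be the only nontrivial part of the argument; everything else is the bookkeeping of which instances of $P(\cdot)$ and which degree ranges are invoked, which should go through routinely once one checks $m\ge 4$.
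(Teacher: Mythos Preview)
Your proof is correct and follows essentially the same approach as the paper's. Both arguments reformulate the conclusion as a single property $P(\ell)$, set up a two-step induction, dispatch $s=2$ from the hypothesis on $B_2$, and handle $3\le s\le \ell-1$ via Theorem~\ref{thmBapat2} together with $P(\ell)$ and $P(\ell-1)$. For the diagonal case $s=\ell$, the paper invokes Lemma~\ref{multilin} once to move $x_ix_j$ into the innermost slot (so that the inner $(m-1)$-fold commutator lies in $L_{m-1}[m]=L_m[m]$ by $P(m)$), whereas you unroll this into an explicit Jacobi recursion, applying $P(\ell)$ at each peel; these are the same argument, yours being the hands-on version of the lemma the paper cites.
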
 

\begin{proof} 
Note that if $\psi$ is surjective in some degree $m\ge 3$ then $[A,A][m]=L_m[m]$, so 
in particular $B_i[m]=0$ for $i=2,...,m-1$, and we have $[z_{m-2},...,z_1,x_ix_j]\in L_m[m]$ if each
$z_p$ is one of the generators $x_i$. Hence, using \cite{BJ}, Corollary 1.5 (Theorem \ref{thmBapat2} above), 
we have 
$$
B_m[m+1]=\sum_i [x_i,B_{m-1}[m]]+\sum_{i,j} [x_ix_j,B_{m-1}[m-1]]=
$$
$$
=\sum_{i,j}[x_ix_j,B_{m-1}[m-1]],
$$
since  $B_{m-1}[m]=0$. Now, using Lemma \ref{multilin}, we can put $x_ix_j$ in the innermost slot, 
and get that $B_m[m+1]$ is spanned by linear combinations of $[z_{m-1},...,z_1,x_ix_j]$, where each $z_p$ 
is one of the generators $x_i$. 
But as explained above, $[z_{m-2},...,z_1,x_ix_j]\in L_m[m]$, so we get that $B_m[m+1]=0$. 

Now, if $\psi$ is also surjective in degree $m-1$, then we get $B_i[m-1]=0$ for $i=2,...,m-2$, 
so for any $3\le i\le m-1$, we have (using \cite{BJ}, Corollary 1.5 again): 
$$
B_s[m+1]=\sum_i [x_i,B_{s-1}[m]]+\sum_{i,j}[x_ix_j,B_{s-1}[m-1]]=0.
$$
So we get that if $m\ge q+1$ and $\psi$ is surjective in degrees $m$ and $m-1$ then 
it is also surjective in degree $m+1$ (and thus by induction in all degrees $\ell\ge m-1$). 
This implies the statement. 
\end{proof} 

We don't know, however, if the answer to Question \ref{ques} is positive, and in fact 
this hope is not supported by computational evidence. Let us consider the case $d=3$, and 
$A=A_2/<P>$, where $P$ is a Weyl generic element of degree $3$. In this case, the smallest 
degree where $\psi$ has a chance of being surjective (i.e.  the smallest $m$ for which 
$a_m\ge c_m$) is $m=16$. Namely, we have $a_{16}=4080$, while $c_{16}=4036$. 
Computation shows, however, that the map $\psi$ in degree $16$ is not surjective (even though surjectivity is possible dimensionwise): 
it has rank $4031$ and a $5$-dimensional cokernel. 
A similar pattern occurs in degrees $17,18,19$: while $a_n>c_n$ in these degrees, 
the map $\psi$ has a nonzero cokernel of dimensions $4,5,4$. Namely, we have: 
$$
a_{17}=7710,\ c_{17}=6552,\ {\rm rank}\psi[17]=6548;
$$
$$
a_{18}=14532,\ c_{18}=10615,\ {\rm rank}\psi[18]=10610;
$$
$$
a_{19}=27594,\ c_{19}=17216,\ {\rm rank}\psi[19]=17212.
$$
Thus, instead of eventually vanishing, the dimensions of 
the cokernels of $\psi$ seem to stabilize to the pattern $5,4,5,4,...$ 
We do not know an explanation for this phenomenon, and do not know if it continues beyond degree $19$. 

We note, however, that according to our computations, it appears that $\psi$ is eventually surjective (i.e.   
we are in the setting of Proposition \ref{twocon}) in the case when $A=A_2/<P,Q>$, where $P$ is Weil generic of degree $3$ 
and $Q$ is Weil generic of degree $8$. In this case, the smallest degree in which surjectivity of $\psi$ is possible dimensionwise is $15$, and a computer calculation shows that 
$\psi$ is indeed surjective in degrees $15$ and $16$ (of ranks $1974$, $3045$, respectively). Thus Proposition \ref{twocon} applies for degrees
$m\ge 15$. In lower degrees, the dimensions of $B_m[i]$ can be easily computed by a computer algebra system;
thus, one can get a complete list of dimensions of $B_m[i]$ in this case. 

\begin{remark} Note that $8$ is the smallest integer $n$ such that the series 
$(1-2t+t^3+t^n)^{-1}$ has positive coefficients, and therefore the algebra $A$ is infinite dimensional by the Golod-Shafarevich inequality, 
\cite{GS}.
\end{remark} 

\begin{remark}
These computations were done using a MAGMA program written by Eric Rains. 
It computes over a large finite field with randomly chosen relations of the given degrees. 
Thus, the computational results of this subsection should be viewed as conjectural.  
\end{remark}

\end{document}